\documentclass[a4paper,11pt]{amsart}
\title[A geometric interpretation of Krull dimensions of $\boldsymbol{T}$-algebras]{A geometric interpretation of Krull dimensions of $\boldsymbol{T}$-algebras}

\author{Song JuAe}
\address{Song JuAe, Department of Mathematics, Graduate School of Science, Kyoto University, Kitashirakawa Oiwake-cho, Sakyo-ku, Kyoto 606-8502, Japan.}
\email{song.juae.8m@kyoto-u.ac.jp}

\author{Yasuhito Nakajima}
\address{Yasuhito Nakajima, Independent Scholar}
\email{yasuhito.nakajima.mathematics@gmail.com}

\subjclass[2020]{Primary 14T10, 14T15, 14T20; Secondary 15A80}
\keywords{Krull dimensions, tropical rational function semifields, congruences, congruence varieties, tropical curves}

\usepackage{amsmath,amssymb,amscd}
\usepackage{amsthm,mathtools}
\usepackage{mathrsfs}
\usepackage{colonequals}

\newtheorem{dfn}{Definition}[section]
\newtheorem{thm}[dfn]{Theorem}
\newtheorem{prop}[dfn]{Proposition}
\newtheorem{cor}[dfn]{Corollary}
\newtheorem{lemma}[dfn]{Lemma}
\newtheorem{rem}[dfn]{Remark}
\newtheorem{ex}[dfn]{Example}

\def\Gamma{\varGamma}
\def\t{\,{}^t\!}

\begin{document}

\begin{abstract}
We investigate Krull dimensions of semirings and semifields dealt in tropical geometry.
For a congruence $C$ on a tropical Laurent polynomial semiring $\boldsymbol{T}[X_1^{\pm}, \ldots, X_n^{\pm}]$, a finite subset $T$ of $C$ is called a finite congruence tropical basis of $C$ if the congruence variety $\boldsymbol{V}(T)$ associated with $T$ coincides with $\boldsymbol{V}(C)$.
For $C$ proper, we prove that the Krull dimension of the quotient semiring $\boldsymbol{T}[X_1^{\pm}, \ldots, X_n^{\pm}] / C$ coincides with the maximum of the dimension of $\boldsymbol{V}(C)$ as a polyhedral complex plus one and that of $\boldsymbol{V}(C_{\boldsymbol{B}})$ when both $C$ and $C_{\boldsymbol{B}}$ have finite congruence tropical bases, respectively.
Here $C_{\boldsymbol{B}}$ is the congruence on $\boldsymbol{T}[X_1^{\pm}, \ldots, X_n^{\pm}]$ generated by $\{ (f_{\boldsymbol{B}}, g_{\boldsymbol{B}}) \,|\, (f, g) \in C \}$ and $f_{\boldsymbol{B}}$ is defined as the tropical Laurent polynomial obtained from $f$ by replacing the coefficients of all non $-\infty$ terms of $f$ with the real number zero.
With this fact, we also show that rational function semifields of tropical curves that do not consist of only one point have Krull dimension two.
\end{abstract}

\maketitle

\section{Introduction}
	\label{section1}

Tropical geometry is an algebraic geometry over the tropical semifield $\boldsymbol{T} \colonequals (\boldsymbol{R} \cup \{ -\infty \}, \operatorname{max}, +)$.
By an operation $\operatorname{trop}$ called tropicalization, a $d$-dimensional irreducible subvariety of an algebraic torus over a valuation field $K$ corresponds to the support of a balanced, pure, $d$-dimensional, $\boldsymbol{R}$-rational polyhedral complex (see \cite[Theorem~3.3.5]{Maclagan=Sturmfels} for more details).
In \cite{Giansiracusa=Giansiracusa2}, Giansiracusa--Giansiracusa introduced a concept of scheme-theoretic tropicalization, which sends an ideal $I$ in a Laurent polynomial ring $K[X_1^{\pm}, \ldots, X_n^{\pm}]$ over $K$ to a congruence $E(I)$ on the tropical Laurent polynomial semiring $\boldsymbol{T}[X_1^{\pm}, \ldots, X_n^{\pm}]$ over $\boldsymbol{T}$, and proved that the congruence variety $\boldsymbol{V}(E(I))$ associated with $E(I)$ equals the image of the algebraic set $\boldsymbol{V}(I)$ associated with $I$ by $\operatorname{trop}$.
For finitely generated congruences on tropical polynomial semirings, Bertram--Easton (\cite{Bertram=Easton}) and Jo\'{o}--Mincheva (\cite{Joo=Mincheva2}) proved a tropical analogue of Nullstellensatz.
As mentioned above, there is much research on tropical polynomial semirings, tropical Laurent polynomial semirings, and congruences on them.

In this paper, we determine the Krull dimension of the quotient semiring by a proper congruence $C$ on a tropical Laurent polynomial semiring when $C$ satisfies some mild conditions:

\begin{thm}[Theorem~\ref{thm:main1-2}]
    \label{thm:main1-1}
Let $C$ be a proper congruence on $\boldsymbol{T}[X_1^{\pm}, \ldots, X_n^{\pm}]$.
If both $C$ and $C_{\boldsymbol{B}}$ have finite congruence tropical bases, respectively, then the equality
\begin{align*}
    \operatorname{dim} \boldsymbol{T}[ X_1^{\pm}, \ldots, X_n^{\pm}] / C = \operatorname{max}\{ \operatorname{dim} \boldsymbol{V}(C) + 1, \operatorname{dim}\boldsymbol{V}(C_{\boldsymbol{B}}) \}
\end{align*}
holds.
\end{thm}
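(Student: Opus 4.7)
The plan is to establish the equality by proving both inequalities separately, exploiting the correspondence between prime congruences and the geometric data supplied by the finite congruence tropical bases. First I would fix finite congruence tropical bases $T \subseteq C$ and $T_{\boldsymbol{B}} \subseteq C_{\boldsymbol{B}}$, so that $\boldsymbol{V}(C) = \boldsymbol{V}(T)$ and $\boldsymbol{V}(C_{\boldsymbol{B}}) = \boldsymbol{V}(T_{\boldsymbol{B}})$ are honest polyhedral complexes that can be manipulated by finite combinatorial means. By definition, $\dim \boldsymbol{T}[X_1^{\pm},\ldots,X_n^{\pm}]/C$ is the supremum of lengths of chains of prime congruences on $\boldsymbol{T}[X_1^{\pm},\ldots,X_n^{\pm}]$ containing $C$. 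The two terms in the maximum should correspond to two qualitatively different classes of such primes: those arising from evaluation homomorphisms at finite points of $\boldsymbol{V}(C)$, contributing the $\dim \boldsymbol{V}(C) + 1$ term, and those obtained by passing to the Boolean coefficient-forgetting quotient, contributing the $\dim \boldsymbol{V}(C_{\boldsymbol{B}})$ term.

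For the lower bound, I would exhibit two explicit chains. Given a top-dimensional cell $\sigma \subseteq \boldsymbol{V}(C)$ with $d \colonequals \dim \boldsymbol{V}(C)$, I pick a descending flag of faces $\sigma = \sigma_d \supsetneq \sigma_{d-1} \supsetneq \cdots \supsetneq \sigma_0$. Each $\sigma_i$ should determine a prime congruence $P_{\sigma_i}$, namely the kernel of evaluation of tropical Laurent polynomials on the affine span of $\sigma_i$, and the basis $T$ provides witnesses that make $P_{\sigma_d} \subsetneq \cdots \subsetneq P_{\sigma_0}$ strict. The extra $+1$ is obtained by appending the maximal congruence that collapses everything to $-\infty$, which is strictly larger than $P_{\sigma_0}$ because $\boldsymbol{T}$ itself has Krull dimension $1$. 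An entirely analogous construction inside $\boldsymbol{V}(C_{\boldsymbol{B}})$, using the basis $T_{\boldsymbol{B}}$, produces a chain of primes containing $C_{\boldsymbol{B}}$ of length $\dim \boldsymbol{V}(C_{\boldsymbol{B}})$; pulling back along the homomorphism $f \mapsto f_{\boldsymbol{B}}$ lands it in the spectrum of $\boldsymbol{T}[X_1^{\pm},\ldots,X_n^{\pm}]/C$.

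For the upper bound, I would argue that every prime $P \supseteq C$ falls into one of the two families above. Using the finite basis, one tests whether $P$ identifies some nontrivial tropical polynomial with $-\infty$. If not, evaluation induces an injection of $\boldsymbol{T}[X_1^{\pm},\ldots,X_n^{\pm}]/P$ into $\boldsymbol{T}$-valued functions on a polyhedral subset of $\boldsymbol{V}(C)$, and the chain of primes above $P$ is bounded by that polyhedral dimension plus $1$, hence by $\dim \boldsymbol{V}(C) + 1$. If some polynomial is identified with $-\infty$, the prime factors through $\boldsymbol{T}[X_1^{\pm},\ldots,X_n^{\pm}]/C_{\boldsymbol{B}}$ and is thus bounded by $\dim \boldsymbol{V}(C_{\boldsymbol{B}})$; no $+1$ is gained here because the Boolean semifield $\boldsymbol{B}$ has Krull dimension $0$.

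I expect the main obstacle to lie in the upper bound, specifically in establishing the dichotomy \emph{every prime is finite-type or Boolean-type} and in ruling out hybrid primes that carry a genuine $\boldsymbol{R}$-valued evaluation on some coordinates while collapsing others to $-\infty$. This is where the finiteness of both tropical bases is essential, as it reduces the verification to finitely many polynomial identities and permits an application of a tropical Nullstellensatz in the style of Bertram--Easton \cite{Bertram=Easton} and Jo\'{o}--Mincheva \cite{Joo=Mincheva2}. A secondary subtlety is the strictness of the cell-flag chains, which should follow by selecting at each step $\sigma_i \supsetneq \sigma_{i-1}$ a tropical Laurent polynomial in the basis whose restrictions to $\sigma_i$ and $\sigma_{i-1}$ differ.
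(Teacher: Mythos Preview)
Your two-case architecture matches the paper's, but both the dichotomy criterion and the chain construction are wrong as stated.

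The test ``whether $P$ identifies some nontrivial tropical polynomial with $-\infty$'' is vacuous on the Laurent semiring: every nonzero $f$ has an invertible monomial term, so $(f,-\infty)\in P$ forces $(0,-\infty)\in P$ and $P$ improper (Remark~\ref{rem1}). No proper prime is ``Boolean-type'' by that test. The correct dichotomy, via the Jo\'o--Mincheva classification (Theorems~\ref{thm:prime1} and \ref{thm:prime2}), is whether the first column of the t-admissible matrix $U$ with $P=P(U)$ is zero. When it is, $P$ collapses all real constants, $(f,g)\in P\Leftrightarrow(f_{\boldsymbol{B}},g_{\boldsymbol{B}})\in P$, and hence $C\subset P\Leftrightarrow C_{\boldsymbol{B}}\subset P$; this is what yields the $\dim\boldsymbol{V}(C_{\boldsymbol{B}})$ term (Proposition~\ref{prop4}). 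When the first column is nonzero, a perturbation argument (Lemmas~\ref{lem:admissible1}--\ref{lem:admissible2}, Corollary~\ref{cor1}), applied to a single-element tropical basis of $C$, lets one assume it is $\boldsymbol{e}_1$; then the rows of $U$ pick out a point $\boldsymbol{w}\in\boldsymbol{V}(C)$ together with $\operatorname{rank}U-1$ independent tangent directions along which $f=g$ persists, forcing $\operatorname{rank}U\le\dim\boldsymbol{V}(C)+1$ (Proposition~\ref{prop3}). No Nullstellensatz is used.

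Your lower-bound chains also fail: the kernel of evaluation on the affine span of a face $\sigma_i$ with $\dim\sigma_i\ge 1$ has $\sigma_i$ inside its congruence variety, so by Proposition~\ref{prop1} it is \emph{not} prime (the quotient is not totally ordered). The paper instead takes a point $\boldsymbol{w}$ in a top-dimensional cell of $\boldsymbol{V}(C)$ and $d$ independent integer directions $\boldsymbol{u}_1,\ldots,\boldsymbol{u}_d$ tangent to that cell, forms the $(d+1)\times(n+1)$ matrix $U$ with rows $(1,\t\boldsymbol{w}),(0,\t\boldsymbol{u}_1),\ldots,(0,\t\boldsymbol{u}_d)$, and checks $C\subset P(U)$ directly (Proposition~\ref{prop2}); the chain $P(U)\subsetneq P(U(d-1))\subsetneq\cdots\subsetneq P(U(0))$ of length $d+1$ is then automatic from Theorem~\ref{thm:prime1}. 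The primes $P(U(i))$ are lexicographic monomial orders, not evaluation kernels, and their strict containments come for free from the rank of $U$ rather than from basis elements distinguishing faces.
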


Here a finite congruence tropical basis of $C$ is a finite subset of $C$ whose congruence variety coincides with $\boldsymbol{V}(C)$ and $C_{\boldsymbol{B}}$ is the congruence on $\boldsymbol{T}[X_1^{\pm}, \ldots, X_n^{\pm}]$ generated by $\{ (f_{\boldsymbol{B}}, g_{\boldsymbol{B}}) \,|\, (f, g) \in C \}$, where $f_{\boldsymbol{B}}$ is defined as the tropical Laurent polynomial obtained from $f$ by replacing the coefficients of all non $-\infty$ terms of $f$ with the real number zero.
For $C$ above, since $\boldsymbol{V}(C)$ is a finite union of $\boldsymbol{R}$-rational polyhedral sets, we can define its dimension $\operatorname{dim}\boldsymbol{V}(C)$ as the dimension of its polyhedral structure.
On the other hand, in \cite{Joo=Mincheva2}, Jo\'{o}--Mincheva defined prime congruences on additively idempotent semirings as an analogue of prime ideals and the Krull dimension $\operatorname{dim}S$ of an additively idempotent semiring $S$ as the maximum length of strict inclusions of prime congruences on $S$.
Also in the same paper, they proved that both the tropical polynomial semiring and the tropical Laurent polynomial semiring in $n$-variables have Krull dimension $n + 1$ (and with this fact, proved the Nullstellenstz mentioned above).
For a more general setting, the polynomial semiring and the Laurent polynomial semiring in $n$-variables over an additively idempotent semiring $A$ of a finite Krull dimension has Krull dimension $\operatorname{dim}A + n$ by \cite{Joo=Mincheva1}.
Mincheva in \cite{Mincheva} verified that the quotient semiring $\boldsymbol{T}[X_1^{\pm}, \ldots, X_n^{\pm}] / E(I)$ has the dimension of $\boldsymbol{V}(I)$ plus one as its Krull dimension.
Since not all the supports of $\boldsymbol{R}$-rational polyhedral complexes are given by tropicalization, Theorem~\ref{thm:main1-1} is a proper generalization of Mincheva's result.
Theorem~\ref{thm:main1-1} is a tropical analogue of the classical fact that the dimension of an affine algebraic set is equal to the Krull dimension of its affine coordinate ring over an algebraically closed field (\cite[Proposition~1.7 in Chapter~I]{Hartshorne}).

We also investigate algebraic properties of rational function semifields $\operatorname{Rat}(\Gamma)$ of (abstract) tropical curves $\Gamma$ and with Theorem~\ref{thm:main1-1}, give a tropical analogue of the fact that the function field of a Riemann surface has transcendental degree one:

\begin{thm}[Theorem~\ref{thm:main2-2}]
    \label{thm:main2-1}
Let $\Gamma$ be a tropical curve which is not one point.
Then the equality
\begin{align*}
    \operatorname{dim}\operatorname{Rat}(\Gamma) = 2
\end{align*}
holds.
\end{thm}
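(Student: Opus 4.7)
The plan is to bound $\dim \operatorname{Rat}(\Gamma)$ on both sides: a lower bound via an explicit chain of prime congruences, and an upper bound via Theorem~\ref{thm:main1-1} applied to a finitely generated sub-semiring of $\operatorname{Rat}(\Gamma)$, together with a localization argument.

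For $\dim \operatorname{Rat}(\Gamma) \geq 2$, I would fix a point $p \in \Gamma$ and form the evaluation homomorphism $\operatorname{ev}_p \colon \operatorname{Rat}(\Gamma) \to \boldsymbol{T}$; this is surjective since the constants realize $\boldsymbol{T}$ as a sub-semifield of $\operatorname{Rat}(\Gamma)$. The kernel congruence $P_1$ of $\operatorname{ev}_p$ gives quotient $\boldsymbol{T}$, so $P_1$ is a prime congruence. Because $\Gamma$ is not a single point, some non-constant $f \in \operatorname{Rat}(\Gamma)$ satisfies $f(p) = 0$ (take, for instance, a suitable piecewise-linear hat-type function based at $p$), and hence $P_1$ strictly contains the diagonal congruence. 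The congruence $P_2$ obtained by identifying every pair of non-$-\infty$ values, with quotient $\boldsymbol{B}$, is also prime and strictly contains $P_1$ since $\boldsymbol{T} \not\cong \boldsymbol{B}$. The chain so obtained yields $\dim \operatorname{Rat}(\Gamma) \geq 2$.

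For $\dim \operatorname{Rat}(\Gamma) \leq 2$, I would choose finitely many rational functions $f_1, \ldots, f_n \in \operatorname{Rat}(\Gamma)$ that generate $\operatorname{Rat}(\Gamma)$ as a semifield over $\boldsymbol{T}$; this is possible because a tropical curve is a finite metric graph, so on each edge a single coordinate function (together with constants) is enough to build every piecewise-linear integer-slope function using $\max$, $+$, and tropical division. The map $X_i \mapsto f_i$ defines a $\boldsymbol{T}$-algebra homomorphism $\phi \colon \boldsymbol{T}[X_1^{\pm}, \ldots, X_n^{\pm}] \to \operatorname{Rat}(\Gamma)$ whose image is $A \cong \boldsymbol{T}[X_1^{\pm}, \ldots, X_n^{\pm}] / C$ for $C \colonequals \ker \phi$. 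The congruence variety $\boldsymbol{V}(C)$ coincides with the tropical image of $\Gamma$ in $\boldsymbol{R}^n$ and is therefore $1$-dimensional. Provided the $f_i$ are chosen so that both $C$ and $C_{\boldsymbol{B}}$ admit finite congruence tropical bases and so that $\dim \boldsymbol{V}(C_{\boldsymbol{B}}) \leq 2$, Theorem~\ref{thm:main1-1} gives $\dim A = \max(1+1, \dim \boldsymbol{V}(C_{\boldsymbol{B}})) = 2$. Since $\operatorname{Rat}(\Gamma)$ is the semifield of fractions of $A$, a localization argument (prime congruences of $\operatorname{Frac}(A)$ pull back to prime congruences of $A$ and strict inclusions are preserved) yields $\dim \operatorname{Rat}(\Gamma) \leq \dim A = 2$.

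The main obstacle is the upper bound: verifying the finite-tropical-basis hypothesis for the concrete $C$ and $C_{\boldsymbol{B}}$ arising from the embedding, and securing the bound $\dim \boldsymbol{V}(C_{\boldsymbol{B}}) \leq 2$. Both should follow from a combinatorially motivated choice of the $f_i$ (for example, coordinate-like functions naturally associated to edges and vertices of $\Gamma$), but the Boolean-ization can in principle produce a higher-dimensional configuration, so some care is needed. A secondary point is the localization inequality $\dim \operatorname{Frac}(A) \leq \dim A$, which is the tropical counterpart of the classical correspondence between primes of a localization and primes disjoint from the multiplicative set, reformulated at the level of congruences on semirings.
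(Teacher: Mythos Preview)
Your overall strategy matches the paper's: realize $\operatorname{Rat}(\Gamma)$ as the fraction semifield of a quotient $\boldsymbol{T}[X_1^{\pm},\ldots,X_n^{\pm}]/C$, apply Theorem~\ref{thm:main1-1} to compute $\dim(\boldsymbol{T}[X_1^{\pm},\ldots,X_n^{\pm}]/C)$, and transfer this to the fraction semifield. Your separate lower-bound argument via an explicit chain $\Delta \subsetneq P_1 \subsetneq P_2$ is correct and pleasant, though the paper does not need it since it obtains the exact value directly.

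The genuine gap is exactly the one you flag: controlling $\dim \boldsymbol{V}(C_{\boldsymbol{B}})$. You hope this ``should follow from a combinatorially motivated choice of the $f_i$'', but this is the heart of the matter, and a generic choice will not do. The paper's resolution is to take $C = \boldsymbol{E}_{\pm}(V)$ for $V$ the image of $\Gamma$ (a finite union of nonempty $\boldsymbol{R}$-rational polyhedral sets), and to prove as a separate lemma (Lemma~\ref{lem6}) that for such congruences one always has $\boldsymbol{V}(\boldsymbol{E}_{\pm}(V)_{\boldsymbol{B}}) = \operatorname{rec}(V)$, the recession fan of $V$. Since $\dim \operatorname{rec}(V) \le \dim V$, this immediately gives $\dim \boldsymbol{V}(C_{\boldsymbol{B}}) \le 1$ and the maximum in Theorem~\ref{thm:main1-1} collapses to $\dim V + 1 = 2$. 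The proof of that lemma is constructive: one writes down an explicit single generator $(f,g)$ of $\boldsymbol{E}_{\pm}(V)$ built from the defining inequalities of the polyhedral pieces of $V$, so that passing to $(f_{\boldsymbol{B}}, g_{\boldsymbol{B}})$ visibly replaces each $\boldsymbol{b}_i$ by $\boldsymbol{0}$ and hence cuts out exactly the union of recession cones. This simultaneously supplies the finite congruence tropical bases for both $C$ and $C_{\boldsymbol{B}}$ that you were worried about. Without this step (or an equivalent one), your upper bound is not established: as the paper's own Example~\ref{ex3} shows, Boolean-ization \emph{can} raise the dimension of the congruence variety for arbitrary finite $T$, so the concern you raise is real and must be argued away.

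For the localization step, the paper invokes \cite[Proposition~3.9]{Joo=Mincheva1}, which gives precisely $\dim Q(A) = \dim A$ for a cancellative $\boldsymbol{B}$-algebra $A$ in which $(h,-\infty)\in C$ forces $h=-\infty$; you should cite this rather than leave it as a ``secondary point''.
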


A tropical curve is the extended distance space obtained from a finite connected graph by identifying each edge with a closed segment of $\boldsymbol{T}$ (see Subsection~\ref{subsection2.8} for more details).
As an analogue of linear systems on algebraic curves, the concept of linear systems on tropical curves was introduced and studied by several researchers (cf.~\cite{Haase=Musiker=Yu}).
A rational function (other than $-\infty$) on a tropical curve $\Gamma$ is a continuous integral affine function with a finite number of pieces and the set $\operatorname{Rat}(\Gamma)$ of all rational functions on $\Gamma$ forms a semifield with natural operations and called the rational function semifield of $\Gamma$.
In \cite{JuAe3}, Song proved that $\operatorname{Rat}(\Gamma)$ is finitely generated as a semifield over $\boldsymbol{T}$.
Hence there exists a sujective $\boldsymbol{T}$-algebra homomorphism $\psi$ from a tropical rational function semifield $\overline{\boldsymbol{T}(X_1, \ldots, X_n)}$ to $\operatorname{Rat}(\Gamma)$ (\cite[Lemma~3.11]{JuAe6}), and thus $\operatorname{Rat}(\Gamma)$ is isomorphic to the quotient semifield $\overline{\boldsymbol{T}(X_1, \ldots, X_n)} / \operatorname{Ker}(\psi)$ by the kernel congruence $\operatorname{Ker}(\psi)$ of $\psi$.
As in the tropical Laurent polynomial semiring case, we can define congruence varieties associated with congruences on tropical rational function semifields.
By \cite[Proposition~3.12]{JuAe6}, (the natural compactification of) the congruence variety associated with $\operatorname{Ker}(\psi)$ (to be regarded as a tropical curve by lattice length) is isomorphic to $\Gamma$.
More generally, for a finitely generated semifield $S$ over $\boldsymbol{T}$, since we do not fix a finite generating set of $S$, we can obtain distinct congruence varieties from $S$.
Song also verified that these congruence varieties correspond to each other with invertible maps defined by finite numbers of tropical rational functions (\cite[Corollary~3.20]{JuAe6}).
In addition, by \cite[Corollary~3.19]{JuAe7}, Song characterized rational function semifields of tropical curves with surjective $\boldsymbol{T}$-algebra homomorphisms from tropical rational function semifields and geometric properties (i.e., dimensions as supports of polyhedral complexes, connectivity, the existence of distinct parallel rays) of congruence varieties associated with the kernel congruences.

In the current paper, we directly extract these geometric properties without passing through congruence varieties.
Theorem~\ref{thm:main2-1} is one of them, which corresponds to the dimension property above.
Connectivity is described in terms of pseudodirect product, i.e., in the setting in \cite[Corollary~3.19]{JuAe7}, a finitely generated semifield $S$ over $\boldsymbol{T}$ defines (an isomorphism class of) a connected congruence variety if and only if there exist no two finitely generated semifields $S_1, S_2$ over $\boldsymbol{T}$ whose pseudodirect product $S_1 \bowtie S_2$ is isomorphic to $S$.
It is a tropical analogue of the classical fact that the affine scheme of a (unital commutative) ring $A$ is connected if and only if there exist no two nonzero (unital commutative) rings $A_1, A_2$ whose direct product $A_1 \times A_2$ is isomorphic to $A$ (\cite[Exercise~2.19 in Chapter~II]{Hartshorne}).
The existence of distinct parallel rays (of a fixed $\boldsymbol{R}$-rational polyhedral structure) is translated into the terms of numbers of unbounded cells of ($\boldsymbol{R}$-rational polyhedral structures of) congruence varieties and their recession fans, which depend on only the $\boldsymbol{T}$-algebra structure of $S$.

The rest of this paper is organized as follows.
In Section~\ref{section2}, we prepare several concepts that we need such as polyhedral sets, congruences on semirings and Krull dimensions of additively idempotent semirings, tropicalizations, and tropical curves and rational functions on them.
Section~\ref{section3} is our main section, and we prove all assertions above.

\section*{Acknowledgements}
The authors thank their supervisor Masanori Kobayashi, Takaaki Ito and Yuki Tsutsui for their helpful comments and discussions.

\section{Preliminaries}
	\label{section2}

In this section, we recall several definitions that we need later.
We refer to \cite{Golan} (resp.~\cite{Maclagan=Sturmfels}) for an introduction to the theory of semirings (resp.~tropical geometry) and employ definitions in \cite{Maclagan=Sturmfels} and \cite{Mikhalkin=Rau} (resp.~\cite{Bertram=Easton} and \cite{Joo=Mincheva2}, \cite{JuAe3}) related to polyhedral geometry (resp.~semirings, tropical curves) with slight modifications if we need.

In this paper, the font $\boldsymbol{a}$ (resp.~$\vec{a}$) is used to stand for a column (resp.~row) vector and $\t A$ denotes the transpose of a matrix $A$.

\subsection{Polyhedral sets}
	\label{subsection2.1}

A \textit{polyhedral set} in $\boldsymbol{R}^n$ is the solution set of a system of a finite number of linear inequalities.
The polyhedral set $\{ \boldsymbol{x} \in \boldsymbol{R}^n \,|\, A \boldsymbol{x} \ge \boldsymbol{0} \}$ is called the \textit{recession cone} of the polyhedral set $P \colonequals \{ \boldsymbol{x} \in \boldsymbol{R}^n \,|\, A \boldsymbol{x} \ge \boldsymbol{b} \}$ defined by an $l \times n$-matrix $A$ and a vector $\boldsymbol{b} \in \boldsymbol{R}^l$ when $P$ is nonempty.
Here $\boldsymbol{0}$ stands for the zero vector.
A \textit{polyhedral complex} $X$ is a complex consisting of a finite number of polyhedral sets.
Each poluhedral set in $X$ is a \textit{cell}.
Its \textit{support} $|X|$ is the union of its polyhedral sets.
The \textit{recession fan} of $X$ is the union of all recession cones of polyhedral sets in $X$.
By definition, we can say that it is also the \textit{recession fan} $\operatorname{rec}(|X|)$ of $|X|$.
A finite union of polyhedral sets in $\boldsymbol{R}^n$ is the support of a polyhedral complex in $\boldsymbol{R}^n$, and vice versa (cf.~\cite[Proposition~4.1.1(a)]{Mikhalkin=Rau}).

For a nonempty polyhedral set $P$ in $\boldsymbol{R}^n$, its \textit{dimension} $\operatorname{dim}P$ is defined by the dimension of the smallest affine subspace of $\boldsymbol{R}^n$ containing $P$.
If $P$ is empty, we define that it has dimension $-\infty$.
The \textit{dimension} $\operatorname{dim}X$ of a polyhedral complex $X$ consisting of polyhedral sets $P_1, \ldots, P_m$ is defined as $\operatorname{max}\{ \operatorname{dim}P_i \,|\, i = 1, \ldots, m\}$.
We say that $|X|$ has dimension $\operatorname{dim}X$, i.e., $\operatorname{dim}(|X|) \colonequals \operatorname{dim}X$.
Since $X$ consists of only a finite number of polyhedral sets, $\operatorname{dim}(|X|)$ is independent of the choice of its \textit{polyhedral structures}, i.e., polyhedral complexes with $|X|$ as their supports.
Clearly $\operatorname{dim}X \ge \operatorname{dim}(\operatorname{rec}(|X|))$ holds.

Let $G$ be a subgroup of the abelian group $(\boldsymbol{R}, +)$.
A polyhedral set $P$ in $\boldsymbol{R}^n$ is \textit{$G$-rational} if $P$ is of the form $\{ \boldsymbol{x} \in \boldsymbol{R}^n \,|\, A\boldsymbol{x} \ge \boldsymbol{b} \}$ with some $l \times n$-matrix $A$ with entries in $\boldsymbol{Q}$ and some vector $\boldsymbol{b} \in G^l$.
A polyhedral complex is \textit{$G$-rational} if it consists only of $G$-rational polyhedral sets.

\subsection{Semirings, algebras and semifields}
	\label{subsection2.2}

In this paper, a \textit{semiring} $S$ is a commutative semiring with the absorbing neutral element $0_S$ for addition $+$ and the identity $1_S$ for multiplication $\cdot$, i.e., a nonempty set $S$ with two binary operations $+$ and $\cdot$ satisfying the following four conditions:

$(1)$ the pair $(S, +)$ is a commutative monoid with an identity $0_S$,

$(2)$ the pair $(S, \cdot)$ is a commutative monoid with an identity $1_S$,

$(3)$ the triple $(S, +, \cdot)$ has the distributive property, i.e., $(a + b) \cdot c = a \cdot c + b \cdot c$ holds for any $a,b, c \in S$, and

$(4)$ the zero element $0_S$ is absorbing, i.e., $0_S \cdot a = 0_S$ holds for any $a \in S$.

If every nonzero element of a semiring $S$ is multiplicatively invertible and $0_S \not= 1_S$, then $S$ is called a \textit{semifield}.

The set $\boldsymbol{T} \colonequals \boldsymbol{R} \cup \{ -\infty \}$ with two tropical operations:
\begin{align*}
a \oplus b \colonequals \operatorname{max}\{ a, b \} \quad	\text{and} \quad a \odot b \colonequals a + b,
\end{align*}
where $a, b \in \boldsymbol{T}$ and $a + b$ stands for the usual sum of $a$ and $b$, becomes a semifield.
Here, for any $a \in \boldsymbol{T}$, we handle $-\infty$ as follows:
\begin{align*}
a \oplus (-\infty) = (-\infty) \oplus a = a \quad \text{and} \quad a \odot (-\infty) = (-\infty) \odot a = -\infty.
\end{align*}
This triple $(\boldsymbol{T}, \odot, \oplus)$ is called the \textit{tropical semifield}.
The subset $\boldsymbol{B} \colonequals \{ 0, -\infty \}$ of $\boldsymbol{T}$ becomes a semifield with tropical operations of $\boldsymbol{T}$ and is called the \textit{boolean semifield}.

The \textit{tropical polynomials} are defined in the usual way and the set of all tropical polynomials in $n$-variables is denoted by $\boldsymbol{T}[X_1, \ldots, X_n]$.
It becomes a semiring with two tropical operations and is called the \textit{tropical polynomial semiring}.
Similarly, we define the \textit{tropical Laurent polynomials} and the \textit{tropical Laurent polynomial semiring} $\boldsymbol{T}[X_1^{\pm}, \ldots, X_n^{\pm}]$.
Throughout this paper, our usage of the symbol $c \odot \boldsymbol{X}^{\odot \boldsymbol{m}}$ with $c \in \boldsymbol{T}$ and $\boldsymbol{m} = \t (m_1, \ldots, m_n) \in \boldsymbol{Z}^n$ means the tropical monomial $c \odot X_1^{\odot m_1} \odot \cdots \odot X_n^{\odot m_n}$.

A map $\varphi \colon S_1 \to S_2$ between semirings is a \textit{semiring homomorphism} if for any $a, b \in S_1$,
\begin{align*}
\varphi(a + b) = \varphi(a) + \varphi(b), \	\varphi(a \cdot b) = \varphi(a) \cdot \varphi(b), \	\varphi(0) = 0, \	\text{and}\	\varphi(1) = 1.
\end{align*}
When $\varphi$ is a bijective semiring homomorphism, it is a \textit{semiring isomorphism}.
Then $S_1$ is \textit{isomorphic to} $S_2$ as a semiring. 
Given a semiring homomorphism $\varphi \colon S_1 \to S_2$, we call the pair $(S_2, \varphi)$ (for short, $S_2$) a \textit{$S_1$-algebra}.
In particular, if both $S_1$ and $S_2$ are semifields and $\varphi$ is injective, then $S_2$ is a \textit{semifield over $S_1$}.
Then $S_1$ is frequently identified with its image by $\varphi$ in $S_2$.
For a semiring $S_1$, a map $\psi \colon (S_2, \varphi) \to (S_2^{\prime}, \varphi^{\prime})$ between $S_1$-algebras is a \textit{$S_1$-algebra homomorphism} if $\psi$ is a semiring homomorphism and $\varphi^{\prime} = \psi \circ \varphi$.
When there is no confusion, we write $\psi \colon S_2 \to S_2^{\prime}$ simply.
A bijective $S_1$-algebra homomorphism $S_2 \to S_2^{\prime}$ is a \textit{$S_1$-algebra isomorphism}.
Then $S_2$ and $S_2^{\prime}$ are said to be \textit{isomorphic} and denoted by $S_2 \cong S_2^{\prime}$.

A semiring $S$ is a $\boldsymbol{B}$-algebra if and only if it is \textit{additively idempotent}, i.e., $a + a = a$ holds for any $a \in S$.
Both $\boldsymbol{T}$ and $\boldsymbol{T}[X_1^{\pm}, \ldots, X_n^{\pm}]$ are $\boldsymbol{B}$-algebras.
When $S$ is additively idempotent, we can define a partial order as $a \ge b$ if and only if $a + b = a$.
If this partial order is a total order, then $S$ is said to be \textit{totally ordered}.
A $\boldsymbol{B}$-algebra $S$ is said to be \textit{cancellative} if whenever $a \cdot b = a \cdot c$ for some $a, b, c \in S$, then either $a = 0_S$ or $b = c$.
If $S$ is cancellative, then we can define the semifield $Q(S)$ of fractions as within the case of integral domains.
In this case, the map $S \to Q(S); x \mapsto x/1_S$ becomes an injective $\boldsymbol{B}$-algebra homomorphism.

\subsection{Congruences}
	\label{subsection2.3}

A \textit{congruence} $E$ on a semiring $S$ is a subset of $S^2 = S \times S$ satisfying

$(1)$ $(a, a) \in E$ holds for any $a \in S$,

$(2)$ if $(a, b) \in E$, then $(b, a) \in E$,

$(3)$ if $(a, b) \in E$ and $(b, c) \in E$, then $(a, c) \in E$,

$(4)$ if $(a, b) \in E$ and $(c, d) \in E$, then $(a + c, b + d) \in E$, and

$(5)$ if $(a, b) \in E$ and $(c, d) \in E$, then $(a \cdot c, b \cdot d) \in E$.

The diagonal set of $S^2$ is denoted by $\Delta$ and called the \textit{trivial} congruence on $S$.
It is the unique smallest congruence on $S$.
The set $S^2$ is a congruence on $S$ called the \textit{improper} congruence on $S$.
Congruences other than the improper congruence are said to be \textit{proper}.
Quotients by congruences can be considered in the usual sense and the quotient semiring of $S$ by the congruence $E$ is denoted by $S / E$.
Then the natural surjection $\pi_E \colon S \twoheadrightarrow S / E$ is a semiring homomorphism.

The intersection of (possibly infinitely many) congruences is again a congruence.
For a subset $T$ of $S^2$, let $\langle T \rangle_S = \langle T \rangle$ be the smallest congruence on $S$ containing $T$, i.e., the intersection of all congruences on $S$ containing $T$.
A congruence $E$ on $S$ is \textit{finitely generated} if there exist a finite number of element $(a_1, b_1), \ldots, (a_n, b_n) \in E$ such that $\langle \{ (a_1, b_1), \ldots, (a_n, b_n) \} \rangle = E$.

For a semiring homomorphism $\psi \colon S_1 \to S_2$, the \textit{kernel congruence} $\operatorname{Ker}(\psi)$ of $\psi$ is the congruence $\{ (a, b) \in S_1^2 \,|\, \psi(a) = \psi(b) \}$ on $S_1$.
For semirings and congruences on them, the fundamental homomorphism theorem holds (\cite[Proposition 2.4.4]{Giansiracusa=Giansiracusa2}).
Then, for the above $\pi_E$, we have $\operatorname{Ker}(\pi_E) = E$.

For two congruences $E, F$ on a semiring $S$ such that $E \subset F$, the image $F / E$ of $F$ in $S / E \times S / E$ is a congruence on $S / E$ and $(S / E) / (F / E)$ is isomorphic to $S / E$ as a semiring (\cite[Proposition~2.4.4(3)]{Giansiracusa=Giansiracusa2}).
This yields a bijection between congruences on $S / E$ and congruences on $S$ containing $E$.

\begin{lemma}
    \label{lem:finite}
Let $S$ be a semiring.
For a subset $E$ of $S^2$, the equality
\begin{align*}
\langle E \rangle = \bigcup_{F \subset E :\,\#F < \infty} \langle F \rangle
\end{align*}
holds.
\end{lemma}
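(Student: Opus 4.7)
The plan is to prove both inclusions, with the forward direction $\subseteq$ being the substantive one. The reverse inclusion $\supseteq$ is immediate from monotonicity of $\langle \cdot \rangle$: for every finite $F \subseteq E$ we have $\langle F \rangle \subseteq \langle E \rangle$, so taking the union preserves the inclusion.

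For the forward inclusion, I would show that the right-hand side
\[
R \colonequals \bigcup_{F \subseteq E :\, \#F < \infty} \langle F \rangle
\]
is itself a congruence on $S$ containing $E$. Since $\langle E \rangle$ is by definition the smallest congruence on $S$ containing $E$, this forces $\langle E \rangle \subseteq R$. Containment of $E$ is clear since $(a,b) \in E$ lies in $\langle \{(a,b)\} \rangle \subseteq R$.

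The verification that $R$ is a congruence is the heart of the argument, and the key device in every single case is that the union of two finite subsets of $E$ is again finite. Concretely: for reflexivity, $(a,a) \in \langle \emptyset \rangle \subseteq R$ (or use any singleton). For symmetry, if $(a,b) \in \langle F \rangle \subseteq R$, then $(b,a) \in \langle F \rangle \subseteq R$ since $\langle F \rangle$ is a congruence. For transitivity, given $(a,b) \in \langle F_1 \rangle$ and $(b,c) \in \langle F_2 \rangle$, set $F \colonequals F_1 \cup F_2$, which is finite and contained in $E$; then both pairs lie in $\langle F \rangle$ by monotonicity, so $(a,c) \in \langle F \rangle \subseteq R$. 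The additive and multiplicative compatibility axioms are verified in exactly the same way, again by merging the two relevant finite generating sets into a single finite subset of $E$ and using that $\langle F \rangle$ already satisfies those axioms.

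I do not anticipate a real obstacle: the proof is the standard observation that the congruence generated by a set admits a finitary closure description, and the only point requiring any care is to package the five congruence axioms uniformly via the $F_1 \cup F_2$ trick rather than trying to describe $\langle E \rangle$ element-by-element through a transfinite/inductive construction. I would therefore keep the write-up short and direct.
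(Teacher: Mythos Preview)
Your proposal is correct and follows essentially the same approach as the paper: both arguments obtain $\supseteq$ by monotonicity and $\subseteq$ by showing that the right-hand side is a congruence containing $E$. The only difference is that you spell out the verification of the congruence axioms via the $F_1 \cup F_2$ trick, whereas the paper simply asserts that this check is easy.
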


\begin{proof}
For any finite subset $F$ of $E$, clearly $\langle E \rangle \supset \langle F \rangle$ holds, and thus $\langle E \rangle \supset \bigcup_{F \subset E : \#F < \infty} \langle F \rangle$ holds.

Clearly $E \subset \bigcup_{F \subset E : \#F < \infty} \langle F \rangle$, hence $\langle E \rangle \subset \left\langle \bigcup_{F \subset E : \#F < \infty} \langle F \rangle \right\rangle$.
Since it is easy to check that $\bigcup_{F \subset E : \#F < \infty} \langle F \rangle$ is a congruence on $S$, the equality $\left\langle \bigcup_{F \subset E : \#F < \infty} \langle F \rangle \right\rangle = \bigcup_{F \subset E : \#F < \infty} \langle F \rangle$ holds.
\end{proof}

\begin{rem}
	\label{rem1}
\upshape{
Let $C$ be a congruence on $\boldsymbol{T}[X_1^{\pm}, \ldots, X_n^{\pm}]$.
Then since $f \oplus m = f$ and $m \oplus (-\infty) = m$ for any term $m$ of $f$, by \cite[Proposition~2.2(iii)]{Joo=Mincheva2}, we have $(f, m), (m, -\infty) \in C$ if $(f, -\infty) \in C$ with $f \not= -\infty$.
As $m$ is multiplicatively invertible in $\boldsymbol{T}[X_1^{\pm}, \ldots, X_n^{\pm}]$, this implies $(0, -\infty) \in C$, and hence $C$ is improper (cf.~\cite[the proof of Lemma~3.3]{JuAe7}).
Conversely, if $C$ is improper, cleary $(f, -\infty) \in C$ does not imply $f = -\infty$.
Thus $C$ is proper if and only if $(f, -\infty) \in C$ implies $f = -\infty$.
}
\end{rem}

\subsection{Prime congruences and Krull dimensions}
	\label{subsection2.4}

Let $S$ be a semiring.
For two pairs $\alpha = (\alpha_1, \alpha_2), \beta = (\beta_1, \beta_2) \in S^2$, we define
\begin{align*}
\alpha \rtimes \beta \colonequals (\alpha_1 \beta_1 + \alpha_2 \beta_2, \alpha_1 \beta_2 + \alpha_2 \beta_1).
\end{align*}
A congruence $E$ on $S$ is a \textit{prime congruence} if $E$ is proper and $\alpha \rtimes \beta \in E$ implies that $\alpha \in E$ or $\beta \in E$ holds.

For two congruence $E$ and $F$ on $S$, the congruence on $S$ generated by $\{ \alpha \rtimes \beta \,|\, \alpha \in E, \beta \in F \}$ is denoted by $E \rtimes F$.

For a $\boldsymbol{B}$-algebra $A$, we define its \textit{Krull dimension} as the maximum length of strict inclusions of prime congruences on $A$.
For instance, the Krull dimension of $\boldsymbol{B}$ is zero, and that of $\boldsymbol{T}$ is one.
Like this, the Krull dimension of a semifield over $\boldsymbol{B}$ is not always zero.
By \cite[Proposition~2.10]{Joo=Mincheva2}, prime congruences are characterized as follows:
let $E$ be a congruence on $A$.
Then $E$ is a prime congruence if and only if the quotient $\boldsymbol{B}$-algebra is cancellative and totally ordered.

Let $U$ be an $l \times (n + 1)$-matrix with entries in $\boldsymbol{R}$ with an integer $l$ such that $1 \le l \le n + 1$.
This matrix $U$ is \textit{t-admissible} if

$(1)$ its first column $\boldsymbol{u}_1$ is the zero vector $\boldsymbol{0} \in \boldsymbol{R}^l$ or the first nonzero entry of $\boldsymbol{u}_1$ is positive, and 

$(2)$ for any integer $i$ such that $1 \le i \le l$, its $i$-th row is \textit{irredundant} on $\boldsymbol{R} \times \boldsymbol{Z}^n$, i.e., there exist $c \in \boldsymbol{R}$ and $\boldsymbol{n} \in \boldsymbol{Z}^n$ such that the first nonzero entry of $U \begin{pmatrix} c \\ \boldsymbol{n} \end{pmatrix}$ is $i$-th entry.

When $U$ above is t-admissible, then the rank of $U$ is $l$ and $U$ defines $P(U)$ as the congruence on $\boldsymbol{T}[X_1^{\pm}, \ldots, X_n^{\pm}]$ generated by the set of pairs $\left(c_1 \odot \boldsymbol{X}^{\odot \boldsymbol{n}_1} \oplus c_2 \odot \boldsymbol{X}^{\odot \boldsymbol{n}_2}, c_1 \odot \boldsymbol{X}^{\odot \boldsymbol{n}_1} \right)$ such that $U \begin{pmatrix} c_1 - c_2 \\ \boldsymbol{n}_1 - \boldsymbol{n}_2 \end{pmatrix}$ is $\boldsymbol{0}$ or its first nonzero entry is positive.
Note that $P(U)$ is written as $P(U)_{\boldsymbol{T}}$ in \cite{Joo=Mincheva2}.
Then $\boldsymbol{T}[X_1^{\pm}, \ldots, X_n^{\pm}] / P(U)$ is a totally ordered $\boldsymbol{B}$-algebra by the definition of $P(U)$ and cancellative, and hence $P(U)$ is in fact a prime congruence on $\boldsymbol{T}[X_1^{\pm}, \ldots, X_n^{\pm}]$.
For $1 \le i \le l$, let $U(i)$ denote the $i \times (n + 1)$-matrix consisting of the first $i$ rows of $U$, and $U(0) \colonequals \begin{pmatrix} 0 & \cdots & 0 \end{pmatrix}$ the $1 \times (n + 1)$-zero matrix.
Note that $U(l)$ is $U$ itself.
By definition, $U(i)$ is t-admissible for any $1 \le i \le l$, and
\begin{align*}
    P(U) \subsetneqq P(U(l - 1)) \subsetneqq \cdots \subsetneqq P(U(1)) \subsetneqq P(U(0))
\end{align*}
hold, where $P(U(0))$ is defined by the maximum proper congruence on $\boldsymbol{T}[X_1^{\pm}, \ldots, X_n^{\pm}]$, i.e., $\boldsymbol{T}[X_1^{\pm}, \ldots, X_n^{\pm}] / P(U(0))$ is isomorphic to $\boldsymbol{B}$.
Note that $P(U(0))$ is a prime congruence and admits the definition of $P(U)$ above.

\begin{thm}[{\cite[Theorem~4.13(iii)]{Joo=Mincheva2}}]
    \label{thm:prime1}
In the above setting, every congruence of $\boldsymbol{T}[X_1^{\pm}, \ldots, X_n^{\pm}]$ containing $P(U)$ is of the form $P(U(i))$ for some $0 \le i \le l$.
In particular, $\operatorname{dim}\boldsymbol{T}[X_1^{\pm}, \ldots, X_n^{\pm}] / P(U)$ coincides with the rank $\operatorname{rank}U$ of $U$.
\end{thm}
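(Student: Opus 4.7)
The plan is to analyse the quotient $R \colonequals \boldsymbol{T}[X_1^{\pm},\ldots,X_n^{\pm}]/P(U)$ directly, using the bijection between proper congruences on $\boldsymbol{T}[X_1^{\pm},\ldots,X_n^{\pm}]$ containing $P(U)$ and proper congruences on $R$ recalled in Subsection~\ref{subsection2.3}, and to translate the classification into one of convex subgroups of an explicit totally ordered abelian group.

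First I would show that every element of $R$ is either $-\infty$ or the class of a single monomial, and identify the multiplicative group $R^{\times}$ as an ordered abelian group with $G_U \colonequals U(\boldsymbol{R}\times\boldsymbol{Z}^n)\subseteq\boldsymbol{R}^l$ equipped with the restricted lex order. The generating pairs of $P(U)$ absorb the smaller of any two monomial terms into the larger with respect to the lex order induced by $U$, so iterating over the terms of any $f\in\boldsymbol{T}[X_1^{\pm},\ldots,X_n^{\pm}]$ shows $f$ is $P(U)$-equivalent to its maximal term, and two monomials $c_1\odot\boldsymbol{X}^{\odot\boldsymbol{n}_1}$ and $c_2\odot\boldsymbol{X}^{\odot\boldsymbol{n}_2}$ coincide in $R$ iff $U\cdot\t(c_1-c_2,\boldsymbol{n}_1-\boldsymbol{n}_2)=\boldsymbol{0}$. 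Next, for any proper congruence $\bar E$ on $R$, the argument of Remark~\ref{rem1} shows that identifying $-\infty$ with a monomial would force $\bar E$ improper, so $\bar E$ restricts to a group congruence on $R^{\times}\cong G_U$. For $a\le b\le c$ in $G_U$ with $(a,c)\in\bar E$, the identity $(b,c)=(a\oplus b,\,c\oplus b)\in\bar E$ shows $\bar E$ is order-convex, and thus it is determined by its identity class, a convex subgroup $H\subseteq G_U$.

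The core step is the classification of such $H$: I claim they are exactly $G_U\cap V_i$ for $V_i\colonequals\{\boldsymbol{0}\}^i\times\boldsymbol{R}^{l-i}$ and $0\le i\le l$. These are clearly convex. Conversely, given convex $H\subseteq G_U$, set $i\colonequals\max\{j:H\subseteq V_j\}$; picking $h\in H\setminus V_{i+1}$ and, for any $g\in G_U\cap V_i$, choosing an integer $k$ large enough that $k|h_{i+1}|>|g_{i+1}|$, one finds $-k|h|\le g\le k|h|$ in lex order (since all three vectors have zero first $i$ entries, and the $(i+1)$-th entries of $k|h|-g$ and $g+k|h|$ are positive). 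Convexity of $H$ then places $g$ in $H$, so $H=G_U\cap V_i$. The t-admissibility of $U$ guarantees each strict inclusion $G_U\cap V_i\subsetneq G_U\cap V_{i-1}$ by producing, via irredundancy of the $i$-th row, an element of $G_U$ whose first nonzero entry lies exactly in position $i$. A direct comparison of generators identifies the congruence on $R$ coming from $G_U\cap V_i$ with $P(U(i))/P(U)$, giving the claimed list. As the $l+1$ resulting proper congruences are all prime (their quotients are cancellative totally ordered $\boldsymbol{B}$-algebras), they form a chain of maximal length, so $\operatorname{dim}R=l=\operatorname{rank}U$.

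The hard part is the convex-subgroup classification. Everything else is routine manipulation of the definitions, but this step uses the fine structure of $U$: the dominance argument exploits the $\boldsymbol{R}$-direction of the domain to make $k|h|$ arbitrarily large in its $(i+1)$-th coordinate, while the strict-chain argument relies crucially on the irredundancy condition. Without t-admissibility, distinct $V_j$ could intersect $G_U$ in the same subgroup, collapsing the chain and reducing the Krull dimension below $l$.
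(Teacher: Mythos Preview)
The paper does not supply its own proof of this theorem: it is quoted verbatim as \cite[Theorem~4.13(iii)]{Joo=Mincheva2} and used as a black box throughout Section~\ref{section3}. So there is no in-paper argument to compare against; the relevant question is simply whether your sketch stands on its own.

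It does. Your reduction is the standard one and matches the strategy in Jo\'{o}--Mincheva: since $R=\boldsymbol{T}[X_1^{\pm},\ldots,X_n^{\pm}]/P(U)$ is a totally ordered cancellative $\boldsymbol{B}$-algebra in which every nonzero element is a unit, it is the semifield $G_U\cup\{-\infty\}$ for the totally ordered abelian group $G_U=U(\boldsymbol{R}\times\boldsymbol{Z}^n)$ under lex, and proper congruences on such a semifield are exactly those coming from convex subgroups of $G_U$. Your convexity argument (adding $(b,b)$ to $(a,c)$) and your classification of convex subgroups as $G_U\cap V_i$ are both correct; the integer-multiple dominance step works because $h_{i+1}\neq 0$ is a nonzero real, so integer multiples of $|h|$ eventually dominate any fixed $g$ in the $(i+1)$-th coordinate. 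The identification of $G_U\cap V_i$ with $P(U(i))/P(U)$ is immediate from the definition of $P(U(i))$, and irredundancy of each row is precisely what makes the chain $G_U\cap V_0\supsetneq\cdots\supsetneq G_U\cap V_l$ strict. One small remark: the theorem as stated in the paper says ``every congruence'', but the improper congruence is of course not of the form $P(U(i))$; your restriction to proper congruences via Remark~\ref{rem1} is the right reading.
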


\begin{thm}[{\cite[Theorem~4.14(i)]{Joo=Mincheva2}}]
    \label{thm:prime2}
For every prime congruence $P$ on $\boldsymbol{T}[X_1^{\pm}, \ldots, X_n^{\pm}]$, there exists a t-admissible matrix $U$ such that $P = P(U)$ or $P = P(U(0))$.
\end{thm}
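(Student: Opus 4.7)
The plan is to read off the matrix $U$ from the totally ordered abelian group of monomials in the quotient. By the characterization of prime congruences recalled after \cite[Proposition~2.10]{Joo=Mincheva2} in Subsection~\ref{subsection2.4}, $S \colonequals \boldsymbol{T}[X_1^{\pm}, \ldots, X_n^{\pm}]/P$ is a cancellative, totally ordered $\boldsymbol{B}$-algebra, so its semifield of fractions $Q(S)$ exists and the nonzero elements of $Q(S)$ form a totally ordered abelian group under tropical multiplication. Let $\pi$ denote the quotient map and consider the group homomorphism
\begin{align*}
\phi \colon \boldsymbol{R} \oplus \boldsymbol{Z}^n \to Q(S) \setminus \{0_{Q(S)}\}, \qquad (c, \boldsymbol{n}) \mapsto \pi(c \odot \boldsymbol{X}^{\odot \boldsymbol{n}}),
\end{align*}
with image $G$. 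Because $S$ is totally ordered, $\pi(f)$ equals the maximum of $\pi(m)$ over the monomials $m$ of $f$, so $P$ is entirely determined by the preorder on $\boldsymbol{R} \oplus \boldsymbol{Z}^n$ pulled back from the order on $G$ through $\phi$. If $G$ is trivial, then $S \cong \boldsymbol{B}$ and $P = P(U(0))$; assume $G \ne 0$ hereafter.

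By the structure theory of totally ordered abelian groups, $G$ admits a strictly descending chain of convex subgroups $G = C_0 \supsetneq C_1 \supsetneq \cdots \supsetneq C_l = 0$ whose successive quotients $C_{i-1}/C_i$ are Archimedean, each embedding order-preservingly into $\boldsymbol{R}$ by H\"{o}lder's theorem. The bound $l \le n+1$ is established as follows. The subgroup $A \colonequals \phi(\boldsymbol{R} \oplus 0) \subseteq G$ is either trivial or order-isomorphic to $\boldsymbol{R}$, since its kernel is a convex subgroup of $\boldsymbol{R}$ and the only convex subgroups of $\boldsymbol{R}$ are $0$ and $\boldsymbol{R}$; hence $A$ has Archimedean rank at most $1$. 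The subgroup $B \colonequals \phi(0 \oplus \boldsymbol{Z}^n) \subseteq G$ is finitely generated and torsion-free, so $B \cong \boldsymbol{Z}^r$ for some $r \le n$, whence any total order on $B$ has Archimedean rank at most $n$. Since every element of $G$ is a sum $a + b$ with $a \in A$ and $b \in B$ and the Archimedean class of a sum is dominated by those of its summands, the Archimedean classes of $G$ are a subset of the union of those of $A$ and $B$; hence $l \le 1 + n$.

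By the Hahn embedding theorem, since the Archimedean rank $l$ is finite, there exists an order-preserving group embedding $\iota \colon G \hookrightarrow \boldsymbol{R}^l$ with lexicographic order on $\boldsymbol{R}^l$. One may choose $\iota$ so that the H\"{o}lder embedding at the unique Archimedean level to which $A$ contributes is $\boldsymbol{R}$-linear (a positive scalar) and the embeddings at other levels vanish on $A$; this exploits the divisibility of $\boldsymbol{R}$ as a $\boldsymbol{Z}$-module to extend partial maps coherently. The composition $\Phi \colonequals \iota \circ \phi \colon \boldsymbol{R} \oplus \boldsymbol{Z}^n \to \boldsymbol{R}^l$ is then $\boldsymbol{R}$-linear in its first coordinate and $\boldsymbol{Z}$-linear in the remaining $n$ coordinates, so it is encoded by an $l \times (n+1)$ real matrix $U$. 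By construction, the preorder on $\boldsymbol{R} \oplus \boldsymbol{Z}^n$ pulled back via $\Phi$ from lex on $\boldsymbol{R}^l$ agrees with the preorder pulled back via $\phi$.

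The matrix $U$ is automatically t-admissible: condition~(1) holds because $\pi(1) \ge 1_S$ in $S$ (as $1 \ge 0$ in $\boldsymbol{R}$) forces $\Phi(1, \boldsymbol{0})$ to be lexicographically nonnegative, which is the required sign condition on the first column; condition~(2) holds because each strict inclusion $C_{i-1} \supsetneq C_i$ is witnessed by some $(c, \boldsymbol{n})$ with $\phi(c, \boldsymbol{n}) \in C_{i-1} \setminus C_i$, making $U \begin{pmatrix} c \\ \boldsymbol{n} \end{pmatrix}$ have its first nonzero entry at position $i$. Finally, $P = P(U)$ follows because both are generated by pairs of the form $(c_1 \odot \boldsymbol{X}^{\odot \boldsymbol{n}_1} \oplus c_2 \odot \boldsymbol{X}^{\odot \boldsymbol{n}_2}, c_1 \odot \boldsymbol{X}^{\odot \boldsymbol{n}_1})$ for which $U \begin{pmatrix} c_1 - c_2 \\ \boldsymbol{n}_1 - \boldsymbol{n}_2 \end{pmatrix}$ is lex-nonnegative, matching via the monomial reduction of the first paragraph. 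The main technical obstacle is the construction of $\Phi$ as a real matrix in the third paragraph: making the first coordinate $\boldsymbol{R}$-linear (rather than merely $\boldsymbol{Z}$-linear) requires coordinating the H\"{o}lder embeddings across Archimedean levels and using the injectivity of $\boldsymbol{R}$ as a $\boldsymbol{Z}$-module.
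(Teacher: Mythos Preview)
The paper does not prove this statement; it is quoted from \cite[Theorem~4.14(i)]{Joo=Mincheva2} and used as a black box, so there is no proof in the paper to compare against. Your outline follows the natural strategy (and essentially that of Jo\'{o}--Mincheva): pass to the totally ordered cancellative quotient, study the induced group preorder on the monomial lattice $\boldsymbol{R}\oplus\boldsymbol{Z}^n$, and realise that preorder lexicographically via a Hahn/H\"older embedding.

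There is one genuine gap. Your bound $l\le n+1$ on the Archimedean rank of $G$ rests on the claim that every Archimedean class of $G=A+B$ is represented by an element of $A\cup B$, because ``the Archimedean class of a sum is dominated by those of its summands.'' That domination statement is true but points the wrong way: a sum $a+b$ may land in a strictly \emph{smaller} Archimedean class than either summand (leading terms can cancel), so a class of $G$ need not be witnessed inside $A\cup B$. For a concrete failure of the inference, take $\boldsymbol{Z}^3$ with lex order, $A=\langle(1,1,0)\rangle$ and $B=\langle(1,0,0),(0,0,1)\rangle$; then $A+B=\boldsymbol{Z}^3$, yet no element of $A\cup B$ lies in the middle Archimedean class. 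The bound $l\le n+1$ is of course correct (the theorem is true), but it needs a different argument; you should consult \cite{Joo=Mincheva2} for this. Note also that without finiteness of $l$ established first, you cannot invoke the finite-rank Hahn embedding into $\boldsymbol{R}^l$ that the rest of your argument uses.

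Your identification of the $\boldsymbol{R}$-linearity of the first column of $U$ as the main technical point is apt, and the fix you indicate works: at the unique level $k$ containing $A$, any order-preserving additive map $\boldsymbol{R}\to\boldsymbol{R}$ is automatically $\boldsymbol{R}$-linear; at levels $j>k$ one has $A\cap C_{j-1}=0$, so the H\"older map on $C_{j-1}$ extends to $C_{j-1}\oplus A$ by zero on $A$ and then to all of $G$ by the injectivity of $\boldsymbol{R}$ as a $\boldsymbol{Z}$-module; at levels $j<k$ the image of $A$ is already zero.
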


\begin{rem}
	\label{rem2}
By Theorems~\ref{thm:prime1} and \ref{thm:prime2}, for a congruence $E$ on $\boldsymbol{T}[X_1^{\pm}, \ldots, X_n^{\pm}]$, to calculate the Krull dimension of $\boldsymbol{T}[X_1^{\pm}, \ldots, X_n^{\pm}] / E$, it is enough to find a t-admissible matrix $U$ of maximum rank such that $E \subset P(U)$.
\end{rem}

\begin{rem}
	\label{rem3}
\upshape{
Since the quotient semiring $\boldsymbol{T}[X_1^{\pm}, \ldots, X_n^{\pm}] / P$ by a prime congruence $P$ is totally ordered, every tropical Laurent polynomial $f$ has a (possibly not unique) term $m_f$ such that $(f, m_f) \in P$.
Note that $m_f$ is one of the maximum terms of $f$ with respect to the term ordering defined by $P$.
Hence the pair of tropical Laurent polynomials $(f, g)$ is in $P$ if and only if so is the pair $(m_f, m_g)$.
}
\end{rem}

\subsection{Tropical rational function semifields and congruence varieties}
	\label{subsection2.5}

If $\overline{E}$ denotes the set
\begin{align*}
    \left\{ (f, g) \in \boldsymbol{T}[X_1, \ldots, X_n]^2 \,\middle|\, \forall x \in \boldsymbol{T}^n, f(x) = g(x) \right\},
\end{align*}
then it is a congruence on $\boldsymbol{T}[X_1, \ldots, X_n]$ and the semiring $\overline{\boldsymbol{T}[X_1, \ldots, X_n]} \colonequals \boldsymbol{T}[X_1, \ldots, X_n] / \overline{E}$ is a cancellative $\boldsymbol{B}$-algebra by \cite[Theorem 1]{Bertram=Easton} and \cite[Proposition 5.5 and Theorem 4.14(v)]{Joo=Mincheva2}.
We call it the \textit{tropical polynomial function semiring}.
We call its semifield of fractions the \textit{tropical rational function semifield} and write it as $\overline{\boldsymbol{T}(X_1, \ldots, X_n)}$.
In what follows, by abuse of notation, the image of each $X_i$ in $\overline{\boldsymbol{T}(X_1, \ldots, X_n)}$ is again written by $X_i$.
For a subset $T$ of $\boldsymbol{T}[X_1^{\pm}, \ldots, X_n^{\pm}]^2$ or $\overline{\boldsymbol{T}(X_1, \ldots, X_n)}^2$, we define $\boldsymbol{V}(T) \colonequals \{ x \in \boldsymbol{R}^n \,|\, \forall (f, g) \in T, f(x) = g(x)\}$ and call it the \textit{congruence variety} associated with $T$.
Note here that $\boldsymbol{V}(T) = \boldsymbol{V}(\langle T \rangle)$ holds by definition.
For an element $(f, g)$ of $\boldsymbol{T}[X_1^{\pm}, \ldots, X_n^{\pm}]^2$ or $\overline{\boldsymbol{T}(X_1, \ldots, X_n)}^2$, we use the notation $\boldsymbol{V}((f, g))$ to indicate $\boldsymbol{V}(\{ (f, g) \})$.
By defining subsets of $\boldsymbol{R}^n$ to be closed if they are of the form of congruence varieties, a topology on $\boldsymbol{R}^n$ is determined and coincides with the Euclidean topology on $\boldsymbol{R}^n$ by \cite[Lemma~3.7.4]{Giansiracusa=Giansiracusa}.
For a congruence $E$ on $\overline{\boldsymbol{T}(X_1, \ldots, X_n)}$, if $\boldsymbol{V}(E)$ is nonempty, then $\overline{\boldsymbol{T}(X_1, \ldots, X_n)} /E$ is a semifield over $\boldsymbol{T}$ by \cite[Lemma~3.6]{JuAe6}.
For a subset $V$ of $\boldsymbol{R}^n$, let
\begin{align*}
\boldsymbol{E}_{\pm}(V) \colonequals \left\{ (f, g) \in \boldsymbol{T}[X_1^{\pm}, \ldots, X_n^{\pm}]^2 \,\middle|\, \forall x \in V, f(x) = g(x) \right\}
\end{align*}
and
\begin{align*}
\boldsymbol{E}(V) \colonequals \left\{ (f, g) \in \overline{\boldsymbol{T}(X_1, \ldots, X_n)}^2 \,\middle|\, \forall x \in V, f(x) = g(x) \right\}.
\end{align*}
Then these are congruences on $\boldsymbol{T}[X_1^{\pm}, \ldots, X_n^{\pm}]$ and $\overline{\boldsymbol{T}(X_1, \ldots, X_n)}$, respectively.

The $\boldsymbol{B}$-algebra homomorphism from $\boldsymbol{T}$ to $\boldsymbol{B}$ sending any real number to $0$ and $-\infty$ to $-\infty$ is naturally extended to the $\boldsymbol{B}$-algebra homomorphism from $\boldsymbol{T}[X_1^{\pm}, \ldots, X_n^{\pm}]$ (resp.~$\overline{\boldsymbol{T}(X_1, \ldots, X_n)}$) to $\boldsymbol{T}[X_1^{\pm}, \ldots, X_n^{\pm}]$ (resp.~$\overline{\boldsymbol{T}(X_1, \ldots, X_n)}$).
In this paper, the image of $f \in \boldsymbol{T}[X_1^{\pm}, \ldots, X_n^{\pm}]$ (resp.~$f \in \overline{\boldsymbol{T}(X_1, \ldots, X_n)}$) by this $\boldsymbol{B}$-algebra homomorphism is denoted by $f_{\boldsymbol{B}}$.

\begin{rem}
	\label{rem4}
\upshape{
Let $U \colonequals \begin{pmatrix} 1 & a_1 & \cdots & a_n \end{pmatrix}$ with $\boldsymbol{a} = \t(a_1, \ldots, a_n) \in \boldsymbol{R}^n$.
Then clearly $U$ is t-admissible and $P(U) = \boldsymbol{E}(\boldsymbol{a})$ holds.
Thus for any congruence $C$ contained in this $P(U)$, we have $\boldsymbol{V}(C) \supset \boldsymbol{V}(P(U)) = \boldsymbol{V}(\boldsymbol{E}(\boldsymbol{a})) = \{ \boldsymbol{a} \}$.
In particular, if $U^{\prime}$ is a t-admissible matrix such that $U^{\prime}(1) = U$, then $\boldsymbol{V}(P(U^{\prime})) \supset \boldsymbol{V}(P(U)) = \boldsymbol{V}(\boldsymbol{E}(\boldsymbol{a})) = \{ \boldsymbol{a} \}$.
}
\end{rem}

\subsection{Tropicalization}
	\label{subsection2.6}

Let $K$ be a field.
A map $v \colon K \to \boldsymbol{T}$ is a \textit{valuation} if

(1) $v(0_K) = - \infty$ and $v(1_K) = 0$ hold,

(2) $v(ab) = v(a) + v(b)$ holds for any $a, b \in K$, and

(3) $v(a + b) \le \operatorname{max}\{ v(a), v(b) \}$ holds for any $a,b \in K$.
Moreover, $v(a) \not= v(b)$ implies $v(a + b) = \operatorname{max}\{ v(a), v(b) \}$.

The pair $(K, v)$ (for short, $K$) is called a \textit{valuation field}.
For a valuation field $K$, we can define the map $\operatorname{trop}$ from the Laurent polynomial ring $K[X_1^{\pm}, \ldots, X_n^{\pm}]$ in $n$-variables with coefficiencts in $K$ to  $\boldsymbol{T}[X_1^{\pm}, \ldots, X_n^{\pm}]$ by the correspondence
\begin{align*}
    \sum_{\boldsymbol{n} \in A} c_{\boldsymbol{n}} \boldsymbol{X}^{\boldsymbol{n}} \mapsto \bigoplus_{\boldsymbol{n} \in A} v(c_{\boldsymbol{n}}) \odot \boldsymbol{X}^{\odot \boldsymbol{n}},
\end{align*}
where $A$ is a nonempty finite subset of $\boldsymbol{Z}^n$.
The map $\operatorname{trop}$ is called a \textit{tropicalization map} and the image of an ideal $I$ of $K[X_1^{\pm}, \ldots, X_n^{\pm}]$ by $\operatorname{trop}$ is called the \textit{tropicalization} of $I$.

Every tropical Laurent polynomial $F$ defines the \textit{tropical hypersurface}
\begin{align*}
    \boldsymbol{V}(F) \colonequals \{ x \in \boldsymbol{R}^n \,|\, F \text{ has at least two terms which achieve its maximum at } x \}.
\end{align*}
If $F$ is of degree one, then $\boldsymbol{V}(F)$ is called a \textit{tropical hyperplane}.
When $F$ is of degree one and homogeneous, it is a \textit{tropical linear form}.

If $I$ is an ideal in the Laurent polynomial ring over a valuation field $K$ in $n$-variables and defines an algebraic subvariety $X$ in the algebraic torus $(K^{\ast})^n$, where $K^{\ast} \colonequals K \setminus \{ 0_K \}$, then the intersection
\begin{align*}
    \bigcap_{f \in I} \boldsymbol{V}(\operatorname{trop}(f)) \subset \boldsymbol{R}^n
\end{align*}
is called the \textit{tropicalization} of $X$ and denoted by $\operatorname{trop}(X)$.
Then there exist a finite number of Laurent polynomials $f_1, \ldots, f_l$ in $I$ satisfying $\bigcap_{i = 1}^l \boldsymbol{V}(\operatorname{trop}(f_i)) = \operatorname{trop}(X)$ (\cite[Theorem~2.6.6]{Maclagan=Sturmfels}).
When $I$ is prime, it is the support of a $v(K^{\ast})$-rational polyhedral complex of dimension $\operatorname{dim}X$ (in fact, it has a more rich structure by \cite[Theorem~3.3.5]{Maclagan=Sturmfels}).

\subsection{Scheme-theoretic tropicalization}
	\label{subsection2.7}

For a subset $T$ of $\boldsymbol{T}[X_1^{\pm}, \ldots, X_n^{\pm}]$, the \textit{bend congruence} $\operatorname{Bend}(T)$ is defined by the congruence generated by all $(f, f_{\hat{m}})$ with any $f \in T$ and its any term $m$, where $f_{\hat{m}}$ denotes the tropical Laurent polynomial obtained from $f$ by removing $m$.
Then the intersection of the tropical hypersurfaces defined by all elements of $T$ coincides with the congruence variety $\boldsymbol{V}(\operatorname{Bend}(T))$.
Moreover, in \cite[Theorem~1.1]{Maclagan=Rincon}, it was proved that the tropicalization $\operatorname{trop}(I)$ of an ideal $I$ in $K[X_1^{\pm}, \ldots, X_n^{\pm}]$ determines $\operatorname{Bend}(\operatorname{trop}(I))$ and vice versa, where $K$ is a valuation field.
This $\operatorname{Bend}(\operatorname{trop}(I))$ is called the \textit{scheme-theoretic tropicalization} of $I$.

\subsection{Tropical curves and rational functions}
	\label{subsection2.8}

In this paper, a \textit{graph} means an unweighted, undirected, finite, connected nonempty multigraph that may have loops.
For a graph $G$, the set of edges is denoted by $E(G)$.
A vertex $v$ of $G$ is a \textit{leaf end} if $v$ is incident to only one edge and this edge is not loop.
A \textit{leaf edge} is an edge of $G$ incident to a leaf end.

A \textit{tropical curve} is the topological space associated with the pair $(G, l)$ of a graph $G$ and a function $l \colon E(G) \to {\boldsymbol{R}}_{>0} \cup \{\infty\}$, where $l$ can take the value $\infty$ only on leaf edges, by identifying each edge $e$ of $G$ with the closed interval $[0, l(e)]$.
The interval $[0, \infty]$ is the one-point compactification of the interval $[0, \infty)$.
We regard $[0, \infty]$ not just as a topological space but as an extended metric space.
The distance between $\infty$ and any other point is infinite.
When $l(e)=\infty$, the leaf end of $e$ must be identified with $\infty$.
If $E(G) = \{ e \}$ and $l(e)=\infty$, then we can identify either leaf ends of $e$ with $\infty$.
For a point $x$ of a tropical curve $\Gamma$, if $x$ is identified with $\infty$, then $x$ is called a \textit{point at infinity}.

Let $\Gamma$ be a tropical curve.
A map $f \colon \Gamma \to \boldsymbol{R} \cup \{ \pm \infty \}$ is a \textit{rational function} on $\Gamma$ if $f$ is a constant function of $-\infty$ or a continuous piecewise affine function with integer slopes, with a finite number of pieces, that can take the values $\pm \infty$ at only points at infinity.
Let $\operatorname{Rat}(\Gamma)$ denote the set of all rational functions on $\Gamma$.
For rational functions $f, g \in \operatorname{Rat}(\Gamma)$ and a point $x$ of $\Gamma$ that is not a point at infinity, we define
\begin{align*}
(f \oplus g) (x) \colonequals \operatorname{max}\{f(x), g(x)\} \quad \text{and} \quad (f \odot g) (x) \colonequals f(x) + g(x).
\end{align*}
We extend $f \oplus g$ and $f \odot g$ to points at infinity to be continuous on the whole of $\Gamma$.
Then both are rational functions on $\Gamma$.
Note that for any $f \in \operatorname{Rat}(\Gamma)$, we have
\begin{align*}
f \oplus (-\infty) = (-\infty) \oplus f = f
\end{align*}
and
\begin{align*}
f \odot (-\infty) = (-\infty) \odot f = -\infty.
\end{align*}
Then $\operatorname{Rat}(\Gamma)$ becomes a semifield with these two operations.
Also, $\operatorname{Rat}(\Gamma)$ becomes a $\boldsymbol{T}$-algebra and a semifield over $\boldsymbol{T}$ with the natural inclusion $\boldsymbol{T} \hookrightarrow \operatorname{Rat}(\Gamma)$.

A vector $\boldsymbol{b} \in \boldsymbol{R}^n$ is \textit{primitive} if all its components are integers and their greatest common divisor is one.
When $\boldsymbol{b}$ is primitive, for $\lambda \ge 0$, the \textit{lattice length} of $\lambda \boldsymbol{b}$ is defined as $\lambda$ (cf.~\cite[Subsection 2.1]{JuAe2}).

\section{Main results}
	\label{section3}

In this section, we prove all assertions in Section~\ref{section1}.

First, we give an easy proposition:

\begin{prop}
    \label{prop1}
The congruence variety $\boldsymbol{V}(P)$ associated with a prime congruence $P$ on $\boldsymbol{T}[X_1^{\pm}, \ldots, X_n^{\pm}]$ or $\overline{\boldsymbol{T}(X_1, \ldots, X_n)}$ is empty or consists of only one point.
\end{prop}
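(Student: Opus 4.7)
The plan is first to treat prime congruences $P$ on $\boldsymbol{T}[X_1^{\pm}, \ldots, X_n^{\pm}]$ case by case via Theorem~\ref{thm:prime2}, and then to deduce the rational function semifield case by pullback. By Theorem~\ref{thm:prime2}, either $P = P(U)$ for some t-admissible $U$ or $P = P(U(0))$. The case $P = P(U(0))$ is immediate: the quotient is $\boldsymbol{B}$, so $(0, 1) \in P(U(0))$, yet $0 \neq 1$ at every $\boldsymbol{x} \in \boldsymbol{R}^n$; hence $\boldsymbol{V}(P(U(0))) = \emptyset$.

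For $P = P(U)$, I would split on the sign of $u_{11}$. Suppose first that $u_{11} = 0$. By the irredundancy of row~1 in the definition of t-admissibility, the tuple $(u_{12}, \ldots, u_{1,n+1})$ is nonzero, so one can choose $\boldsymbol{n} \in \boldsymbol{Z}^n$ with $\sum_{j=1}^{n} u_{1,j+1} n_j > 0$. Then for every $c, c' \in \boldsymbol{R}$, the first entry of $U \begin{pmatrix} c - c' \\ \boldsymbol{n} \end{pmatrix}$ equals $\sum_{j} u_{1,j+1} n_j > 0$, so the pair $(c \odot \boldsymbol{X}^{\odot \boldsymbol{n}} \oplus c', c \odot \boldsymbol{X}^{\odot \boldsymbol{n}})$ lies in $P(U)$. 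Satisfying it at any $\boldsymbol{x} \in \boldsymbol{R}^n$ would require $c + \langle \boldsymbol{n}, \boldsymbol{x} \rangle \geq c'$ for every $c'$, which is impossible; hence $\boldsymbol{V}(P(U)) = \emptyset$ in this subcase.

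Suppose now $u_{11} > 0$, and set $\boldsymbol{a} \colonequals \t(u_{12}/u_{11}, \ldots, u_{1,n+1}/u_{11}) \in \boldsymbol{R}^n$. For any $\boldsymbol{n} \in \boldsymbol{Z}^n$ and any $c \in \boldsymbol{R}$ with $c + \langle \boldsymbol{a}, \boldsymbol{n} \rangle > 0$, the first entry of $U \begin{pmatrix} c \\ \boldsymbol{n} \end{pmatrix}$ equals $u_{11}(c + \langle \boldsymbol{a}, \boldsymbol{n} \rangle) > 0$, so $(c \odot \boldsymbol{X}^{\odot \boldsymbol{n}} \oplus 0, c \odot \boldsymbol{X}^{\odot \boldsymbol{n}}) \in P(U)$. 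Evaluating at $\boldsymbol{x} \in \boldsymbol{V}(P(U))$ yields $c + \langle \boldsymbol{n}, \boldsymbol{x} \rangle \geq 0$; letting $c$ decrease to $-\langle \boldsymbol{a}, \boldsymbol{n} \rangle$ gives $\langle \boldsymbol{n}, \boldsymbol{x} - \boldsymbol{a} \rangle \geq 0$ for every $\boldsymbol{n} \in \boldsymbol{Z}^n$. Applying this to both $\boldsymbol{n}$ and $-\boldsymbol{n}$ forces $\langle \boldsymbol{n}, \boldsymbol{x} - \boldsymbol{a} \rangle = 0$ for all $\boldsymbol{n}$, hence $\boldsymbol{x} = \boldsymbol{a}$, and $\boldsymbol{V}(P(U)) \subset \{\boldsymbol{a}\}$.

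For the rational function semifield case, I would work with the natural $\boldsymbol{B}$-algebra homomorphism $\varphi \colon \boldsymbol{T}[X_1^{\pm}, \ldots, X_n^{\pm}] \to \overline{\boldsymbol{T}(X_1, \ldots, X_n)}$ obtained by clearing denominators with a tropical monomial. For $P$ prime on $\overline{\boldsymbol{T}(X_1, \ldots, X_n)}$, the pullback $\varphi^{-1}(P)$ is a prime congruence on $\boldsymbol{T}[X_1^{\pm}, \ldots, X_n^{\pm}]$, since the induced injection $\boldsymbol{T}[X_1^{\pm}, \ldots, X_n^{\pm}]/\varphi^{-1}(P) \hookrightarrow \overline{\boldsymbol{T}(X_1, \ldots, X_n)}/P$ inherits cancellativity and total ordering, and properness transfers. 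Since $\varphi(f)(\boldsymbol{x}) = f(\boldsymbol{x})$ for every $\boldsymbol{x} \in \boldsymbol{R}^n$, we have $\boldsymbol{V}(P) \subset \boldsymbol{V}(\varphi^{-1}(P))$, and the polynomial case provides the required bound. The main subtlety is the limiting step $c \searrow -\langle \boldsymbol{a}, \boldsymbol{n} \rangle$ in the third paragraph, which upgrades the strict inequality on $c$ to a non-strict one on the conclusion; this is elementary but needs to be justified carefully.
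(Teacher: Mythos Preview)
Your proof is correct, but the route is genuinely different from the paper's. The paper argues directly from the definition of primality: assuming $\boldsymbol{V}(P)$ contains two distinct points $\boldsymbol{x},\boldsymbol{y}$, it constructs explicit tropical rational functions $f_{\boldsymbol{x}}, f_{\boldsymbol{y}}$ peaking at those points, multiplies by a common denominator $g$ to obtain polynomial functions $h_{\boldsymbol{x}}, h_{\boldsymbol{y}}$, and checks that $(h_{\boldsymbol{x}},g)\rtimes(h_{\boldsymbol{y}},g)\in\Delta\subset P$ while neither factor lies in $P$, contradicting primality. This argument handles $\boldsymbol{T}[X_1^{\pm},\ldots,X_n^{\pm}]$ and $\overline{\boldsymbol{T}(X_1,\ldots,X_n)}$ uniformly and is entirely self-contained; it uses nothing about the structure of prime congruences beyond the defining property.

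Your approach instead invokes the Jo\'{o}--Mincheva classification (Theorem~\ref{thm:prime2}) to reduce to $P(U(0))$ or $P(U)$, then reads off $\boldsymbol{V}(P)$ from the first row of $U$, and finally handles the rational function semifield by pulling back along $\varphi$. This is more structural and arguably makes the answer (namely $\boldsymbol{a}=\t(u_{12}/u_{11},\ldots,u_{1,n+1}/u_{11})$ when $u_{11}>0$) explicit, which anticipates Remark~\ref{rem4}. The cost is reliance on the classification theorem, a substantial external input, whereas the paper's proof is elementary and would work verbatim for prime congruences on any $\boldsymbol{T}$-algebra of functions on $\boldsymbol{R}^n$ rich enough to contain suitable bump functions. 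Your limiting step $c\searrow -\langle\boldsymbol{a},\boldsymbol{n}\rangle$ is fine; the inequality $c+\langle\boldsymbol{n},\boldsymbol{x}\rangle\ge 0$ for all $c>-\langle\boldsymbol{a},\boldsymbol{n}\rangle$ immediately yields the closed inequality.
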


\begin{proof}
For a point $\boldsymbol{z} = \t( z_1, \cdots, z_n ) \in \boldsymbol{R}^n$, let
\begin{align*}
f_{\boldsymbol{z}} \colonequals \left( \bigoplus_{i = 1}^n \left( z_i^{\odot (-1)} \odot X_i \oplus \left( z_i^{\odot (-1)} \odot X_i \right)^{\odot (-1)} \right) \right)^{\odot (-1)} \in \overline{\boldsymbol{T}(X_1, \ldots, X_n)}.
\end{align*}
Note that $f_{\boldsymbol{z}}$ is a tropical rational function that attains the maximum value zero at and only at $\boldsymbol{z}$.
Assume that $\boldsymbol{V}(P)$ has two distinct points $\boldsymbol{x}, \boldsymbol{y} \in \boldsymbol{R}^n$.
Then there exists $\varepsilon > 0$ such that the set of points where at least one of $\varepsilon \odot f_{\boldsymbol{x}} \oplus 0$ and $\varepsilon \odot f_{\boldsymbol{y}} \oplus 0$ takes the value zero is $\boldsymbol{R}^n$. 
Since both $\varepsilon \odot f_{\boldsymbol{x}} \oplus 0$ and $\varepsilon \odot f_{\boldsymbol{y}} \oplus 0$ are tropical rational functions, there exists $g \in \overline{\boldsymbol{T}[X_1, \ldots, X_n]} \setminus \{ -\infty \}$ such that $h_{\boldsymbol{x}} \colonequals (\varepsilon \odot f_{\boldsymbol{x}} \oplus 0) \odot g, h_{\boldsymbol{y}} \colonequals (\varepsilon \odot f_{\boldsymbol{y}} \oplus 0) \odot g \in \overline{\boldsymbol{T}[X_1, \ldots, X_n]}$.
It is easy to check that $(h_{\boldsymbol{x}}, g), (h_{\boldsymbol{y}}, g) \not\in P$ but $(h_{\boldsymbol{x}}, g) \rtimes (h_{\boldsymbol{y}}, g) \in \Delta \subset P$ when $P$ is a prime congruence on $\overline{\boldsymbol{T}(X_1, \ldots, X_n)}$, which is a contradiction.
By choosing proper preimages of $h_{\boldsymbol{x}}, h_{\boldsymbol{y}}, g$ in $\boldsymbol{T}[X_1^{\pm}, \ldots, X_n^{\pm}]$, the same argument holds, and thus it is also a contradiction in the case that $P$ is a prime congruence on $\boldsymbol{T}[X_1^{\pm}, \ldots, X_n^{\pm}]$.
\end{proof}

By Proposition~\ref{prop1} and by deducing from the classical setting (cf.~\cite[Proposition~1.7 in Chapter~I]{Hartshorne}), we wonder that for a congruence $C$ on $\boldsymbol{T}[X_1^{\pm}, \ldots, X_n^{\pm}]$ or $\overline{\boldsymbol{T}(X_1, \ldots, X_n)}$ whose congruence variety $\boldsymbol{V}(C)$ is a finite union of $\boldsymbol{R}$-rational polyhedral sets, prime congruences on $\boldsymbol{T}[X_1^{\pm}, \ldots, X_n^{\pm}]$ or $\overline{\boldsymbol{T}(X_1, \ldots, X_n)}$ containing $C$ is hardly related to the dimension of $\boldsymbol{V}(C)$
However, Mincheva proved the following theorem, which implies that the Krull dimension of $\boldsymbol{T}[X_1^{\pm}, \ldots, X_n^{\pm}] / C$ may reflect the dimension $\boldsymbol{V}(C)$:

\begin{thm}[{\cite[Theorem~7.2.1]{Mincheva}}]
    \label{thm:Mincheva}
Let $K$ be a valuation field and $I$ an ideal in $K[X_1^{\pm}, \ldots, X_n^{\pm}]$ that defines a $d$-dimensional subvariety of the algebraic torus $(K^{\ast})^n$.
Then the equality
\begin{align*}
    \operatorname{dim}\boldsymbol{T}[X_1^{\pm}, \ldots, X_n^{\pm}] / \operatorname{Bend}(\operatorname{trop}(I)) = d + 1
\end{align*}
holds.
\end{thm}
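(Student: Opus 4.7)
The plan is to invoke Remark~\ref{rem2}: the Krull dimension of $\boldsymbol{T}[X_1^{\pm}, \ldots, X_n^{\pm}]/\operatorname{Bend}(\operatorname{trop}(I))$ equals the maximum rank of a t-admissible matrix $U$ with $\operatorname{Bend}(\operatorname{trop}(I)) \subset P(U)$. I aim to show this maximum is exactly $d+1$ through matching bounds, crucially using the scheme-theoretic tropicalization identity $\boldsymbol{V}(\operatorname{Bend}(\operatorname{trop}(I))) = \operatorname{trop}(X)$.

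For the lower bound I construct $U$ geometrically. By the structure theorem \cite[Theorem~3.3.5]{Maclagan=Sturmfels}, $\operatorname{trop}(X)$ is pure of dimension $d$; pick a maximal cell $\sigma$, a point $\boldsymbol{a}$ in its relative interior, and a $\boldsymbol{Q}$-rational basis $\boldsymbol{v}_2, \ldots, \boldsymbol{v}_{d+1}$ of the direction space $V$ of $\sigma$ arranged in a lex-flag so that the irredundancy condition of t-admissibility holds. Let $U$ be the $(d+1) \times (n+1)$ matrix with first row $(1, a_1, \ldots, a_n)$ and $i$-th row $(0, v_{i,1}, \ldots, v_{i,n})$ for $i = 2, \ldots, d+1$. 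The key geometric fact is that the star fan of $\operatorname{trop}(X)$ at $\boldsymbol{a}$, equal to $\operatorname{trop}(\boldsymbol{V}(\operatorname{in}_{\boldsymbol{a}}(I)))$, contains $V$ as a linear subspace. Hence for each $f \in I$ the tropical hypersurface of $\operatorname{in}_{\boldsymbol{a}}(f)$ contains $V$: on the initial-max support $S_{\boldsymbol{a}}(f)$ the maximum of $\langle \boldsymbol{v}, \cdot \rangle$ is attained at least twice for every $\boldsymbol{v} \in V$. Evaluating at $\boldsymbol{v} = \boldsymbol{v}_2 + \epsilon\boldsymbol{v}_3 + \cdots + \epsilon^{d-1}\boldsymbol{v}_{d+1}$ for small $\epsilon > 0$, this maximum is attained exactly on the lex-maximum class, which therefore has at least two elements; these are two $U$-maximum terms of $\operatorname{trop}(f)$, yielding $(\operatorname{trop}(f), \operatorname{trop}(f)_{\hat m}) \in P(U)$ for every term $m$. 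Theorem~\ref{thm:prime1} then provides a chain of primes of length $d+1$, giving the lower bound.

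For the upper bound I show every t-admissible $U$ of rank $l$ with $\operatorname{Bend}(\operatorname{trop}(I)) \subset P(U)$ satisfies $l \leq d+1$. After normalization the first row is $(1, \boldsymbol{a}^T)$ and the remaining rows are $(0, \boldsymbol{w}_i^T)$ for $i = 2, \ldots, l$; Remark~\ref{rem4} and $\boldsymbol{V}(P(U)) \subset \operatorname{trop}(X)$ force $\boldsymbol{a} \in \operatorname{trop}(X)$. My approach is to interpret $P(U)$ as encoding a composition valuation of rank $l$ on $L \colonequals \operatorname{Frac}(K[X_1^{\pm}, \ldots, X_n^{\pm}]/I)$: the first row extends the base valuation $v_K$ on $K$ by evaluation at $\boldsymbol{a}$, and each subsequent row $(0, \boldsymbol{w}_i^T)$ adds a weight-initial refinement corresponding to an iterated initial ideal $I \rightsquigarrow \operatorname{in}_{\boldsymbol{a}}(I) \rightsquigarrow \operatorname{in}_{\boldsymbol{w}_2}(\operatorname{in}_{\boldsymbol{a}}(I)) \rightsquigarrow \cdots$. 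Abhyankar's inequality for this composition valuation then yields $l \leq \operatorname{trdeg}(L/K) + \operatorname{rank}(v_K) = d+1$.

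The main obstacle is making rigorous the correspondence between $P(U)$ (a combinatorial congruence) and a genuine composition valuation on $L$. The hypothesis $\operatorname{Bend}(\operatorname{trop}(I)) \subset P(U)$ only directly guarantees two $U$-equivalent maximum terms per $f \in I$, giving individual exponent relations in $W^\perp$ with $W = \operatorname{span}(\boldsymbol{w}_2, \ldots, \boldsymbol{w}_l)$; bootstrapping these pointwise relations into the properness of each iterated initial ideal along the chain (so that Abhyankar actually applies) requires aggregating constraints globally, which I expect to achieve using a tropical basis \cite[Theorem~2.6.6]{Maclagan=Sturmfels} to reduce to finitely many generators, together with careful tracking that each $\operatorname{in}$-step preserves non-monomiality. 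Combining both bounds via Remark~\ref{rem2} then gives $\operatorname{dim}\boldsymbol{T}[X_1^{\pm}, \ldots, X_n^{\pm}]/\operatorname{Bend}(\operatorname{trop}(I)) = d+1$.
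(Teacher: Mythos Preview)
The paper does not prove Theorem~\ref{thm:Mincheva} directly; it is quoted from Mincheva's thesis and then recovered as a special case of the paper's own Theorem~\ref{thm:main1-2} (see the discussion leading to Corollary~\ref{cor6}). So the relevant comparison is with the machinery of Propositions~\ref{prop2}--\ref{prop4} and the perturbation Lemmas~\ref{lem3}--\ref{lem:admissible2}.

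Your lower bound is close in spirit to Proposition~\ref{prop2}, but you have made it harder than necessary. The paper does not pass through star fans or initial ideals: it simply picks a point $\boldsymbol{w}$ in a $d$-cell of $\boldsymbol{V}(C)$ together with $d$ independent integer directions, and uses continuity of the individual terms of $f$ and $g$ (for a one-element tropical basis $\{(f,g)\}$) to find a single common linearity domain near $\boldsymbol{w}$. That immediately gives a $U$-maximum term on each side with matching $U$-values, hence $(f,g)\in P(U)$ and therefore $C\subset P(U)$. Your route via $\operatorname{in}_{\boldsymbol{a}}(f)$ also hides a subtlety: you identify the $U$-lex-maximum on $S_{\boldsymbol{a}}(f)$ with the maximum of $\operatorname{trop}(\operatorname{in}_{\boldsymbol{a}}(f))$ along $\boldsymbol{v}_2+\epsilon\boldsymbol{v}_3+\cdots$, but the coefficients of $\operatorname{in}_{\boldsymbol{a}}(f)$ live in the residue field and their valuations need not all vanish, so these two maxima can differ.

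The upper bound is where the real gap lies, and you have correctly flagged it yourself. Two separate problems:
\begin{itemize}
\item Your normalization ``first row $(1,\boldsymbol{a}^T)$, remaining rows $(0,\boldsymbol{w}_i^T)$'' is not automatic. A t-admissible $U$ may have its entire first column equal to zero, and then no row operation produces a leading $1$. The paper handles this case separately via Proposition~\ref{prop4} and the congruence $C_{\boldsymbol{B}}$; for $C=\operatorname{Bend}(\operatorname{trop}(I))$ this contributes $\dim\boldsymbol{V}(C_{\boldsymbol{B}})=\dim\operatorname{rec}(\operatorname{trop}(X))\le d$, which is harmless but must be checked. Even when the first column is nonzero, bringing it to $\boldsymbol{e}_1$ while preserving $C\subset P(U)$ is exactly the content of Corollary~\ref{cor2}, which requires finite generation; the paper circumvents this by applying the perturbation only to a single-element tropical basis $\langle T\rangle\subset C$ (see the proof of Proposition~\ref{prop3}), not to $C$ itself.
\item The Abhyankar step is a genuine missing idea, not a routine bookkeeping issue. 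The hypothesis $\operatorname{Bend}(\operatorname{trop}(I))\subset P(U)$ gives you, for each $f\in I$, two $U$-tied maximal terms; it does \emph{not} hand you a valuation on $K[X_1^{\pm},\ldots,X_n^{\pm}]/I$. Building a bona fide rank-$l$ valuation on $L$ from $U$ requires showing that the $U$-monomial preorder descends modulo $I$, which in turn needs every iterated initial ideal along $(\boldsymbol{a},\boldsymbol{w}_2,\ldots,\boldsymbol{w}_l)$ to remain proper and monomial-free. Your proposed fix (``use a tropical basis and track non-monomiality'') is a restatement of the difficulty, not a solution.
\end{itemize}
The paper avoids all of this: once $U$ is perturbed so that its first column is $\boldsymbol{e}_1$, the rows of $U$ are read directly as a point $\boldsymbol{w}$ plus $l-1$ directions, and the same continuity argument as in the lower bound exhibits an $(l-1)$-dimensional $\boldsymbol{R}$-rational simplex inside $\boldsymbol{V}(T)=\boldsymbol{V}(C)$, forcing $l-1\le d$. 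No valuation theory on $K$-algebras enters at all.
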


In this paper, we generalize this theorem as follows:

\begin{thm}[Theorem~\ref{thm:main1-1}]
    \label{thm:main1-2}
Let $C$ be a proper congruence on $\boldsymbol{T}[X_1^{\pm}, \ldots, X_n^{\pm}]$ and $C_{\boldsymbol{B}}$ the congruence $\langle \{ (f_{\boldsymbol{B}}, g_{\boldsymbol{B}}) \,|\, (f, g) \in C \} \rangle$ on $\boldsymbol{T}[X_1^{\pm}, \ldots, X_n^{\pm}]$.
If both $C$ and $C_{\boldsymbol{B}}$ have finite congruence tropical bases, respectively, then the equality
\begin{align*}
    \operatorname{dim} \boldsymbol{T}[ X_1^{\pm}, \ldots, X_n^{\pm}] / C = \operatorname{max} \{ \operatorname{dim} \boldsymbol{V}(C) + 1, \operatorname{dim} \boldsymbol{V}(C_{\boldsymbol{B}}) \}
\end{align*}
holds.
\end{thm}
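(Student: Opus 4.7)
By Remark~\ref{rem2}, the quantity $\dim \boldsymbol{T}[X_1^{\pm},\ldots,X_n^{\pm}]/C$ equals the maximal rank of a t-admissible matrix $U$ with $C \subseteq P(U)$, so the plan is to bound this rank from both sides. I would split the analysis according to whether the first column of $U$ vanishes, which turns out to distinguish the two terms inside the $\max$ on the right-hand side.

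For the upper bound, let $U$ be t-admissible of rank $l$ with $C \subseteq P(U)$. If the first column of $U$ is nonzero, I would rescale the first row to $(1,a_1,\ldots,a_n)$, so that by Remark~\ref{rem4} the point $\boldsymbol{a} \colonequals \t(a_1,\ldots,a_n)$ lies in $\boldsymbol{V}(C)$. Row-reducing the remaining $l-1$ rows against the first (an operation preserving $P(U)$) gives linearly independent $\boldsymbol{v}_2,\ldots,\boldsymbol{v}_l \in \boldsymbol{R}^n$; for every $(f,g) \in C$, applying Remark~\ref{rem3} produces max-term data with exponents $\boldsymbol{m}_f, \boldsymbol{m}_g$ and constants $c_f,c_g$ satisfying $(c_f-c_g)+\boldsymbol{a}\cdot(\boldsymbol{m}_f-\boldsymbol{m}_g)=0$ and $\boldsymbol{v}_j\cdot(\boldsymbol{m}_f-\boldsymbol{m}_g)=0$. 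Combined with the finite tropical basis of $C$, these identities will show that every $\boldsymbol{x}$ of the form $\boldsymbol{a}+\sum \epsilon_j \boldsymbol{v}_{j+1}$ with $\epsilon_1\gg\cdots\gg\epsilon_{l-1}>0$ sufficiently small lies in $\boldsymbol{V}(C)$, exhibiting an $(l-1)$-dimensional piece of $\boldsymbol{V}(C)$ through $\boldsymbol{a}$ and yielding $l\le\dim\boldsymbol{V}(C)+1$. If instead the first column of $U$ is zero, I would first check that $P(U)$ identifies every finite real with $0$, so $\pi_{P(U)}$ factors through $f\mapsto f_{\boldsymbol{B}}$ and $C\subseteq P(U)$ is equivalent to $C_{\boldsymbol{B}}\subseteq P(U)$; the same directional analysis applied to the rows $(0,\t\boldsymbol{v}_j)$ then produces an $l$-dimensional lex-cone in $\boldsymbol{V}(C_{\boldsymbol{B}})$, giving $l\le\dim\boldsymbol{V}(C_{\boldsymbol{B}})$.

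For the lower bound, I would fix a finite congruence tropical basis $T=\{(f_j,g_j)\}$ of $C$ and a polyhedral structure on $\boldsymbol{V}(C)=\boldsymbol{V}(T)$ refined by the bend loci of all $f_j,g_j$. With $d\colonequals\dim\boldsymbol{V}(C)$ and a top-dimensional cell $Q$, I would pick $\boldsymbol{a}$ generically in the relative interior of $Q$ so that the max term of each $f_j$ and $g_j$ at $\boldsymbol{a}$ is uniquely achieved and stable in a $Q$-neighborhood of $\boldsymbol{a}$. Choosing a basis $\boldsymbol{v}_1,\ldots,\boldsymbol{v}_d$ of the tangent space to $Q$ at $\boldsymbol{a}$ with signs and ordering arranged to make
\begin{align*}
U\colonequals\begin{pmatrix}1 & \t\boldsymbol{a}\\0 & \t\boldsymbol{v}_1\\\vdots & \vdots\\0 & \t\boldsymbol{v}_d\end{pmatrix}
\end{align*}
t-admissible (rank $d+1$ is automatic, while the sign and irredundancy conditions are achievable by row reordering and sign flips on the $\boldsymbol{v}_j$), the stability of max terms along each $\boldsymbol{v}_j$ will force $(f_j,g_j)\in P(U)$ for every generator, hence $C\subseteq P(U)$, giving $\dim\boldsymbol{T}[X_1^{\pm},\ldots,X_n^{\pm}]/C\ge d+1$. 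The parallel construction with first column $\boldsymbol{0}$, using a top-dimensional cone of $\boldsymbol{V}(C_{\boldsymbol{B}})$ and a finite tropical basis of $C_{\boldsymbol{B}}$, will similarly produce a t-admissible matrix of rank $\dim\boldsymbol{V}(C_{\boldsymbol{B}})$ containing $C_{\boldsymbol{B}}$, and hence $C$.

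The main obstacle will be the tangent/directional analysis at the interface between the algebra of $P(U)$ and the geometry of $\boldsymbol{V}(C)$ and $\boldsymbol{V}(C_{\boldsymbol{B}})$: translating $(m_f,m_g)\in P(U)$ into a genuine tangency of $\boldsymbol{V}(C)$ at $\boldsymbol{a}$ for the upper bound, and conversely securing $(f_j,g_j)\in P(U)$ from a chosen tangent frame while preserving t-admissibility for the lower bound. The finiteness hypothesis on the two tropical bases is precisely what will enable a single generic choice of $\boldsymbol{a}$ (or of ray direction) to work simultaneously for all generators, and it is also what makes $\dim\boldsymbol{V}(C)$ and $\dim\boldsymbol{V}(C_{\boldsymbol{B}})$ well-defined as polyhedral-complex dimensions.
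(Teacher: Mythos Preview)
Your plan is essentially the paper's own strategy: compute $\dim \boldsymbol{T}[X_1^\pm,\ldots,X_n^\pm]/C$ via Remark~\ref{rem2} by maximizing $\operatorname{rank}U$ over t-admissible $U$ with $C\subset P(U)$, split on whether the first column of $U$ vanishes, and match the two cases to $\dim\boldsymbol{V}(C)+1$ and $\dim\boldsymbol{V}(C_{\boldsymbol{B}})$ respectively. The lower-bound constructions and the zero-first-column upper bound are the same as in the paper (Propositions~\ref{prop2} and~\ref{prop4}).

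There is, however, one genuine gap in your upper bound for the nonzero-first-column case. You write ``rescale the first row to $(1,a_1,\ldots,a_n)$'', but the first column being nonzero does \emph{not} force the $(1,1)$-entry to be nonzero: a t-admissible $U$ may have first column $\t(0,\ldots,0,1,\ast,\ldots,\ast)$ with the leading $1$ in position $k\ge 2$, and then no rescaling of the first row produces a $1$ in position $(1,1)$. Nor do the $P(U)$-preserving row operations of Lemmas~\ref{lem3} and~\ref{lemm4} help, since the latter only allows adding an earlier row to a later one. In this situation $\boldsymbol{V}(P(U(1)))=\varnothing$, so Remark~\ref{rem4} yields no anchor point $\boldsymbol{a}\in\boldsymbol{V}(C)$ and your directional argument cannot start. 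The paper resolves this with a perturbation (Lemmas~\ref{lem:admissible1}--\ref{lem:admissible2} and Corollary~\ref{cor2}): replace row $k-1$, which reads $(0,\vec w_{k-1})$, by $(\varepsilon,\vec w_{k-1}+\varepsilon\vec w_k)$ for small $\varepsilon>0$, obtaining a t-admissible $U_\varepsilon$ of the same rank whose first nonzero first-column entry has moved up one slot; iterating brings it to position $(1,1)$. The point is that $P(U_\varepsilon)\ne P(U)$ in general, but any \emph{single} pair $(f,g)\in P(U)$---hence any finite set---remains in $P(U_\varepsilon)$ once $\varepsilon$ is small enough. This is precisely where the finite-tropical-basis hypothesis enters the upper bound: one applies the perturbation to $\langle T\rangle$ for a finite congruence tropical basis $T$ of $C$, not to $C$ itself. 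With this amendment your argument goes through and coincides with Proposition~\ref{prop3}.
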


Here the definition of finite congruence tropical bases is given as follows:

\begin{dfn}
	\label{dfn:finite congruence tropical basis}
\upshape{
Let $C$ be a congruence on $\boldsymbol{T}[X_1^{\pm}, \ldots, X_n^{\pm}]$.
A finite subset
\begin{align*}
\{ (f_1, g_1), \ldots, (f_l, g_l) \}    
\end{align*}
of $C$ is a \textit{finite congruence tropical basis} of $C$ if the equality
\begin{align*}
\boldsymbol{V}(C) = \bigcap_{i = 1}^l \boldsymbol{V}((f_i, g_i))
\end{align*}
holds.
}
\end{dfn}

\begin{lemma}[cf.~{\cite[the proof of Proposition~3.4]{JuAe7}}]
	\label{lem1}
For a congruence on $\boldsymbol{T}[X_1^{\pm}, \ldots, X_n^{\pm} ]$, if it has a finite congruence tropical basis, then it has a finite congruence tropical basis consisting of only one element.
\end{lemma}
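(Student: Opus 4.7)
My plan is to combine the given finite congruence tropical basis into a single pair in two steps: first, replace each pair by a ``symmetrized'' pair whose two sides are pointwise comparable as real functions and whose congruence variety is unchanged; then take the tropical product over all the symmetrized pairs, so that equality of the product pair at a point forces equality of every factor pair at that point.

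Before starting, I would dispose of the case in which $C$ is improper: there $(0, -\infty) \in C$ and $\boldsymbol{V}((0, -\infty)) = \emptyset = \boldsymbol{V}(C)$, so $\{ (0, -\infty) \}$ is already a one-element basis. So I assume $C$ is proper and fix a given finite congruence tropical basis $\{ (f_1, g_1), \ldots, (f_l, g_l) \}$. By Remark~\ref{rem1}, any pair in the proper congruence $C$ with some $-\infty$ component must be $(-\infty, -\infty)$, whose congruence variety is all of $\boldsymbol{R}^n$ and hence can be deleted without changing the intersection. Thus I may assume every $f_i$ and $g_i$ is different from $-\infty$, which makes all the evaluations $f_i(x), g_i(x)$ at $x \in \boldsymbol{R}^n$ finite real numbers.

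For each $i$ I then define
\begin{align*}
F_i \colonequals f_i \odot g_i, \qquad G_i \colonequals f_i^{\odot 2} \oplus g_i^{\odot 2}.
\end{align*}
The pair $(F_i, G_i)$ lies in $C$: multiplying $(f_i, g_i) \in C$ by $f_i$ and by $g_i$ (axiom~(5)) yields $(f_i^{\odot 2}, f_i \odot g_i)$ and $(f_i \odot g_i, g_i^{\odot 2})$ in $C$, and summing these (axiom~(4)) gives $(F_i, G_i) \in C$. Pointwise one has $G_i(x) - F_i(x) = |f_i(x) - g_i(x)| \ge 0$ in ordinary arithmetic, with equality at $x$ if and only if $f_i(x) = g_i(x)$; hence $\boldsymbol{V}((F_i, G_i)) = \boldsymbol{V}((f_i, g_i))$.

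Finally I set $F \colonequals \bigodot_{i=1}^l F_i$ and $G \colonequals \bigodot_{i=1}^l G_i$. Iterating axiom~(5) gives $(F, G) \in C$, and the identity $G(x) - F(x) = \sum_{i=1}^l |f_i(x) - g_i(x)|$ expresses the difference as a sum of nonnegative reals, which vanishes if and only if every summand does. Therefore $\boldsymbol{V}((F, G)) = \bigcap_{i=1}^l \boldsymbol{V}((f_i, g_i)) = \boldsymbol{V}(C)$, and $\{ (F, G) \}$ is the desired one-element finite congruence tropical basis. The only delicate point I anticipate is keeping all the evaluations real-valued, so that the identity $G_i - F_i = |f_i - g_i|$ and its sum over $i$ are meaningful; this is exactly what the reduction via Remark~\ref{rem1} secures, and after that the construction is a short algebraic verification.
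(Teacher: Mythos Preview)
Your proof is correct and follows essentially the same route as the paper: your symmetrized pair $(F_i, G_i) = (f_i \odot g_i,\, f_i^{\odot 2} \oplus g_i^{\odot 2})$ agrees as a function on $\boldsymbol{R}^n$ with the paper's pair $\bigl((f_i \oplus g_i)^{\odot 2},\, f_i \odot g_i\bigr)$, and both proofs then take the tropical product over $i$ and use that a sum of nonnegative quantities vanishes only if each does. One small slip to fix: summing $(f_i^{\odot 2}, f_i \odot g_i)$ and $(f_i \odot g_i, g_i^{\odot 2})$ as written does not yield $(F_i, G_i)$; apply symmetry first to get $(f_i \odot g_i, f_i^{\odot 2})$ and $(f_i \odot g_i, g_i^{\odot 2})$, then sum and use additive idempotency.
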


\begin{proof}
Let $C$ be a congruence on $\boldsymbol{T}[X_1^{\pm}, \ldots, X_n^{\pm} ]$ that has a finite congruence tropical basis $\{ (f_1, g_1), \ldots, (f_l, g_l) \}$.
Since $(f_i \oplus g_i, f_i) = (f_i \oplus g_i, f_i \oplus f_i), (f_i \oplus g_i, g_i) = (f_i \oplus g_i, g_i \oplus g_i) \in C$ for any $i$, the pair $((f_i \oplus g_i)^{\odot 2}, f_i \odot g_i) = ((f_i \oplus g_i) \odot (f_i \oplus g_i), f_i \odot g_i)$ also belongs to $C$.
Hence $\left( \bigodot_{i = 1}^l (f_i \oplus g_i)^{\odot 2}, \bigodot_{i = 1}^l (f_i \odot g_i) \right) \in C$ follows.
Therefore the inclusion
\begin{align*}
\boldsymbol{V}\left(\left( \bigodot_{i = 1}^l (f_i \oplus g_i)^{\odot 2}, \bigodot_{i = 1}^l (f_i \odot g_i) \right)\right) \supset \boldsymbol{V}(C)
\end{align*}
holds.
Assume $x \in \boldsymbol{V}\left(\left( \bigodot_{i = 1}^l (f_i \oplus g_i)^{\odot 2}, \bigodot_{i = 1}^l (f_i \odot g_i) \right)\right)$, i.e., $\bigodot_{i = 1}^l (f_i(x) \oplus g_i(x))^{\odot 2} = \bigodot_{i = 1}^l (f_i(x) \odot g_i(x))$.
This implies $(f_i(x) \oplus g_i(x))^{\odot 2} = f_i(x) \odot g_i(x)$ for any $i$ since $(f_i(x) \oplus g_i(x))^{\odot 2} = f_i(x)^{\odot 2} \oplus f_i(x) \odot g_i(x) \oplus g_i(x)^{\odot 2} \ge f_i(x) \odot g_i(x)$.
On the other hand, $(f_i(x) \oplus g_i(x))^{\odot 2}$ is $f_i(x)^{\odot 2}$ when $f_i(x) \ge g_i(x)$; $g_i(x)^{\odot 2}$ when $f_i(x) < g_i(x)$, and thus $f_i(x) = g_i(x)$ must hold for each $i$.
Hence $x$ is in $\bigcap_{i = 1}^l \boldsymbol{V}((f_i, g_i)) = \boldsymbol{V}(C)$.
In conclusion,
\begin{align*}
\boldsymbol{V}\left(\left( \bigodot_{i = 1}^l (f_i \oplus g_i)^{\odot 2}, \bigodot_{i = 1}^l (f_i \odot g_i) \right)\right) = \boldsymbol{V}(C)
\end{align*}
holds.
\end{proof}

\begin{rem}
    \label{rem5}
\upshape{
For a congruence $C$ on $\boldsymbol{T}[X_1^{\pm}, \ldots, X_n^{\pm} ]$, if it has a finite congruence tropical basis, then its congruence variety $\boldsymbol{V}(C)$ is a finite union of $\boldsymbol{R}$-rational polyhedral sets (cf.~\cite{Grigoriev}, \cite[Corollary 3.5]{JuAe7}).
Then, by Lemma~\ref{lem1}, there exists a finite congruence tropical basis $T$ consisting of only one element $(f, g)$ and the exponents of terms of $f$ and $g$ (resp.~the coefficients of terms of $f$ and $g$ other than $-\infty$) define a finite number of matrices $A_1, \ldots, A_k$ whose all entries are integers (resp.~ vectors $\boldsymbol{b}_1, \ldots, \boldsymbol{b}_k$) so that
\begin{align*}
    \boldsymbol{V}(C) = \bigcup_{i = 1}^k \{ \boldsymbol{x} \in \boldsymbol{R}^n \,|\, A_i \boldsymbol{x} \ge \boldsymbol{b}_i \}.
\end{align*}
If $T_{\boldsymbol{B}}$ denotes $\{(f_{\boldsymbol{B}}, g_{\boldsymbol{B}})\}$, then the equality
\begin{align*}
    \boldsymbol{V}(T_{\boldsymbol{B}}) = \bigcup_{i = 1}^k \{ \boldsymbol{x} \in \boldsymbol{R}^n \,|\, A_i \boldsymbol{x} \ge \boldsymbol{0} \}
\end{align*}
holds.
If $\{ \boldsymbol{x} \in \boldsymbol{R}^n \,|\, A_i \boldsymbol{x} \ge \boldsymbol{b}_i \}$ is nonempty, then $\{ \boldsymbol{x} \in \boldsymbol{R}^n \,|\, A_i \boldsymbol{x} \ge \boldsymbol{0} \}$ is its recession cone.
Since the emptiness of $\{ \boldsymbol{x} \in \boldsymbol{R}^n \,|\, A_i \boldsymbol{x} \ge \boldsymbol{b}_i \}$ does not imply that of $\{ \boldsymbol{x} \in \boldsymbol{R}^n \,|\, A_i \boldsymbol{x} \ge \boldsymbol{0} \}$, the congruence variety $\boldsymbol{V}(T_{\boldsymbol{B}})$ may not be the recession fan of $\boldsymbol{V}(C)$.
See also the following examples.
And note here that $\boldsymbol{V}((f^{\prime}_{\boldsymbol{B}}, g^{\prime}_{\boldsymbol{B}}))$ contains the zero vector $\boldsymbol{0} \in \boldsymbol{R}^n$, so it is nonempty whenever $(f^{\prime}, g^{\prime})$ is any pair of tropical Laurent polynomials such that one of $f$ and $g$ is $-\infty$ and the other is not $-\infty$.
Thus by Remark~\ref{rem1}, every proper congruence $C$ on $\boldsymbol{T}[X_1^{\pm}, \ldots, X_n^{\pm}]$, the congruence variety $\boldsymbol{V}(C_{\boldsymbol{B}})$ contains the origin $\boldsymbol{0}$, where $C_{\boldsymbol{B}} \colonequals \langle \{ (f^{\prime}_{\boldsymbol{B}}, g^{\prime}_{\boldsymbol{B}}) \,|\, (f^{\prime}, g^{\prime}) \in C \} \rangle$ again.
}
\end{rem}

The following lemma is clear by \cite[Corollary~2.12]{Bertram=Easton}:

\begin{lemma}
	\label{lem2}
Let $T$ be a finite subset of $\boldsymbol{T}[X_1^{\pm}, \ldots, X_n^{\pm}]^2$ and let $T_{\boldsymbol{B}}$ denote the set $\{ (f_{\boldsymbol{B}}, g_{\boldsymbol{B}}) \,|\, (f, g) \in T \}$.
If $C$ is the congruence $\langle T \rangle$ on $\boldsymbol{T}[X_1^{\pm}, \ldots, X_n^{\pm}]$ and $C_{\boldsymbol{B}}$ the congruence $\langle \{ (f_{\boldsymbol{B}}, g_{\boldsymbol{B}}) \,|\, (f, g) \in C \} \rangle$ on $\boldsymbol{T}[X_1^{\pm}, \ldots, X_n^{\pm}]$, then the equality $C_{\boldsymbol{B}} = \langle T_{\boldsymbol{B}} \rangle_{\boldsymbol{T}[X_1^{\pm}, \ldots, X_n^{\pm}]}$ holds.
The same thing is true in the case that $T$ is a finite subset of $\overline{\boldsymbol{T}(X_1, \ldots, X_n)}$.
\end{lemma}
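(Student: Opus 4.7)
The plan is to verify both inclusions $\langle T_{\boldsymbol{B}} \rangle \subset C_{\boldsymbol{B}}$ and $C_{\boldsymbol{B}} \subset \langle T_{\boldsymbol{B}} \rangle$ separately. The first is immediate: since $T \subset C$, each generator $(f_{\boldsymbol{B}}, g_{\boldsymbol{B}})$ of $\langle T_{\boldsymbol{B}} \rangle$ already appears among the generators $\{(f_{\boldsymbol{B}}, g_{\boldsymbol{B}}) \mid (f, g) \in C\}$ of $C_{\boldsymbol{B}}$, so the inclusion follows from the monotonicity of congruence closure.

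For the converse, I would exploit the fact that the assignment $\beta \colon f \mapsto f_{\boldsymbol{B}}$ is a $\boldsymbol{B}$-algebra endomorphism of $\boldsymbol{T}[X_1^{\pm}, \ldots, X_n^{\pm}]$, being the composition of the coefficient-truncation homomorphism $\boldsymbol{T} \twoheadrightarrow \boldsymbol{B}$ (which is a $\boldsymbol{B}$-algebra homomorphism by construction, as recalled in Subsection~\ref{subsection2.5}) with the natural inclusion $\boldsymbol{B} \hookrightarrow \boldsymbol{T}$, extended coefficient-wise to Laurent polynomials. It suffices to show that for every $(f, g) \in \langle T \rangle$ one has $(f_{\boldsymbol{B}}, g_{\boldsymbol{B}}) \in \langle T_{\boldsymbol{B}} \rangle$, because then the entire defining set of generators of $C_{\boldsymbol{B}}$ lies inside $\langle T_{\boldsymbol{B}} \rangle$. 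I would invoke \cite[Corollary~2.12]{Bertram=Easton}, which gives an explicit description of pairs in $\langle T \rangle$ as being linked to $T$ by a finite chain of elementary moves, namely reflexivity, symmetry, transitivity, and closure under addition and multiplication by common elements. Applying $\beta$ coordinatewise to every pair in such a chain yields a corresponding chain between $f_{\boldsymbol{B}}$ and $g_{\boldsymbol{B}}$: since $\beta$ commutes with $\oplus$ and $\odot$, each elementary move is mapped to a move of exactly the same type, and each starting pair $(p, q) \in T$ used in the chain is mapped to $(p_{\boldsymbol{B}}, q_{\boldsymbol{B}}) \in T_{\boldsymbol{B}}$. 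Hence $(f_{\boldsymbol{B}}, g_{\boldsymbol{B}}) \in \langle T_{\boldsymbol{B}} \rangle$, as desired.

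The only real hurdle is confirming that the chain characterization from \cite[Corollary~2.12]{Bertram=Easton} applies in the Laurent polynomial setting (and afterwards in the rational function setting), but since that result is a purely formal statement about congruence closure in a commutative semiring, it transfers verbatim. For the case where $T \subset \overline{\boldsymbol{T}(X_1, \ldots, X_n)}^2$, the same argument goes through after replacing $\beta$ by its extension to $\overline{\boldsymbol{T}(X_1, \ldots, X_n)}$ described in Subsection~\ref{subsection2.5}; this extension is likewise a $\boldsymbol{B}$-algebra endomorphism and sends $T$ into $T_{\boldsymbol{B}}$, so applying it to a chain witnessing $(f, g) \in \langle T \rangle$ again produces a chain witnessing $(f_{\boldsymbol{B}}, g_{\boldsymbol{B}}) \in \langle T_{\boldsymbol{B}} \rangle$.
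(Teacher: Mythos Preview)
Your proposal is correct and follows essentially the same approach as the paper, which simply states that the lemma is clear by \cite[Corollary~2.12]{Bertram=Easton}; you have just spelled out in detail the two inclusions and how the semiring endomorphism $f \mapsto f_{\boldsymbol{B}}$ transports a chain witnessing $(f,g) \in \langle T\rangle$ to one witnessing $(f_{\boldsymbol{B}}, g_{\boldsymbol{B}}) \in \langle T_{\boldsymbol{B}}\rangle$.
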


\begin{ex}
    \label{ex1}
\upshape{
Let $T \colonequals \{ ( f \colonequals X_1 \oplus 3 \odot X_2^{\odot 2}, g \colonequals 2 \odot X_1^{\odot 2} \oplus X_2) \} \subset \boldsymbol{T}[X_1^{\pm}, X_2^{\pm}]^2$.
It is a finite congruence tropical basis of the congruence $\langle T \rangle$.
For $\boldsymbol{y} \in \boldsymbol{R}^2$,
\begin{align*}
    A_1 \colonequals \begin{pmatrix} 1 & -2 \\ 2 & -1 \\ 1-2 & 0 \\ 2-1 & 0 \end{pmatrix} = \begin{pmatrix} 1 & -2 \\ 2 & -1 \\ -1 & 0 \\ 1 & 0 \end{pmatrix} \text{ and } \boldsymbol{b}_1 \colonequals \begin{pmatrix} 3 \\ -2 \\ 2 \\ -2 \end{pmatrix},
\end{align*}
the $\boldsymbol{R}$-rational polyhedral set $\{ \boldsymbol{x} \in \boldsymbol{R}^2 \,|\, A_1 \boldsymbol{x} \ge \boldsymbol{b}_1 \}$ contains $\boldsymbol{y}$ if and only if $f = X_1$, $g = 2 \odot X_1^{\odot 2}$ and $f = g$ hold at $\boldsymbol{y}$.
In the same way, $f$ and $g$ define each three more matrices $A_2, A_3, A_4$ with entries in $\boldsymbol{Z}$ and vectors $\boldsymbol{b}_2, \boldsymbol{b}_3, \boldsymbol{b}_4$ such that
\begin{align*}
    \boldsymbol{V}(T) = \bigcup_{i = 1}^4 \{ \boldsymbol{x} \in \boldsymbol{R}^2 \,|\, A_i \boldsymbol{x} \ge \boldsymbol{b}_i \}.
\end{align*}
Note that each of $\{ \boldsymbol{x} \in \boldsymbol{R}^2 \,|\, A_i \boldsymbol{x} \ge \boldsymbol{b}_i \}$ is a ray.
Similarly, for $T_{\boldsymbol{B}} \colonequals \{ (f_{\boldsymbol{B}}, g_{\boldsymbol{B}}) \}$, the congruence variety $\boldsymbol{V}(T_{\boldsymbol{B}})$ is the recession fan of $\boldsymbol{V}(C)$, i.e., the finite union of the recession cones $\bigcup_{i = 1}^4 \{ \boldsymbol{x} \in \boldsymbol{R}^2 \,|\, A_i \boldsymbol{x} \ge \boldsymbol{0} \}$.
Thus we have $\operatorname{dim}\boldsymbol{V}(T) = \operatorname{dim}\boldsymbol{V}(T_{\boldsymbol{B}}) = 1$.
}
\end{ex}

\begin{ex}
	\label{ex2}
\upshape{
Let $T \colonequals \{ (f \colonequals (-1) \odot X \oplus X^{\odot (-1)} \oplus 0, g \colonequals 0) \} \subset \boldsymbol{T}[X^{\pm}]^2$.
It is easy to check that $\boldsymbol{V}(T) = [0, 1] \subset \boldsymbol{R}$ and $\boldsymbol{V}(T_{\boldsymbol{B}}) = \{ 0 \} \subset \boldsymbol{R}$ for $T_{\boldsymbol{B}} \colonequals \{ (f_{\boldsymbol{B}}, g_{\boldsymbol{B}}) \}$.
Hence we have $\operatorname{dim}\boldsymbol{V}(T) = 1$ and $\operatorname{dim}\boldsymbol{V}(T_{\boldsymbol{B}}) = 0$.
}
\end{ex}

\begin{ex}
	\label{ex3}
\upshape{
Since $\boldsymbol{V}((X_1 \odot X_2^{\odot (-1)} \oplus 0, 0)) = \{ \t(x, y, z) \in \boldsymbol{R}^3 \,|\, x \le y \}$ and $\boldsymbol{V}((X_1^{\odot (-1)} \odot X_2 \oplus 0, 0)) = \{ \t(x, y, z) \in \boldsymbol{R}^3 \,|\, x \ge y \}$, by \cite[Lemma~3.9]{JuAe7}, we have $\boldsymbol{V}((X_1 \odot X_2^{\odot (-1)} \oplus X_1^{\odot (-1)} \odot X_2 \oplus 0, 0)) = \boldsymbol{V}((X_1 \odot X_2^{\odot (-1)} \oplus 0, 0)) \cap \boldsymbol{V}((X_1^{\odot (-1)} \odot X_2 \oplus 0, 0)) = \{ \t(x, x, z) \in \boldsymbol{R}^3 \}$.
By the same argument, we have $\boldsymbol{V}((f_1, 0)) = \{ \t(x, x, x) \in \boldsymbol{R}^3 \,|\, x \ge 0 \}$ with
\begin{align*}
f_1 \colonequals&~ X_1 \odot X_2^{\odot (-1)} \oplus X_1^{\odot (-1)} \odot X_2\\&~\oplus X_2 \odot X_3^{\odot (-1)} \oplus X_2^{\odot (-1)} \odot X_3 \oplus X_1^{\odot (-1)} \oplus 0.
\end{align*}
Similarly $\boldsymbol{V}((f_2, 0)) = \{ \t(x, 0, z) \in \boldsymbol{R}^3 \,|\, x \le 0, z \le 0 \}$ and $\boldsymbol{V}((f_3, 0)) = \{ \t(0, y, z) \in \boldsymbol{R}^3 \,|\, y \le 0, z \le 0 \}$ hold for
\begin{align*}
f_2 \colonequals&~ X_1 \oplus X_2 \oplus X_3 \oplus X_2^{\odot (-1)} \oplus 0,\\
f_3 \colonequals&~ X_1 \oplus X_2 \oplus X_3 \oplus X_1^{\odot (-1)} \oplus 0.
\end{align*}
By \cite[Lemma~3.10]{JuAe7}, $\boldsymbol{V}\left( \left( \left( f_2^{\odot (-1)} \oplus f_3^{\odot (-1)}\right)^{\odot (-1)}, 0 \right)\right) = \boldsymbol{V}((f_2 \oplus f_3, f_2 \odot f_3)) = \boldsymbol{V}((f_2, 0)) \cup \boldsymbol{V}((f_3, 0))$ hold.
Hence we have
\begin{align*}
&~ \boldsymbol{V} \left( \left( \left( f_1^{\odot (-1)} \oplus f_2^{\odot (-1)} \oplus f_3^{\odot (-1)} \right)^{\odot (-1)}, 0 \right) \right)\\
=&~ \boldsymbol{V}((f_2 \odot f_3 \oplus f_1 \odot f_3 \oplus f_1 \odot f_2, f_1 \odot f_2 \odot f_3))\\
=&~ \bigcup_{i = 1}^3 \boldsymbol{V}((f_i, 0))
\end{align*}
by \cite[Lemma~3.10]{JuAe7} again.
Let $f \colonequals f_2 \odot f_3 \oplus f_1 \odot f_3 \oplus f_1 \odot f_2$ and $g \colonequals f_1 \odot f_2 \odot f_3$ and 
\begin{align*}
f^{\prime}(X_1, X_2, X_3) \colonequals&~ f(1 \odot X_1, 1 \odot X_2, 2 \odot X_3),\\
g^{\prime}(X_1, X_2, X_3) \colonequals&~ g(1 \odot X_1, 1 \odot X_2, 2 \odot X_3).
\end{align*}
Then $T \colonequals \{ (f, g), (f^{\prime}, g^{\prime}) \}$ is a finite congruence tropical basis of the congruence $\langle T \rangle$ and $\boldsymbol{V}(T) = \boldsymbol{V}((f, g)) \cap \boldsymbol{V}((f^{\prime}, g^{\prime})) = \{ (0, 0, -1) \}$ as $\boldsymbol{V}((f^{\prime}, g^{\prime}))$ is the translation of $\boldsymbol{V}((f, g))$ in the direction $\t (-1, -1, -2)$.
Since $f^{\prime}_{\boldsymbol{B}} = f_{\boldsymbol{B}} = f$ and $g^{\prime}_{\boldsymbol{B}} = g_{\boldsymbol{B}} = g$, we have $\boldsymbol{V}(T_{\boldsymbol{B}}) = \boldsymbol{V}((f_{\boldsymbol{B}}, g_{\boldsymbol{B}})) = \boldsymbol{V}((f, g))$, where $T_{\boldsymbol{B}} \colonequals \{ (f_{\boldsymbol{B}}, g_{\boldsymbol{B}}), (f^{\prime}_{\boldsymbol{B}}, g^{\prime}_{\boldsymbol{B}}) \}$.
Thus in this case $\operatorname{dim}\boldsymbol{V}(T) = 0$ and $\operatorname{dim}\boldsymbol{V}(T_{\boldsymbol{B}}) = 2$.
}
\end{ex}

Theorem~\ref{thm:main1-2} is one of our main theorems in this paper.
To prove this theorem, we prepare some lemmas.
The following two lemmas are clear:

\begin{lemma}
    \label{lem3}
Let $U$ be a t-admissible matrix.
For any $k$ and $\varepsilon > 0$, the matrix $U^{\prime}$ obtained from $U$ by multiplying its $k$-th row by $\varepsilon$ is t-admissible and the equality
\begin{align*}
P(U) = P(U^{\prime})
\end{align*}
holds.
\end{lemma}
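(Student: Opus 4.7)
The plan is to verify the two claims --- t-admissibility of $U^{\prime}$ and the equality $P(U) = P(U^{\prime})$ --- directly from the definitions. Both assertions reduce to the observation that multiplying a single row by a positive scalar $\varepsilon$ preserves the sign pattern (positive, zero, negative) of every entry of $U\boldsymbol{w}$ for any $\boldsymbol{w} \in \boldsymbol{R} \times \boldsymbol{Z}^n$, and in particular preserves the position of the first nonzero entry.

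First I would check t-admissibility of $U^{\prime}$. For condition (1), the entry in position $k$ of the first column is scaled by $\varepsilon > 0$, so its sign is unchanged; hence the first column of $U^{\prime}$ is zero exactly when that of $U$ is, and otherwise the first nonzero entry remains in the same position with the same sign. For condition (2), irredundancy of the $i$-th row of $U^{\prime}$ amounts to exhibiting $\boldsymbol{w} = \t(c, \boldsymbol{n}) \in \boldsymbol{R} \times \boldsymbol{Z}^n$ such that the first nonzero entry of $U^{\prime}\boldsymbol{w}$ is in position $i$. But since $U^{\prime}\boldsymbol{w}$ agrees with $U\boldsymbol{w}$ in every position except $k$, where it is scaled by $\varepsilon > 0$, the first nonzero entry of $U^{\prime}\boldsymbol{w}$ is in the same position as that of $U\boldsymbol{w}$. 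Irredundancy therefore transfers directly from $U$ to $U^{\prime}$.

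Next, for $P(U) = P(U^{\prime})$, I would apply the same observation to the generating set. Recall that $P(U)$ is generated by the pairs $\bigl(c_1 \odot \boldsymbol{X}^{\odot \boldsymbol{n}_1} \oplus c_2 \odot \boldsymbol{X}^{\odot \boldsymbol{n}_2},\, c_1 \odot \boldsymbol{X}^{\odot \boldsymbol{n}_1}\bigr)$ for which $U\,\t(c_1 - c_2,\, \boldsymbol{n}_1 - \boldsymbol{n}_2)$ is zero or has its first nonzero entry positive. Setting $\boldsymbol{v} \colonequals \t(c_1 - c_2,\, \boldsymbol{n}_1 - \boldsymbol{n}_2)$, the vector $U^{\prime}\boldsymbol{v}$ differs from $U\boldsymbol{v}$ only in position $k$, where it is multiplied by $\varepsilon > 0$; so $U\boldsymbol{v}$ satisfies the sign condition if and only if $U^{\prime}\boldsymbol{v}$ does. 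The two generating sets therefore coincide, giving $P(U) = P(U^{\prime})$.

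There is essentially no obstacle: the entire argument is a single sign-preservation observation applied in two places. The only point requiring care is to consistently use that $\varepsilon$ is strictly positive, so that both the position and the sign of every nonzero entry of the $k$-th coordinate function are preserved.
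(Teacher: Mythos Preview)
Your argument is correct: the sign-preservation observation is precisely what makes this lemma immediate, and your verification of both t-admissibility conditions and the coincidence of generating sets is sound. The paper itself offers no proof at all---it simply declares this lemma (together with the next one) to be ``clear''---so your write-up is a faithful elaboration of what the authors leave implicit.
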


\begin{lemma}
    \label{lemm4}
Let $U$ be a t-admissible matrix.
For any $k$ and $j > k$, the matrix $U^{\prime}$ obtained from $U$ by adding its $k$-th row multiplied by a real number to its $j$-th row is t-admissible and the equality
\begin{align*}
P(U) = P(U^{\prime})
\end{align*}
holds.
\end{lemma}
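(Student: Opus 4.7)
The plan is to track how the row operation affects $U\boldsymbol{v}$ for a vector $\boldsymbol{v} \in \boldsymbol{R} \times \boldsymbol{Z}^n$, and to show that both the t-admissibility of $U^{\prime}$ and the defining condition of $P(U^{\prime})$ reduce to the corresponding statements for $U$. Write $\alpha \in \boldsymbol{R}$ for the scalar used in the row operation, so that $(U^{\prime}\boldsymbol{v})_i = (U\boldsymbol{v})_i$ for every $i \neq j$ and $(U^{\prime}\boldsymbol{v})_j = (U\boldsymbol{v})_j + \alpha (U\boldsymbol{v})_k$. The central observation is that since $k < j$, whenever $(U\boldsymbol{v})_k \neq 0$ the decision whether $U\boldsymbol{v}$ is $\boldsymbol{0}$ or has its first nonzero entry positive is already determined at or before the $k$-th row (and so cannot be disturbed by a change at row $j$), whereas if all entries of $U\boldsymbol{v}$ up through index $k$ vanish, then the added term $\alpha(U\boldsymbol{v})_k$ is zero and the $j$-th entry is also unchanged.

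I would then make this precise by a brief case analysis on the position $i_0$ of the first nonzero entry of $U\boldsymbol{v}$ (the cases $i_0 < k$, $i_0 = k$, $k < i_0 < j$, $i_0 = j$, $i_0 > j$, together with $U\boldsymbol{v} = \boldsymbol{0}$), concluding in each case that $U\boldsymbol{v}$ is $\boldsymbol{0}$ or has positive first nonzero entry if and only if $U^{\prime}\boldsymbol{v}$ has the same property, and moreover that the position of the first nonzero entry is preserved. Applied to $\boldsymbol{v} = \t(1, 0, \ldots, 0) \in \boldsymbol{R} \times \boldsymbol{Z}^n$, this equivalence immediately yields condition~(1) in the definition of t-admissibility for $U^{\prime}$, since the first columns of $U$ and $U^{\prime}$ are $U\boldsymbol{v}$ and $U^{\prime}\boldsymbol{v}$ respectively.

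For condition~(2), I would take the irredundancy witnesses $\boldsymbol{v}_1, \ldots, \boldsymbol{v}_l$ supplied by t-admissibility of $U$ and check that they also witness irredundancy of $U^{\prime}$. For $i \neq j$ this is immediate from the preservation of the first-nonzero position above. For $i = j$, the witness $\boldsymbol{v}_j$ satisfies $(U\boldsymbol{v}_j)_r = 0$ for every $r < j$, in particular $(U\boldsymbol{v}_j)_k = 0$, so $(U^{\prime}\boldsymbol{v}_j)_j = (U\boldsymbol{v}_j)_j \neq 0$ while the entries above remain zero. Finally, the equality $P(U) = P(U^{\prime})$ follows because the equivalence established above identifies, as subsets of $\boldsymbol{T}[X_1^{\pm}, \ldots, X_n^{\pm}]^2$, the generating families of $P(U)$ and $P(U^{\prime})$ prescribed in the paragraph preceding Theorem~\ref{thm:prime1}.

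The whole argument is essentially bookkeeping; the one point where genuine content is used is the hypothesis $j > k$, which ensures that the row being modified lies \emph{below} the row whose multiple we are adding and therefore cannot interfere with the lexicographic-type evaluation of the sign of the first nonzero entry of $U\boldsymbol{v}$. If $k > j$ were allowed, $(U\boldsymbol{v})_k$ could already be nonzero while the first $j-1$ rows vanish, causing the sign at row $j$ to be altered by the row operation and breaking the equality of congruences.
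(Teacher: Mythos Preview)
Your argument is correct; the paper states this lemma (together with the preceding one) as ``clear'' and supplies no proof, so there is nothing to compare against beyond noting that your case analysis is exactly the natural unpacking of that claim. The key point you isolate---that $j>k$ forces any change to occur strictly below the row whose vanishing or sign has already decided the lexicographic outcome---is the only substantive observation needed, and your verification of both t-admissibility conditions and of the equality of generating sets for $P(U)$ and $P(U')$ is complete.
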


For a while, fix
\begin{align*}
U \colonequals \begin{pmatrix} 0 & \vec{w}_1 \\  \vdots & \vdots \\ 0 & \vec{w}_{k- 2} \\ 0 & \vec{w}_{k - 1} \\ 1 & \vec{w}_k \\ a_{k + 1} & \vec{w}_{k + 1} \\ \vdots & \vdots \\ a_l & \vec{w}_l
\end{pmatrix}
\quad \text{and} \quad
U_{\varepsilon} \colonequals \begin{pmatrix} 0 & \vec{w}_1 \\ \vdots & \vdots \\ 0 & \vec{w}_{k - 2} \\ \varepsilon & \vec{w}_{k - 1} + \varepsilon \vec{w}_k \\ 1 & \vec{w}_k  \\ a_{k + 1} & \vec{w}_{k + 1} \\ \vdots & \vdots \\ a_l & \vec{w}_l
\end{pmatrix}
\end{align*}
for $a_i \in \boldsymbol{R}$, $\t \vec{w}_i \in \boldsymbol{R}^n$ and $\varepsilon > 0$ and assume that $U$ is t-admissible.

\begin{lemma}
	\label{lem:admissible1}
In the above setting, the matrix $U_{\varepsilon}$ is t-admissible for any $\varepsilon > 0$.
\end{lemma}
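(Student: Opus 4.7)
The plan is to verify the two defining properties of t-admissibility for $U_\varepsilon$ directly, using the corresponding properties of $U$ in almost all cases and handling only rows $k-1$ and $k$ by hand. Condition~(1) is immediate: the first column of $U_\varepsilon$ is $\t(0, \ldots, 0, \varepsilon, 1, a_{k+1}, \ldots, a_l)$, whose first nonzero entry is $\varepsilon > 0$. The real work is condition~(2), irredundancy of each row on $\boldsymbol{R} \times \boldsymbol{Z}^n$.

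I would split the irredundancy check into four cases. For rows $i = 1, \ldots, k-2$, since $U$ and $U_\varepsilon$ agree on the first $k-2$ rows, any witness $(c, \boldsymbol{n}) \in \boldsymbol{R} \times \boldsymbol{Z}^n$ that works for row $i$ of $U$ serves for $U_\varepsilon$ too. For rows $i = k+1, \ldots, l$, I would take a witness $(c_0, \boldsymbol{n}_0)$ of row $i$ in $U$. By assumption the first $i - 1 \ge k$ entries of $U \t(c_0,\boldsymbol{n}_0)$ vanish, so in particular $\vec{w}_j \boldsymbol{n}_0 = 0$ for $j \le k-1$ and $c_0 + \vec{w}_k \boldsymbol{n}_0 = 0$; the $(k-1)$-th entry of $U_\varepsilon \t(c_0,\boldsymbol{n}_0)$ is $\vec{w}_{k-1}\boldsymbol{n}_0 + \varepsilon(c_0 + \vec{w}_k \boldsymbol{n}_0) = 0$, while the other changed/unchanged entries trivially agree with those of $U$, so $(c_0, \boldsymbol{n}_0)$ is again a witness.

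The interesting cases are rows $k-1$ and $k$, and here is where the main observation lives. Starting from any witness $(c', \boldsymbol{n}')$ of row $k-1$ in $U$ (so $\vec{w}_j \boldsymbol{n}' = 0$ for $j \le k-2$ and $\vec{w}_{k-1}\boldsymbol{n}' \ne 0$), the free parameter $c'$ plays no role for $U$ (its first $k-1$ rows begin with $0$), but it does for $U_\varepsilon$. For row $k-1$ of $U_\varepsilon$, I would choose $c = -\vec{w}_k \boldsymbol{n}'$; then the $(k-1)$-th entry of $U_\varepsilon \t(c, \boldsymbol{n}')$ reduces to $\vec{w}_{k-1}\boldsymbol{n}' \ne 0$, giving irredundancy. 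For row $k$ of $U_\varepsilon$, using the same $\boldsymbol{n}'$ I would set $c = -\vec{w}_k \boldsymbol{n}' - \vec{w}_{k-1}\boldsymbol{n}' / \varepsilon$; a direct calculation shows entries $1, \ldots, k-2$ vanish, the $(k-1)$-th entry becomes $\vec{w}_{k-1}\boldsymbol{n}' + \varepsilon(c + \vec{w}_k \boldsymbol{n}') = 0$, and the $k$-th entry equals $c + \vec{w}_k \boldsymbol{n}' = -\vec{w}_{k-1}\boldsymbol{n}'/\varepsilon \ne 0$.

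The main (and really the only) obstacle is spotting the correct witness for row $k$ of $U_\varepsilon$: one cannot simply reuse the witness of row $k$ in $U$, because the modified $(k-1)$-th row no longer vanishes on it. The trick is to recycle the row-$(k-1)$ witness $\boldsymbol{n}'$ of $U$ and exploit the extra freedom in $c$ to cancel the $(k-1)$-th component of $U_\varepsilon$; the factor $1/\varepsilon$ appearing in the chosen $c$ is harmless because $c$ ranges over $\boldsymbol{R}$, not over a lattice. Once this witness is identified, the remaining cases are routine bookkeeping.
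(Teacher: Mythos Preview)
Your proof is correct and follows essentially the same approach as the paper's: both verify irredundancy of row $k$ by exploiting the free real parameter $c$ to force the $(k-1)$-th entry of $U_\varepsilon\,\t(c,\boldsymbol{n})$ to vanish (picking up the $1/\varepsilon$ factor), after which the $k$-th entry is automatically $-\vec{w}_{k-1}\boldsymbol{n}/\varepsilon \ne 0$. Your version is in fact slightly more economical than the paper's---you build the row-$k$ witness from the row-$(k-1)$ witness $\boldsymbol{n}'$ alone, whereas the paper combines witnesses $\boldsymbol{n}_1$ and $\boldsymbol{n}_2$ for rows $k-1$ and $k$ of $U$ and then observes that the contribution of $\boldsymbol{n}_2$ effectively drops out---and you also spell out the cases $i\le k-2$, $i=k-1$, and $i\ge k+1$ that the paper dismisses in one sentence.
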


\begin{proof}
Since $U$ is t-admissible, any row except for the $k$-th row of $U_{\varepsilon}$ is irredundant for any $\varepsilon > 0$ and there exist $c_1, c_2 \in \boldsymbol{R}$ and $\boldsymbol{n}_1, \boldsymbol{n}_2 \in \boldsymbol{Z}^n$ such that for any $1 \le i \le k - 2$, $1 \le j \le k - 1$,
\begin{align*}
&\begin{pmatrix} 0 & \vec{w}_i \end{pmatrix} \begin{pmatrix} c_1 \\ \boldsymbol{n}_1\end{pmatrix} = 0 \quad \text{and}
\quad \begin{pmatrix} 0 & \vec{w}_{k - 1} \end{pmatrix} \begin{pmatrix} c_1 \\ \boldsymbol{n}_1\end{pmatrix} \not= 0,\\
&\begin{pmatrix} 0 & \vec{w}_j \end{pmatrix}\begin{pmatrix} c_2 \\ \boldsymbol{n}_2 \end{pmatrix} = 0 \quad \text{and} \quad \begin{pmatrix} 1 & \vec{w}_k \end{pmatrix} \begin{pmatrix} c_2 \\ \boldsymbol{n}_2 \end{pmatrix} \not= 0
\end{align*}
hold.
For such any $c_1, c_2, \boldsymbol{n}_1, \boldsymbol{n}_2$ and any $1 \le i \le k - 2$,
\begin{align*}
    \begin{pmatrix} 0 & \vec{w}_i \end{pmatrix} \begin{pmatrix} c_1 + c_2 \\ \boldsymbol{n}_1 + \boldsymbol{n}_2 \end{pmatrix} = 0
\end{align*}
holds.
As the conditions above say nothing for $c_1$, taking $c_1$ properly makes
\begin{align*}
&~\begin{pmatrix} \varepsilon & \vec{w}_{k - 1} + \varepsilon \vec{w}_k \end{pmatrix} \begin{pmatrix} c_1 + c_2 \\ \boldsymbol{n}_1 + \boldsymbol{n}_2 \end{pmatrix}\\
=&~ \left( \begin{pmatrix} 0 & \vec{w}_{k - 1}\end{pmatrix} + \varepsilon \begin{pmatrix} 1 & \vec{w}_k \end{pmatrix} \right) \left( \begin{pmatrix} c_1 \\ \boldsymbol{n}_1 \end{pmatrix} + \begin{pmatrix} c_2 \\ \boldsymbol{n}_2 \end{pmatrix} \right)\\
=&~ \begin{pmatrix} 0 & \vec{w}_{k - 1} \end{pmatrix} \begin{pmatrix} c_1 \\ \boldsymbol{n}_1 \end{pmatrix} + \begin{pmatrix} 0 & \vec{w}_{k - 1} \end{pmatrix} \begin{pmatrix} c_2 \\ \boldsymbol{n}_2 \end{pmatrix} + \varepsilon \begin{pmatrix} 1 & \vec{w}_k \end{pmatrix} \begin{pmatrix} c_1 + c_2 \\ \boldsymbol{n}_1 + \boldsymbol{n}_2 \end{pmatrix}\\
=&~ \begin{pmatrix} 0 & \vec{w}_{k - 1} \end{pmatrix} \begin{pmatrix} c_1 \\ \boldsymbol{n}_1 \end{pmatrix} + \varepsilon \begin{pmatrix} 1 & \vec{w}_k \end{pmatrix} \begin{pmatrix} c_1 + c_2 \\ \boldsymbol{n}_1 + \boldsymbol{n}_2 \end{pmatrix}
\end{align*}
be zero, i.e., the value $c_1$ can be taken as
\begin{align*}
    c_1 \colonequals - \frac{\vec{w}_{k - 1} \boldsymbol{n}_1}{\varepsilon} - c_2 - \vec{w}_k (\boldsymbol{n}_1 + \boldsymbol{n}_2)
\end{align*}
for each $c_2, \boldsymbol{n}_1, \boldsymbol{n}_2$ and $\varepsilon$ above.
For such $c_1$, since $\begin{pmatrix} 0 & \vec{w}_{k - 1} \end{pmatrix} \begin{pmatrix} c_1 \\ \boldsymbol{n}_1 \end{pmatrix} \not= 0$, the value $\begin{pmatrix} 1 & \vec{w}_k \end{pmatrix} \begin{pmatrix} c_1 + c_2 \\ \boldsymbol{n}_1 + \boldsymbol{n}_2 \end{pmatrix}$ is never zero.
Hence $U_{\varepsilon}$ is t-admissible.
\end{proof}

\begin{lemma}
    \label{lem:admissible2}
If $(f, g)$ is in $P(U)$, then there exists $\varepsilon > 0$ such that $U_{\delta}$ is t-admissible and $(f, g)$ is in $P(U_{\delta})$ for any $0 < \delta \le \varepsilon$.
\end{lemma}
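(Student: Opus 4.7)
The plan is to use the prime-ness of $P(U)$ to reduce the claim to numerical conditions on vectors of the form $Uv$ that transfer cleanly to $U_\delta v$. Since $P(U)$ is prime, by Remark~\ref{rem3} every tropical Laurent polynomial is equivalent modulo $P(U)$ to one of its terms. I may assume $f, g \neq -\infty$ (otherwise $f = g = -\infty$ by Remark~\ref{rem1}, and the claim is trivial), and pick terms $m_f = a \odot \boldsymbol{X}^{\odot \boldsymbol{p}}$ of $f$ and $m_g = b \odot \boldsymbol{X}^{\odot \boldsymbol{q}}$ of $g$ such that $(f, m_f), (g, m_g) \in P(U)$; combined with $(f, g) \in P(U)$, this also forces $(m_f, m_g) \in P(U)$. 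Unpacking these via the generator description of $P(U)$, I obtain the following data: for every term $m_i = a_i \odot \boldsymbol{X}^{\odot \boldsymbol{p}_i}$ of $f$, the vector $U v_i$ with $v_i \colonequals \begin{pmatrix} a - a_i \\ \boldsymbol{p} - \boldsymbol{p}_i \end{pmatrix}$ is $\boldsymbol{0}$ or has positive first nonzero entry; the analogous statement for each term of $g$; and $U w = \boldsymbol{0}$ for $w \colonequals \begin{pmatrix} a - b \\ \boldsymbol{p} - \boldsymbol{q} \end{pmatrix}$.

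The key observation is that $U_\delta - U$ has all rows zero except its $(k-1)$-th, which equals $\delta$ times the $k$-th row of $U$. Consequently, for any vector $v$ one has $(U_\delta v)_i = (Uv)_i$ for $i \neq k-1$ and $(U_\delta v)_{k-1} = (Uv)_{k-1} + \delta (Uv)_k$. In particular $U w = \boldsymbol{0}$ immediately gives $U_\delta w = \boldsymbol{0}$. For each $v_i$ (and analogously for $g$) I would case-split on the position $r$ of the first nonzero entry of $U v_i$, treating $U v_i = \boldsymbol{0}$ separately (then $U_\delta v_i = \boldsymbol{0}$ also). When $r \leq k - 2$ or $r > k$, the first nonzero entries of $U v_i$ and $U_\delta v_i$ agree. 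When $r = k$ we have $(U v_i)_{k-1} = 0$, so $(U_\delta v_i)_{k-1} = \delta (U v_i)_k > 0$, putting the first nonzero entry of $U_\delta v_i$ at position $k - 1$. When $r = k - 1$, the entry $(U v_i)_{k-1} + \delta (U v_i)_k$ stays positive as long as $\delta < (U v_i)_{k-1}/|(U v_i)_k|$ (or unconditionally if $(U v_i)_k \geq 0$).

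Since $f$ and $g$ together contribute only finitely many vectors $v_i$, this case analysis imposes only finitely many positive upper bounds on $\delta$. Taking $\varepsilon$ smaller than all of them, Lemma~\ref{lem:admissible1} ensures $U_\delta$ is t-admissible for every $0 < \delta \leq \varepsilon$, and the translated conditions prove $(f, m_f), (g, m_g), (m_f, m_g) \in P(U_\delta)$, whence $(f, g) \in P(U_\delta)$ by transitivity. The only subtle sub-case is $r = k - 1$, where a genuine competition between $(Uv_i)_{k-1}$ and $\delta(Uv_i)_k$ arises; this is precisely what forces the qualifier ``sufficiently small'' on $\varepsilon$, whereas the remaining sub-cases hold uniformly for all $\delta > 0$.
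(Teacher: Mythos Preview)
Your proof is correct and follows essentially the same route as the paper: both arguments hinge on the observation that the $(k-1)$-th row of $U_\delta$ is the $(k-1)$-th row of $U$ plus $\delta$ times the $k$-th row, and both proceed by a case split on the position of the first nonzero entry of $Uv$ for the finitely many comparison vectors coming from terms of $f$ and $g$. The paper organizes this via the sets $M_{U(i)}(f)$ of maximum terms and proves the slightly stronger statement $M_U(f) = M_{U_\delta}(f)$ for small $\delta$, whereas you fix a single maximum term $m_f$ and verify directly that it remains maximum; your version is a clean streamlining of the same idea.
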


\begin{proof}
Remark~\ref{rem3} implies that if there exists $\varepsilon > 0$ such that for any $0 < \delta \le \varepsilon$, the sets of maximum terms of $f$ and $g$ with respect to the term ordering defined by $P(U)$ coincides with those of $f$ and $g$ with respect to the term ordering defined by $P(U_{\delta})$ respectively, then the assertion holds.
For any nonnegative integer $i$ at most $\operatorname{rank}(U) = l$, let $M_{U(i)}(f)$ be the set of maximum terms of $f$ with respect to the term ordering defined by $P(U(i))$.
In the same way, $M_{U_{\varepsilon}(i)}(f)$ is defined for any $\varepsilon > 0$.
By definition, $M_{U(i)}(f) = M_{U_{\varepsilon}(i)}(f)$ holds for any $0 \le i \le k-2$ and there exist natural inclusions
\begin{align*}
M_{U(0)}(f) \supset M_{U(1)}(f) \supset \cdots \supset M_{U}(f)
\end{align*}
and
\begin{align*}
M_{U_{\varepsilon}(0)}(f) \supset M_{U_{\varepsilon}(1)}(f) \supset \cdots \supset M_{U_{\varepsilon}}(f).
\end{align*}
If there exists $\varepsilon > 0$ such that for any $0 < \delta \le \varepsilon$,
\begin{align*}
    M_{U(k)}(f) = M_{U_{\delta}(k)}(f),
\end{align*}
then, since $U$ and $U_{\delta}$ have the same rows from rows $k$,
\begin{align*}
    M_{U}(f) = M_{U_{\delta}}(f)
\end{align*}
holds for any $0 < \delta \le \varepsilon$, and hence $f$ has the same maximum terms with respect to the term orderings defined by $P(U)$ and $P(U_{\delta})$, respectively.

Assume that $M_{U(k)}(f) \cap M_{U_{\varepsilon}(k)}(f) \not= \varnothing$ and let $c \odot \boldsymbol{X}^{\odot \boldsymbol{n}} \in M_{U(k)}(f) \cap M_{U_{\varepsilon}(k)}(f)$.
For any $b \odot \boldsymbol{X}^{\odot \boldsymbol{m}} \in M_{U(k)}(f)$, the equalities
\begin{align*}
    \begin{pmatrix} 0 & \vec{w}_{k - 1} \end{pmatrix} \begin{pmatrix} b \\ \boldsymbol{m} \end{pmatrix} &= \begin{pmatrix} 0 & \vec{w}_{k - 1} \end{pmatrix} \begin{pmatrix} c \\ \boldsymbol{n} \end{pmatrix},\\
    \begin{pmatrix} 1 & \vec{w}_k \end{pmatrix} \begin{pmatrix} b \\ \boldsymbol{m} \end{pmatrix} &=
    \begin{pmatrix} 1 & \vec{w}_k \end{pmatrix} \begin{pmatrix} c \\ \boldsymbol{n} \end{pmatrix}
\end{align*}
hold, and thus
\begin{align*}
    \begin{pmatrix} \varepsilon & \vec{w}_{k - 1} + \varepsilon \vec{w}_k \end{pmatrix} \begin{pmatrix} b \\ \boldsymbol{m} \end{pmatrix} = \begin{pmatrix} \varepsilon & \vec{w}_{k - 1} + \varepsilon \vec{w}_k \end{pmatrix} \begin{pmatrix} c \\ \boldsymbol{n} \end{pmatrix}
\end{align*}
is also true.
The tropical monomial $b \odot \boldsymbol{X}^{\odot \boldsymbol{m}}$ belongs to $M_{U_{\varepsilon}(k)}(f)$ since $c \odot \boldsymbol{X}^{\odot \boldsymbol{n}} \in M_{U(k)}(f) \cap M_{U_{\varepsilon}(k)}(f)$.
By the similar argument, if $a \odot \boldsymbol{X}^{\odot \boldsymbol{l}} \in M_{U_{\varepsilon}(k)}(f)$, then $a \odot \boldsymbol{X}^{\odot \boldsymbol{l}} \in M_{U(k)}(f)$.
Therefore $M_{U(k)}(f) \cap M_{U_{\varepsilon}(k)}(f) \not= \varnothing$ implies $M_{U(k)}(f) = M_{U_{\varepsilon}(k)}(f)$.

Assume that $M_{U(k)}(f) \cap M_{U_{\varepsilon}(k)}(f) = \varnothing$.
For any $b \odot \boldsymbol{X}^{\odot \boldsymbol{m}} \in M_{U(k)}(f)$ and $a \odot \boldsymbol{X}^{\odot \boldsymbol{l}} \in M_{U_{\varepsilon}(k)}(f)$, since $b \odot \boldsymbol{X}^{\odot \boldsymbol{m}} \not\in M_{U_{\varepsilon}(k)}(f)$, one of the following holds:
\begin{align}
    \begin{pmatrix} \varepsilon & \vec{w}_{k - 1} + \varepsilon \vec{w}_k \end{pmatrix} \begin{pmatrix} a \\ \boldsymbol{l} \end{pmatrix} > 
    \begin{pmatrix} \varepsilon & \vec{w}_{k - 1} + \varepsilon \vec{w}_k \end{pmatrix} \begin{pmatrix} b \\ \boldsymbol{m} \end{pmatrix} \text{ or } \label{condition1}
\end{align}
\begin{equation}
    \begin{split}
    \begin{pmatrix} \varepsilon & \vec{w}_{k - 1} + \varepsilon \vec{w}_k \end{pmatrix} \begin{pmatrix} a \\ \boldsymbol{l} \end{pmatrix} &= 
    \begin{pmatrix} \varepsilon & \vec{w}_{k - 1} + \varepsilon \vec{w}_k \end{pmatrix} \begin{pmatrix} b \\ \boldsymbol{m} \end{pmatrix} \text{ and} \\
    \begin{pmatrix} 1 & \vec{w}_k \end{pmatrix} \begin{pmatrix} a \\ \boldsymbol{l} \end{pmatrix} &>
    \begin{pmatrix} 1 & \vec{w}_k \end{pmatrix} \begin{pmatrix} b \\ \boldsymbol{m} \end{pmatrix}. \label{condition2}
    \end{split}
\end{equation}
Similarly, $a \odot \boldsymbol{X}^{\odot \boldsymbol{l}} \not\in M_{U(k)}(f)$ causes one of the following:
\begin{align}
    \begin{pmatrix} 0 & \vec{w}_{k - 1} \end{pmatrix} \begin{pmatrix} a \\ \boldsymbol{l} \end{pmatrix} < 
    \begin{pmatrix} 0 & \vec{w}_{k - 1} \end{pmatrix} \begin{pmatrix} b \\ \boldsymbol{m} \end{pmatrix} \text{ or } \label{condition3}
\end{align}
\begin{equation}
    \begin{split}
    \begin{pmatrix} 0 & \vec{w}_{k - 1} \end{pmatrix} \begin{pmatrix} a \\ \boldsymbol{l} \end{pmatrix} &= 
    \begin{pmatrix} 0 & \vec{w}_{k - 1} \end{pmatrix} \begin{pmatrix} b \\ \boldsymbol{m} \end{pmatrix} \text{ and}\\
    \begin{pmatrix} 1 & \vec{w}_k \end{pmatrix} \begin{pmatrix} a \\ \boldsymbol{l} \end{pmatrix} &<
    \begin{pmatrix} 1 & \vec{w}_k \end{pmatrix} \begin{pmatrix} b \\ \boldsymbol{m} \end{pmatrix}. \label{condition4}
    \end{split}
\end{equation}
By conditions, \eqref{condition4} cannot occur with \eqref{condition1} or \eqref{condition2} simultaneously.
Hence \eqref{condition3} must hold.
If both \eqref{condition1} and \eqref{condition3} are true, then $\begin{pmatrix} 1 & \vec{w}_k \end{pmatrix} \begin{pmatrix} a \\ \boldsymbol{l} \end{pmatrix} > \begin{pmatrix} 1 & \vec{w}_k \end{pmatrix} \begin{pmatrix} b \\ \boldsymbol{m} \end{pmatrix}$ and
\begin{align*}
    \varepsilon >
    \frac{\left\{ \begin{pmatrix} 0 & \vec{w}_{k - 1} \end{pmatrix} \begin{pmatrix} b \\ \boldsymbol{m} \end{pmatrix} -
    \begin{pmatrix} 0 & \vec{w}_{k - 1} \end{pmatrix} \begin{pmatrix} a \\ \boldsymbol{l} \end{pmatrix} \right\}}
    { \left\{ \begin{pmatrix} 1 & \vec{w}_k \end{pmatrix} \begin{pmatrix} a \\ \boldsymbol{l} \end{pmatrix} - \begin{pmatrix} 1 & \vec{w}_k \end{pmatrix} \begin{pmatrix} b \\ \boldsymbol{m} \end{pmatrix} \right\} } > 0
\end{align*}
follow.
When both \eqref{condition2} and \eqref{condition3} hold, the following
\begin{align*}
    \varepsilon =
    \frac{\left\{ \begin{pmatrix} 0 & \vec{w}_{k - 1} \end{pmatrix} \begin{pmatrix} b \\ \boldsymbol{m} \end{pmatrix} -
    \begin{pmatrix} 0 & \vec{w}_{k - 1} \end{pmatrix} \begin{pmatrix} a \\ \boldsymbol{l} \end{pmatrix} \right\}}
    { \left\{ \begin{pmatrix} 1 & \vec{w}_k \end{pmatrix} \begin{pmatrix} a \\ \boldsymbol{l} \end{pmatrix} - \begin{pmatrix} 1 & \vec{w}_k \end{pmatrix} \begin{pmatrix} b \\ \boldsymbol{m} \end{pmatrix} \right\} } > 0
\end{align*}
hold.
Both cases are dependent on only $U$ and $f$.
In conclusion, $M_{U(k)}(f) = M_{U_{\varepsilon}(k)}(f)$, and hence $M_U (f) = M_{U_{\varepsilon}} (f)$ hold for a sufficientely small $\varepsilon > 0$.
The same argument for $g$ gives $\varepsilon > 0$ such that $(f, g) \in P(U_{\varepsilon})$, and for such $\varepsilon >0$ and any $0 < \delta \le \varepsilon$, the element $(f, g)$ is contained in $P(U_{\delta})$.
\end{proof}

Since $\varepsilon > 0$ can be retaken finitely many times, Lemmas~\ref{lem3}, \ref{lem:admissible1} and \ref{lem:admissible2} induce the following two corollaries:

\begin{cor}
	\label{cor1}
If $(f, g)$ is in $P(U)$, then there exists a t-admissible matrix $U^{\prime}$ whose $(1, 1)$-entry is one and whose rank is $l$ and that satisfies $(f, g) \in P(U^{\prime})$.
\end{cor}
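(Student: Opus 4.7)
The strategy is to apply Lemmas~\ref{lem3}, \ref{lem:admissible1} and \ref{lem:admissible2} iteratively so as to push the first nonzero entry of the first column of $U$ upward from row $k$ to row $1$, after which a single final rescaling by Lemma~\ref{lem3} produces a $1$ in position $(1,1)$. Since Lemma~\ref{lem:admissible1} alters only the $(k-1)$-th row of the matrix (by adding $\varepsilon$ times row $k$ to row $k-1$ in an invertible way) and Lemma~\ref{lem3} multiplies a single row by a positive scalar, each individual operation is an invertible elementary row operation. Consequently the rank is preserved at $l$ throughout, and by Lemmas~\ref{lem:admissible1} and \ref{lem:admissible2} each operation preserves both t-admissibility and the property that $(f,g)$ lies in the associated prime congruence.

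The elementary step runs as follows. Suppose the current matrix $V$ is t-admissible of rank $l$, satisfies $(f,g)\in P(V)$, and has first column whose first nonzero entry sits in row $i\le k$, equals $1$, and is preceded by zeros. Apply Lemmas~\ref{lem:admissible1} and \ref{lem:admissible2} with $i$ playing the role of $k$ to obtain a positive number $\varepsilon>0$ and a t-admissible matrix $V_{\varepsilon}$ of rank $l$ with $(f,g)\in P(V_{\varepsilon})$, whose $(i-1,1)$-entry is $\varepsilon$, whose $i$-th row is unchanged (and still begins with $1$), and whose rows above row $i-1$ still begin with $0$. Then rescale row $i-1$ by $1/\varepsilon$ using Lemma~\ref{lem3}; the result is a t-admissible matrix of rank $l$, still containing $(f,g)$ in its prime congruence, whose first column has its first nonzero entry in row $i-1$, equal to $1$, and preceded by zeros. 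That is, the elementary step shifts the leading $1$ of the first column up by exactly one row, preserving every other required property.

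Iterating this step $k-1$ times, starting from $U$, produces a matrix $U^{\prime}$ whose $(1,1)$-entry is $1$, which is t-admissible, has rank $l$, and satisfies $(f,g)\in P(U^{\prime})$. The main concern is twofold and both points are easy. First, one must check that the structural hypothesis of Lemma~\ref{lem:admissible1} continues to hold at every iteration: all rows above the current ``row with a $1$'' must remain of the form $(0,\vec w)$, which is automatic because each step only modifies the row immediately above the distinguished one, and the rescaling in Lemma~\ref{lem3} leaves zero entries untouched. Second, one must check that only finitely many $\varepsilon$'s are required; but the iteration terminates after exactly $k-1$ steps, so Lemma~\ref{lem:admissible2} delivers successive positive $\varepsilon_1,\ldots,\varepsilon_{k-1}$ without any limiting subtlety, and the corollary follows.
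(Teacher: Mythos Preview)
Your proof is correct and follows exactly the approach the paper intends: the paper's own proof consists of the single sentence ``Since $\varepsilon > 0$ can be retaken finitely many times, Lemmas~\ref{lem3}, \ref{lem:admissible1} and \ref{lem:admissible2} induce the following two corollaries,'' and your argument spells out precisely this iteration, pushing the leading nonzero entry of the first column upward one row at a time via Lemmas~\ref{lem:admissible1} and \ref{lem:admissible2} and normalizing with Lemma~\ref{lem3}.
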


\begin{cor}
    \label{cor2}
If $C$ is a finitely generated congruence on $\boldsymbol{T}[X_1^{\pm}, \ldots, X_n^{\pm}]$ contained in $P(U)$, then there exists a t-admissible matrix $U^{\prime}$ whose $(1, 1)$-entry is one and whose rank is $l$ and that satisfies $C \subset P(U^{\prime})$.
\end{cor}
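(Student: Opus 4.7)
The plan is to mirror the argument establishing Corollary~\ref{cor1}, but to apply it simultaneously to a finite generating set of $C$ at every stage, using that the minimum of finitely many positive real numbers is positive.

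Pick a finite generating set $\{(f_1, g_1), \ldots, (f_s, g_s)\}$ of $C$; each pair lies in $P(U)$ by hypothesis. The proof of Corollary~\ref{cor1} proceeds by moving the leading $1$ of the first column of $U$ upward one row at a time: starting from a t-admissible matrix fitting the form displayed before Lemma~\ref{lem:admissible1}, with a $1$ in position $(k, 1)$ and zeros in positions $(1,1), \ldots, (k-1,1)$, Lemma~\ref{lem:admissible1} guarantees that $U_\varepsilon$ is t-admissible for every $\varepsilon > 0$, Lemma~\ref{lem:admissible2} provides a threshold $\varepsilon > 0$ such that the given pair lies in $P(U_\delta)$ for every $0 < \delta \le \varepsilon$, and Lemma~\ref{lem3} then lets us rescale the new $(k-1)$-th row by $1/\delta$ without changing the associated prime congruence, producing a t-admissible matrix of the same rank whose leading column has its leading $1$ sitting one row higher.

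For Corollary~\ref{cor2}, I would repeat this argument but, at each iteration, invoke Lemma~\ref{lem:admissible2} separately for each of the $s$ generators, obtaining thresholds $\varepsilon_1, \ldots, \varepsilon_s > 0$, and then set $\delta \colonequals \min_i \varepsilon_i$. Because $s$ is finite, $\delta > 0$, and by Lemmas~\ref{lem:admissible1} and~\ref{lem:admissible2} the matrix $U_\delta$ is simultaneously t-admissible and contains every generator $(f_i, g_i)$ in $P(U_\delta)$. Since $P(U_\delta)$ is a congruence, it follows that $C = \langle \{(f_i, g_i)\}_{i = 1}^s \rangle \subset P(U_\delta)$. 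Rescaling the $(k-1)$-th row by $1/\delta$ via Lemma~\ref{lem3} preserves both the associated prime congruence and the rank $l$, and produces a t-admissible matrix to which the same construction applies with $k$ replaced by $k - 1$. Iterating at most $k - 1$ times yields a t-admissible matrix $U'$ of rank $l$ with $(1, 1)$-entry equal to $1$ and $C \subset P(U')$.

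The only substantive point is ensuring the structural hypothesis on the intermediate matrices is maintained: after each rescaling, the matrix must again match the form needed to invoke Lemmas~\ref{lem:admissible1} and~\ref{lem:admissible2} with $k$ decremented, and this is immediate from the construction, since multiplying one row by a positive scalar preserves both t-admissibility and the pattern of zeros above the leading $1$ in the first column. Finite generation is used exactly in the step of taking a minimum over finitely many positive thresholds, which would break down for infinitely many generators; this is the ``retaking $\varepsilon$ finitely many times'' remark in the excerpt.
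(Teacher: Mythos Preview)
Your proposal is correct and matches the paper's approach exactly: the paper gives no separate proof but simply states that Corollaries~\ref{cor1} and~\ref{cor2} follow from Lemmas~\ref{lem3}, \ref{lem:admissible1}, and~\ref{lem:admissible2} because ``$\varepsilon > 0$ can be retaken finitely many times,'' and your write-up is a faithful unpacking of that sentence.
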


Let $\boldsymbol{e}_1$ denote the vector $\t(1, 0, \ldots, 0) \in \boldsymbol{R}^n$.

\begin{prop}
    \label{prop2}
Let $C$ be a congruence on $\boldsymbol{T}[X_1^{\pm}, \ldots, X_n^{\pm}]$.
If $\boldsymbol{V}(C)$ contains a $d$-dimensional nonempty $\boldsymbol{R}$-rational polyhedral set, then there exists a t-admissible matrix $U$ whose rank is $d + 1$ and whose first column is $\boldsymbol{e}_1$ and that satisfies $C \subset P(U)$.
In particular, the inequalities
\begin{align*}
\operatorname{dim}\boldsymbol{V}(C) + 1 &\le \operatorname{max} \left\{ \operatorname{rank}U \,\middle|\, \begin{array}{l} U \text{ is a t-admissible matrix such that}\\ \text{the first column of } U \text{ is } \boldsymbol{e}_1 \\ \text{and } C \subset P(U) \end{array} \right\}\\
&\le \operatorname{max} \left\{ \operatorname{rank}U \,\middle|\, \begin{array}{l} U \text{ is a t-admissible matrix such that}\\ \text{the first column of } U \text{ is a nonzero vector} \\ \text{and } C \subset P(U) \end{array} \right\}
\end{align*}
and
\begin{align*}
    \operatorname{dim}\boldsymbol{T}[X_1^{\pm}, \ldots, X_n^{\pm}] / C \ge d + 1
\end{align*}
hold.
\end{prop}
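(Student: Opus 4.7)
The plan is to construct the matrix $U$ directly from a relative-interior point of $P$ together with an integer basis of its direction space. Let $P \subset \boldsymbol{V}(C)$ be the nonempty $d$-dimensional $\boldsymbol{R}$-rational polyhedral set. Since $P$ is $\boldsymbol{R}$-rational, its $d$-dimensional affine hull has direction space spanned by integer vectors $\boldsymbol{w}_1, \ldots, \boldsymbol{w}_d \in \boldsymbol{Z}^n$; write $\vec{w}_i \colonequals \t \boldsymbol{w}_i$ for the corresponding row vectors. Pick $\boldsymbol{a}$ in the relative interior of $P$, put $\vec{a} \colonequals \t \boldsymbol{a}$, and set
\[
U \colonequals \begin{pmatrix} 1 & \vec{a} \\ 0 & \vec{w}_1 \\ \vdots & \vdots \\ 0 & \vec{w}_d \end{pmatrix}.
\]
This is a $(d+1) \times (n+1)$ matrix of rank $d+1$ with first column $\boldsymbol{e}_1$; t-admissibility is immediate: the top row is irredundant with $(c, \boldsymbol{n}) = (1, \boldsymbol{0})$, and for the $i$-th row with $i \geq 2$, the linear independence of the $\vec{w}_j$ yields $\boldsymbol{n} \in \boldsymbol{Z}^n$ with $\vec{w}_j \boldsymbol{n} = 0$ for $j < i-1$ and $\vec{w}_{i-1}\boldsymbol{n} \neq 0$, after which $c \colonequals -\vec{a} \boldsymbol{n}$ completes the witness.

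The key step is verifying $C \subset P(U)$. Because $\boldsymbol{V}(C) \neq \varnothing$, Remark~\ref{rem1} tells us $C$ is proper, so by Remark~\ref{rem3} it suffices to show that for every $(f, g) \in C$ with $f, g \neq -\infty$, maximum terms $m_f = c_f \odot \boldsymbol{X}^{\odot \boldsymbol{n}_f}$ of $f$ and $m_g = c_g \odot \boldsymbol{X}^{\odot \boldsymbol{n}_g}$ of $g$ with respect to the term order of $P(U)$ satisfy $(m_f, m_g) \in P(U)$. The geometric point is that this term order is the ordinary order at the perturbed point $\boldsymbol{a}_\varepsilon \colonequals \boldsymbol{a} + \varepsilon_1 \boldsymbol{w}_1 + \cdots + \varepsilon_d \boldsymbol{w}_d$ for $\varepsilon_1 \gg \varepsilon_2 \gg \cdots \gg \varepsilon_d > 0$: the ordinary-arithmetic difference of two monomial values at $\boldsymbol{a}_\varepsilon$ expands as $(c_1 - c_2) + \vec{a} (\boldsymbol{n}_1 - \boldsymbol{n}_2) + \sum_j \varepsilon_j \vec{w}_j (\boldsymbol{n}_1 - \boldsymbol{n}_2)$, whose sign for sufficiently small $\varepsilon$ is read lexicographically from the coordinates of $U \begin{pmatrix} c_1 - c_2 \\ \boldsymbol{n}_1 - \boldsymbol{n}_2 \end{pmatrix}$. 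Since $\boldsymbol{a}$ is in the relative interior of $P$, $\boldsymbol{a}_\varepsilon \in P \subset \boldsymbol{V}(C)$ for small enough $\varepsilon$, hence $f(\boldsymbol{a}_\varepsilon) = g(\boldsymbol{a}_\varepsilon)$ and by maximality $m_f(\boldsymbol{a}_\varepsilon) = m_g(\boldsymbol{a}_\varepsilon)$; reading this equation back through the expansion forces every entry of $U \begin{pmatrix} c_f - c_g \\ \boldsymbol{n}_f - \boldsymbol{n}_g \end{pmatrix}$ to vanish, which is precisely the defining relation placing $(m_f, m_g)$ in $P(U)$.

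The three-step inequality then follows at once: the leftmost is witnessed by the $U$ just built, the middle is trivial as it relaxes the constraint on the first column, and the final dimension bound comes from Theorem~\ref{thm:prime1}, which supplies the strict chain $P(U) \subsetneq P(U(d)) \subsetneq \cdots \subsetneq P(U(0))$ of $d+2$ prime congruences all containing $C$, hence a chain of length $d+1$ of primes in the quotient $\boldsymbol{T}[X_1^{\pm}, \ldots, X_n^{\pm}]/C$. I expect the main technical obstacle to be handling the ``$\varepsilon_1 \gg \cdots \gg \varepsilon_d$'' lexicographic argument cleanly: one must pick the $\varepsilon_j$ small enough in turn, bounded explicitly in terms of the finitely many term-difference vectors arising from pairs of terms of $f$ and $g$, so that each comparison between two monomials is resolved by the first nonvanishing row of $U$ applied to the corresponding difference, while simultaneously $\boldsymbol{a}_\varepsilon$ is kept inside $P$.
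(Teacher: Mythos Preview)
Your proof is correct and follows essentially the same strategy as the paper: build $U$ from a point of the polyhedral set together with an integer basis of its direction space, then argue that the $P(U)$ term order is realized by evaluation at a nearby perturbed point inside $\boldsymbol{V}(C)$, forcing maximum terms of $f$ and $g$ to agree there and hence to be $P(U)$-equivalent. The only packaging difference is that the paper avoids the informal ``$\varepsilon_1 \gg \cdots \gg \varepsilon_d$'' bookkeeping by introducing the auxiliary matrix $U_{(\delta_1,\ldots,\delta_d)}$ whose rows are the successive perturbed points and invoking Lemmas~\ref{lem3} and~\ref{lemm4} to get $P(U)=P(U_{(\delta_1,\ldots,\delta_d)})$; this turns your lexicographic perturbation into a finite sequence of evaluations and sidesteps the need to argue that the admissible $\varepsilon$-region is open.
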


\begin{proof}
Let $Y$ be a $d$-dimensional nonempty $\boldsymbol{R}$-rational polyhedral set contained in $\boldsymbol{V}(C)$.
Then there exist $\boldsymbol{w} \in Y$, linearly independent $\boldsymbol{u}_1, \ldots, \boldsymbol{u}_d \in \boldsymbol{Z}^n$ and $\varepsilon_1, \ldots, \varepsilon_d > 0$ satisfying $\boldsymbol{w} + \sum_{i = 1}^d \delta_i \boldsymbol{u}_i \in Y$ for any $i$ and $0 \le \delta_i \le \varepsilon_i$.
Let $\vec{w} \colonequals \t \boldsymbol{w}$, $\vec{u}_i \colonequals \t \boldsymbol{u}_i$ and
\begin{align*}
    U \colonequals
    \begin{pmatrix}
    1 & \vec{w} \\
    0 & \vec{u}_1\\
    \vdots & \vdots\\
    0 & \vec{u}_d
    \end{pmatrix}.
\end{align*}
Since $\boldsymbol{u}_1, \ldots, \boldsymbol{u}_d$ are linearly independent, $\operatorname{rank} U = d + 1$ holds and  for any $0 < k \le d$, there exists $\boldsymbol{n}_k \in \boldsymbol{Z}^n$ such that $\vec{u}_i \boldsymbol{n}_k = 0$ for any $1 \le i < k$ and $\vec{u}_k \boldsymbol{n}_k \not= 0$.
Thus such $\boldsymbol{n}_k$ satisfies 
\begin{align*}
    \begin{pmatrix} 1 & \vec{w} \end{pmatrix} \begin{pmatrix} -\vec{w} \boldsymbol{n}_k \\ \boldsymbol{n}_k \end{pmatrix} = 0
\end{align*}
and, for any $1 \le i < k$,
\begin{align*}
    \begin{pmatrix} 0 & \vec{u}_i \end{pmatrix} \begin{pmatrix} -\vec{w} \boldsymbol{n}_k \\ \boldsymbol{n}_k \end{pmatrix} = 0
\end{align*}
and
\begin{align*}
    \begin{pmatrix} 0 & \vec{u}_k \end{pmatrix} \begin{pmatrix} -\vec{w} \boldsymbol{n}_k \\ \boldsymbol{n}_k \end{pmatrix} \not= 0.
\end{align*}
This means that $U$ is t-admissible.

Next the inclusion $C \subset P(U)$ will be shown.
For $0 < \delta_i \le \varepsilon_i$, let
\begin{align*}
    U_{(\delta_1, \ldots, \delta_d)} \colonequals
    \begin{pmatrix*}[l]
    1 & \vec{w} \\
    1 & \vec{w} + \delta_1 \vec{u}_1\\
    1 & \vec{w} + \delta_1 \vec{u}_1 + \delta_2 \vec{u}_2\\
    \,\vdots & \,\vdots\\
    1 & \vec{w} + \delta_1 \vec{u}_1 + \cdots + \delta_d \vec{u}_d
    \end{pmatrix*}.
\end{align*}
Since $U$ is t-admissible, so is $U_{(\delta_1, \ldots, \delta_d)}$ and $P(U) = P(U_{(\delta_1, \ldots, \delta_d)})$ holds by Lemmas~\ref{lem3} and \ref{lemm4}.
Let $(f, g) \in C$.
Then $\boldsymbol{V}((f, g)) \supset \boldsymbol{V}(C) \supset Y$ hold.
Thus $f(\boldsymbol{w}) = g(\boldsymbol{w})$.
Since each term of $f$ is continuous, $\delta_1, \ldots, \delta_d$ can be chosen so that there exists a term $c \odot \boldsymbol{X}^{\odot \boldsymbol{n}}$ of $f$ such that $f(\boldsymbol{w}) = c \odot \boldsymbol{w}^{\odot \boldsymbol{n}} = \begin{pmatrix} 1 & \vec{w} \end{pmatrix} \begin{pmatrix} c \\ \boldsymbol{n} \end{pmatrix}$ and
\begin{align*}
f \left(\boldsymbol{w} + \sum_{i = 1}^j \delta_i \boldsymbol{u}_i \right) &= c \odot \left(\boldsymbol{w} + \sum_{i = 1}^j \delta_i \boldsymbol{u}_i \right)^{\odot \boldsymbol{n}} = \begin{pmatrix} 1 & \vec{w} + \sum_{i = 1}^j \delta_i \vec{u}_i \end{pmatrix} \begin{pmatrix} c \\ \boldsymbol{n} \end{pmatrix}
\end{align*}
for any $1 \le j \le d$.
Note that this term $c \odot \boldsymbol{X}^{\odot \boldsymbol{n}}$ is one of the maximum terms of $f$ with respect to the term ordering defined by $P(U_{(\delta_1, \ldots, \delta_d)})$.
By Remark~\ref{rem3}, the same argument for $g$ gives proper $\delta_1, \ldots, \delta_d$ ensuring $(f, g) \in P(U_{(\delta_1, \ldots, \delta_d)})$.
Since $P(U) = P(U_{(\delta_1, \ldots, \delta_d)})$, this concludes that $C \subset P(U)$ holds.
Hence
\begin{align*}
    \operatorname{dim} \boldsymbol{T}[X_1^{\pm}, \ldots, X_n^{\pm}] / C \ge \operatorname{dim} \boldsymbol{T}[X_1^{\pm}, \ldots, X_n^{\pm}] / P(U) = d + 1
\end{align*}
hold.
\end{proof}

\begin{prop}
	\label{prop3}
Let $C$ be a congruence on $\boldsymbol{T}[X_1^{\pm}, \ldots, X_n^{\pm}]$ having a finite congruence tropical basis.
If $U$ is a t-admissible matrix $U$ whose first column is a nonzero vector and satisfying $C \subset P(U)$, then $\boldsymbol{V}(C)$ contains a $(\operatorname{rank}U - 1)$-dimensional $\boldsymbol{R}$-rational polyhedral set.
In particular, if $\boldsymbol{V}(C)$ is empty, then there exists no such $U$, and the inequality
\begin{align*}
\operatorname{dim}\boldsymbol{V}(C) + 1 &\ge \operatorname{max} \left\{ \operatorname{rank}U \,\middle|\, \begin{array}{l} U \text{ is a t-admissible matrix such that}\\ \text{the first column of } U \text{ is a nonzero vector} \\ \text{and } C \subset P(U) \end{array} \right\}
\end{align*}
holds.
\end{prop}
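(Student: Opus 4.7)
The plan is to build an explicit $(l - 1)$-dimensional $\boldsymbol{R}$-rational polyhedral piece of $\boldsymbol{V}(C)$ with $l \colonequals \operatorname{rank} U$ by ``infinitesimally perturbing'' a base point along $l - 1$ linearly independent directions encoded in the lower rows of $U$. The main technical point will be showing that the perturbation actually lands inside $\boldsymbol{V}((f, g)) = \boldsymbol{V}(C)$, which requires careful control of which terms dominate at the perturbed points.

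The first step is to bring $U$ into a canonical form. By Lemma~\ref{lem1}, the congruence $C$ admits a one-element tropical basis $\{(f, g)\}$, so $\boldsymbol{V}(C) = \boldsymbol{V}((f, g))$ and $(f, g) \in C \subset P(U)$. Corollary~\ref{cor1} applied to $(f, g)$ gives a t-admissible matrix of rank $l$ with $(1, 1)$-entry $1$ and $(f, g)$ in its prime congruence; then Lemma~\ref{lemm4} (subtract suitable multiples of the first row from each subsequent row) clears the rest of the first column while preserving t-admissibility and the prime congruence. Relabelling the resulting matrix as $U$, I may assume
\begin{align*}
U = \begin{pmatrix} 1 & \vec{w}_1 \\ 0 & \vec{w}_2 \\ \vdots & \vdots \\ 0 & \vec{w}_l \end{pmatrix},
\end{align*}
with $\vec{w}_2, \ldots, \vec{w}_l$ linearly independent in $\boldsymbol{R}^n$ and $(f, g) \in P(U)$. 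Since $P(U)$ is prime, Remark~\ref{rem3} furnishes $P(U)$-maximum terms $c \odot \boldsymbol{X}^{\odot \boldsymbol{n}}$ of $f$ and $d \odot \boldsymbol{X}^{\odot \boldsymbol{m}}$ of $g$ satisfying $U \begin{pmatrix} c - d \\ \boldsymbol{n} - \boldsymbol{m} \end{pmatrix} = \boldsymbol{0}$; in particular $c + \vec{w}_1 \boldsymbol{n} = d + \vec{w}_1 \boldsymbol{m}$ and $\vec{w}_i(\boldsymbol{n} - \boldsymbol{m}) = 0$ for $i = 2, \ldots, l$.

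The key step is the perturbation. Set $\boldsymbol{w}_i \colonequals \t\vec{w}_i$, and for $\varepsilon > 0$ and $t = (t_2, \ldots, t_l) \in (0, 1)^{l - 1}$ define
\begin{align*}
\boldsymbol{w}_\varepsilon(t) \colonequals \boldsymbol{w}_1 + \sum_{i = 2}^l \varepsilon^{i - 1} t_i \boldsymbol{w}_i.
\end{align*}
For any term $c' \odot \boldsymbol{X}^{\odot \boldsymbol{n}'}$ of $f$, the quantity $(c + \vec{w}_\varepsilon(t) \boldsymbol{n}) - (c' + \vec{w}_\varepsilon(t) \boldsymbol{n}')$ is a polynomial in $\varepsilon$ whose coefficients are, up to the positive factors $t_i$, exactly the entries of $U \begin{pmatrix} c - c' \\ \boldsymbol{n} - \boldsymbol{n}' \end{pmatrix}$. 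By $P(U)$-maximality of $c \odot \boldsymbol{X}^{\odot \boldsymbol{n}}$, the lowest-order nonzero coefficient is positive, so for sufficiently small $\varepsilon > 0$ this polynomial is nonnegative (uniformly in $t \in [0, 1]^{l - 1}$ and over the finitely many terms of $f$). Hence $c \odot \boldsymbol{X}^{\odot \boldsymbol{n}}$ is a $\boldsymbol{w}_\varepsilon(t)$-maximum term of $f$, and the symmetric argument yields $d \odot \boldsymbol{X}^{\odot \boldsymbol{m}}$ for $g$. Moreover, $U \begin{pmatrix} c - d \\ \boldsymbol{n} - \boldsymbol{m} \end{pmatrix} = \boldsymbol{0}$ forces $(c - d) + \vec{w}_\varepsilon(t)(\boldsymbol{n} - \boldsymbol{m}) = 0$ identically, so $f(\boldsymbol{w}_\varepsilon(t)) = g(\boldsymbol{w}_\varepsilon(t))$ and $\boldsymbol{w}_\varepsilon(t) \in \boldsymbol{V}(C)$.

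For fixed small $\varepsilon > 0$, the linear independence of $\boldsymbol{w}_2, \ldots, \boldsymbol{w}_l$ makes $\{\boldsymbol{w}_\varepsilon(t) : t \in (0, 1)^{l-1}\}$ an open subset of the $(l - 1)$-dimensional affine plane $\boldsymbol{w}_1 + \operatorname{span}(\boldsymbol{w}_2, \ldots, \boldsymbol{w}_l)$. The intersection of the polyhedral cell where $c \odot \boldsymbol{X}^{\odot \boldsymbol{n}}$ is a maximum term of $f$, the analogous cell for $g$, and the hyperplane $\{\boldsymbol{x} \in \boldsymbol{R}^n : \vec{x}(\boldsymbol{n} - \boldsymbol{m}) = d - c\}$ is an $\boldsymbol{R}$-rational polyhedral set contained in $\boldsymbol{V}(C)$ that contains this open set; its dimension is therefore at least $l - 1$, and any of its $(l - 1)$-dimensional faces supplies the required polyhedral set. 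The ``in particular'' statements are then formal: if $\boldsymbol{V}(C) = \varnothing$, no such $U$ can exist (since $\operatorname{rank} U \ge 1$ would produce a nonempty polyhedral subset), so the maximum is taken over the empty set; otherwise $\dim \boldsymbol{V}(C) + 1 \ge \operatorname{rank} U$ for every admissible $U$, and taking the maximum gives the inequality.
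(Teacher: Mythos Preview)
Your approach is essentially the paper's own: reduce $U$ to have first column $\boldsymbol{e}_1$ via Corollary~\ref{cor1} and Lemma~\ref{lemm4}, then show that a small $(l-1)$-dimensional piece based at $\boldsymbol{w}_1$ with edge directions $\boldsymbol{w}_2, \ldots, \boldsymbol{w}_l$ lies inside $\boldsymbol{V}((f, g)) = \boldsymbol{V}(C)$. Your polynomial-in-$\varepsilon$ argument makes the paper's ``by continuity'' step explicit, and your final paragraph, which exhibits an honest $\boldsymbol{R}$-rational polyhedral witness (the linearity cell of $c \odot \boldsymbol{X}^{\odot \boldsymbol{n}}$ for $f$, intersected with that of $d \odot \boldsymbol{X}^{\odot \boldsymbol{m}}$ for $g$ and the rational hyperplane $\{ \boldsymbol{x} : (\boldsymbol{n} - \boldsymbol{m}) \cdot \boldsymbol{x} = d - c \}$), is actually cleaner than what the paper writes.

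There is, however, one genuine slip: the claimed uniformity in $t \in [0, 1]^{l-1}$ is false. If the first nonzero entry of $U\begin{pmatrix} c - c' \\ \boldsymbol{n} - \boldsymbol{n}' \end{pmatrix}$ lies in row $i \ge 2$, your polynomial in $\varepsilon$ is
\[
\varepsilon^{i-1}\Bigl( t_i\,\vec{w}_i(\boldsymbol{n} - \boldsymbol{n}') + \sum_{j > i} t_j\,\vec{w}_j(\boldsymbol{n} - \boldsymbol{n}')\,\varepsilon^{\,j - i} \Bigr),
\]
and as $t_i \to 0$ with the remaining $t_j$ fixed the threshold $\varepsilon$ guaranteeing nonnegativity shrinks to zero, so no single $\varepsilon > 0$ works across all of $[0,1]^{l-1}$. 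The fix is immediate and does not affect the rest of your argument: restrict $t$ to, say, $[\tfrac{1}{2}, 1]^{l-1}$. Then every leading coefficient is bounded below by a positive constant independent of $t$, a single $\varepsilon > 0$ handles all $t$ and all finitely many competing terms of $f$ and $g$ simultaneously, and $\{\boldsymbol{w}_\varepsilon(t) : t \in (\tfrac{1}{2}, 1)^{l-1}\}$ is still an $(l-1)$-dimensional open subset of the affine plane $\boldsymbol{w}_1 + \operatorname{span}(\boldsymbol{w}_2, \ldots, \boldsymbol{w}_l)$, so your concluding paragraph goes through unchanged.
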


\begin{proof}
By Remark~\ref{rem4} and Lemma~\ref{lem3}, if $\boldsymbol{V}(C)$ is empty, then the assertion is clear.
Assume that there exists a t-admissible matrix $U$ whose first column $\boldsymbol{u}_1$ is a nonzero vector and satisfying $C \subset P(U)$ and $l \colonequals \operatorname{rank}U$.
Then the first nonzero entry of $\boldsymbol{u}_1$ is positive as $U$ is t-admissible.
If $T = \{ (f, g) \}$ is a finite congruence tropical basis of $C$, then since $\langle T \rangle \subset C \subset P(U)$ and $T$ is finite, by Lemma~\ref{lem3} and Corollary~\ref{cor2}, there exists a t-admissible matrix $U^{\prime}$ whose $(1, 1)$-entry is one and which satisfies $\operatorname{rank}U^{\prime} = l$ and $\langle T \rangle \subset P(U^{\prime})$.
By applying Lemma~\ref{lemm4} to $U^{\prime}$ repeatedly, it is enough to consider the case that the first column of $U^{\prime}$ is $\boldsymbol{e}_1$:
\begin{align*}
    U^{\prime} \equalscolon \begin{pmatrix}
        1 & \vec{w}\\
        0 & \vec{u}_1\\
        \vdots & \vdots\\
        0 & \vec{u}_{l-1}
    \end{pmatrix}.
\end{align*}
For $\varepsilon_i > 0$, by Lemmas~\ref{lem3} and \ref{lemm4}, the matrix
\begin{align*}
    U^{\prime}_{(\varepsilon_1, \ldots, \varepsilon_{l-1})} \colonequals
    \begin{pmatrix*}[l]
        1 & \vec{w}\\
        1 & \vec{w} + \varepsilon_1\vec{u}_1\\
        \,\vdots & \,\vdots\\
        1 & \vec{w} + \varepsilon_1 \vec{u}_1 + \cdots + \varepsilon_{l-1} \vec{u}_{l-1}
    \end{pmatrix*}
\end{align*}
is t-admissible and $P(U^{\prime}) = P(U^{\prime}_{(\varepsilon_1, \ldots, \varepsilon_{l-1})})$ holds.
As $(f, g)$ is in the prime congruence $P(U^{\prime})$, it is also in $P(U^{\prime}(1))$.
Hence there exist terms $m_f$ and $m_g$ of $f$ and $g$, respectively, such that $m_f(\boldsymbol{w}) = f(\boldsymbol{w}) = g(\boldsymbol{w}) = m_g(\boldsymbol{w})$, where $\boldsymbol{w} \colonequals \t \vec{w}$.
These terms $m_f$ and $m_g$ may not be unique, but, as they are continuous, they can be chosen to satisfy
\begin{align*}
&~f \left( \boldsymbol{w} + \sum_{i = 1}^j \delta_i \boldsymbol{u}_i \right) = m_f \left( \boldsymbol{w} + \sum_{i = 1}^j \delta_i \boldsymbol{u}_i \right)\\
=&~ m_g \left( \boldsymbol{w} + \sum_{i = 1}^j \delta_i \boldsymbol{u}_i \right) = g \left( \boldsymbol{w} + \sum_{i = 1}^j \delta_i \boldsymbol{u}_i \right),
\end{align*}
where $\boldsymbol{u}_i \colonequals \t \vec{u}_i$, for any $j = 1, \ldots, l$ with any sufficiently small positive numbers $\delta_1, \ldots, \delta_l$.
Therefore $\boldsymbol{V}(C) = \boldsymbol{V}(T)$ contains the $(l - 1)$-dimensional $\boldsymbol{R}$-rational polyhedral set $\left\{ \boldsymbol{w} + \sum_{i = 1}^{l - 1} \delta_i \boldsymbol{u}_i  \,\middle|\, 0 \le \delta_i \le \delta \right\}$, where $\delta$ is a sufficiently small positive number.
\end{proof}

Propositions~\ref{prop2} and \ref{prop3} give us the following:

\begin{cor}
	\label{cor3}
Let $C$ be a congruence on $\boldsymbol{T}[X_1^{\pm}, \ldots, X_n^{\pm}]$ having a finite congruence tropical basis.
If $\boldsymbol{V}(C)$ is nonempty, then the equalities
\begin{align*}
\operatorname{dim}\boldsymbol{V}(C) + 1 &= \operatorname{max} \left\{ \operatorname{rank}U \,\middle|\, \begin{array}{l} U \text{ is a t-admissible matrix such that}\\ \text{the first column of } U \text{ is } \boldsymbol{e}_1 \\ \text{and } C \subset P(U) \end{array} \right\}\\
&= \operatorname{max} \left\{ \operatorname{rank}U \,\middle|\, \begin{array}{l} U \text{ is a t-admissible matrix such that}\\ \text{the first column of } U \text{ is a nonzero vector} \\ \text{and } C \subset P(U) \end{array} \right\}
\end{align*}
hold.
\end{cor}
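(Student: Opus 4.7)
The plan is simply to chain together Propositions~\ref{prop2} and \ref{prop3}, which already do all the real work. First note that since $C$ has a finite congruence tropical basis, Remark~\ref{rem5} tells us that $\boldsymbol{V}(C)$ is a finite union of $\boldsymbol{R}$-rational polyhedral sets, so when $\boldsymbol{V}(C)$ is nonempty its dimension $d \colonequals \operatorname{dim}\boldsymbol{V}(C)$ is attained by one of the polyhedral sets in the union. This legitimizes applying Proposition~\ref{prop2} with this particular $d$-dimensional polyhedral set.

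Concretely, I would proceed as follows. Applying Proposition~\ref{prop2} produces a t-admissible matrix $U_0$ of rank $d+1$ whose first column is $\boldsymbol{e}_1$ and which satisfies $C \subset P(U_0)$; this immediately shows that the set on the right-hand side of the first max is nonempty and that
\[
\operatorname{dim}\boldsymbol{V}(C) + 1 \le \operatorname{max}\{\operatorname{rank}U \mid U \text{ t-admissible, first column } \boldsymbol{e}_1, C \subset P(U)\}.
\]
Since the vector $\boldsymbol{e}_1$ is in particular nonzero, every admissible matrix for the first max is also admissible for the second, giving
\[
\operatorname{max}\{\operatorname{rank}U \mid \cdots \boldsymbol{e}_1 \cdots\} \le \operatorname{max}\{\operatorname{rank}U \mid \cdots \text{nonzero} \cdots\}.
\]

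For the reverse direction I would invoke Proposition~\ref{prop3}: for any t-admissible matrix $U$ with nonzero first column and $C \subset P(U)$, the congruence variety $\boldsymbol{V}(C)$ contains an $\boldsymbol{R}$-rational polyhedral set of dimension $\operatorname{rank}U - 1$, whence $\operatorname{rank}U \le \operatorname{dim}\boldsymbol{V}(C)+1$. Taking the maximum over all such $U$ yields
\[
\operatorname{max}\{\operatorname{rank}U \mid \cdots \text{nonzero} \cdots\} \le \operatorname{dim}\boldsymbol{V}(C) + 1,
\]
and concatenating the three displayed inequalities squeezes all three quantities together. There is no genuine obstacle here, since the content lies entirely in the two preceding propositions; the only subtle point is ensuring the first max is non-vacuous, which is precisely what the existence statement in Proposition~\ref{prop2} supplies in the nonempty case.
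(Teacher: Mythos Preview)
Your proposal is correct and follows exactly the approach the paper intends: the paper simply states that Corollary~\ref{cor3} follows from Propositions~\ref{prop2} and \ref{prop3}, and you have spelled out precisely how the two chains of inequalities combine. The one clarification you add---that Remark~\ref{rem5} guarantees $\boldsymbol{V}(C)$ is a finite union of $\boldsymbol{R}$-rational polyhedral sets so that its dimension is realized by one of them---is exactly what is needed to justify invoking Proposition~\ref{prop2} with $d = \operatorname{dim}\boldsymbol{V}(C)$.
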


By Corollary~\ref{cor3}, the following is clear:

\begin{cor}
	\label{cor4}
If $P$ is a prime congruence on $\boldsymbol{T}[X_1^{\pm}, \ldots, X_n^{\pm}]$ such that $\boldsymbol{V}(P)$ consists of only one point and $\operatorname{dim}\boldsymbol{T}[X_1^{\pm}, \ldots, X_n^{\pm}] / P \ge 2$, then it has no finite congruence tropical bases.
\end{cor}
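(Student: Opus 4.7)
The plan is to argue by contradiction: assume $P$ admits a finite congruence tropical basis, and derive a contradiction using Corollary~\ref{cor3}.

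First, I would invoke Theorems~\ref{thm:prime1} and \ref{thm:prime2} to represent $P$ concretely. The alternative $P = P(U(0))$ of Theorem~\ref{thm:prime2} is excluded because it would give $\boldsymbol{T}[X_1^{\pm}, \ldots, X_n^{\pm}]/P \cong \boldsymbol{B}$, whose Krull dimension is zero, contradicting $\operatorname{dim} \boldsymbol{T}[X_1^{\pm}, \ldots, X_n^{\pm}]/P \geq 2$. Hence $P = P(U)$ for some t-admissible matrix $U$, and by Theorem~\ref{thm:prime1} we have $\operatorname{rank}(U) = \operatorname{dim}\boldsymbol{T}[X_1^{\pm}, \ldots, X_n^{\pm}]/P \geq 2$.

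Next, I would verify that the first column of this $U$ is a nonzero vector. If it were $\boldsymbol{0}$, then taking any reals $c_1 < c_2$ together with $\boldsymbol{n}_1 = \boldsymbol{n}_2 = \boldsymbol{0}$ gives $U \begin{pmatrix} c_1 - c_2 \\ \boldsymbol{0} \end{pmatrix} = \boldsymbol{0}$, so by the very definition of $P(U)$ the pair $(c_1 \oplus c_2, c_1) = (c_2, c_1)$ belongs to $P$; as these are distinct constant tropical Laurent polynomials, this would force $\boldsymbol{V}(P) = \varnothing$, contradicting the hypothesis that $\boldsymbol{V}(P)$ consists of exactly one point.

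Finally, since $\boldsymbol{V}(P)$ is nonempty and $P$ is assumed to have a finite congruence tropical basis, Corollary~\ref{cor3} (applied to $C = P$) yields $\operatorname{dim}\boldsymbol{V}(P) + 1 = 1$ on the left-hand side and the maximum of $\operatorname{rank}(U')$ over all t-admissible matrices $U'$ with nonzero first column and $P \subset P(U')$ on the right-hand side. The matrix $U$ constructed above is itself such a candidate, with $\operatorname{rank}(U) \geq 2$, producing the contradiction $1 \geq 2$. I expect the only genuine step requiring care to be the verification that the first column of $U$ is nonzero; the rest is a direct assembly of Theorems~\ref{thm:prime1}, \ref{thm:prime2} and Corollary~\ref{cor3}.
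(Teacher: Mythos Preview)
Your proof is correct and follows the same route the paper intends: the paper simply states that Corollary~\ref{cor4} is ``clear'' from Corollary~\ref{cor3}, and your argument is precisely the natural way to unpack that claim. The one substantive detail the paper leaves implicit, and which you supply, is the verification that the t-admissible matrix $U$ with $P = P(U)$ must have nonzero first column (since otherwise $\boldsymbol{V}(P) = \varnothing$); with that in hand, $U$ itself witnesses that the right-hand side of Corollary~\ref{cor3} is at least $\operatorname{rank}U \ge 2$, contradicting $\operatorname{dim}\boldsymbol{V}(P) + 1 = 1$.
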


\begin{prop}
	\label{prop4}
Let $C$ be a proper congruence on $\boldsymbol{T}[X_1^{\pm}, \ldots, X_n^{\pm}]$ and $C_{\boldsymbol{B}}$ the congruence $\langle \{ (f_{\boldsymbol{B}}, g_{\boldsymbol{B}}) \,|\, (f, g) \in C \} \rangle$ on $\boldsymbol{T}[X_1^{\pm}, \ldots, X_n^{\pm}]$.
If $C_{\boldsymbol{B}}$ has a finite congruence tropical basis, then the equality
\begin{align*}
\operatorname{dim}\boldsymbol{V}(C_{\boldsymbol{B}}) = \operatorname{max} \left\{ \operatorname{rank}U \,\middle|\, \begin{array}{l} U \text{ is a t-admissible matrix such that}\\ \text{the first column of } U \text{ is the zero vector } \boldsymbol{0} \\ \text{and } C \subset P(U) \end{array} \right\}
\end{align*}
holds.
\end{prop}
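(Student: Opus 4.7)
The plan is to prove both inequalities after first reducing to a $\boldsymbol{B}$-like setting. The preliminary observation is that when the first column of a t-admissible matrix $U$ is $\boldsymbol{0}$, every pair $(a \odot \boldsymbol{X}^{\odot \boldsymbol{n}}, \boldsymbol{X}^{\odot \boldsymbol{n}})$ lies in $P(U)$ (since $U \begin{pmatrix} a \\ \boldsymbol{0} \end{pmatrix} = \boldsymbol{0}$ automatically), so $(f, f_{\boldsymbol{B}}) \in P(U)$ for every tropical Laurent polynomial $f \neq -\infty$. This immediately yields the equivalence $C \subset P(U) \Leftrightarrow C_{\boldsymbol{B}} \subset P(U)$, so on the right-hand side of the claim I may replace $C$ by $C_{\boldsymbol{B}}$. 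Next I would produce a finite congruence tropical basis of $C_{\boldsymbol{B}}$ consisting of $\boldsymbol{B}$-like elements: starting from any finite tropical basis $\{(h_i, h'_i)\}_{i=1}^k$ of $C_{\boldsymbol{B}}$, Lemma~\ref{lem:finite} puts each $(h_i, h'_i)$ inside a congruence generated by some finite subset $E_i$ of the defining generators $\{(f_{\boldsymbol{B}}, g_{\boldsymbol{B}}) \,|\, (f, g) \in C\}$, and the union $E' \colonequals \bigcup_i E_i$ is a finite $\boldsymbol{B}$-like subset of $C_{\boldsymbol{B}}$ with $\boldsymbol{V}(E') = \boldsymbol{V}(C_{\boldsymbol{B}})$; Lemma~\ref{lem1} then collapses $E'$ into a single pair $(F, G)$ whose entries remain $\boldsymbol{B}$-like (tropical sums and products preserve $\boldsymbol{B}$-likeness) and satisfy $\boldsymbol{V}((F, G)) = \boldsymbol{V}(C_{\boldsymbol{B}})$.

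For the inequality $\operatorname{dim}\boldsymbol{V}(C_{\boldsymbol{B}}) \ge \mathrm{RHS}$: given t-admissible $U$ of rank $l$ with first column $\boldsymbol{0}$ and $C \subset P(U)$, write the rows of $U$ as $(0, \vec{u}_1), \ldots, (0, \vec{u}_l)$. Applying Remark~\ref{rem3} to $(F, G) \in P(U)$ yields terms $m_F = 0 \odot \boldsymbol{X}^{\odot \boldsymbol{n}_F}$ and $m_G = 0 \odot \boldsymbol{X}^{\odot \boldsymbol{n}_G}$ with $(F, m_F), (G, m_G), (m_F, m_G) \in P(U)$; with first column zero, the third containment forces $\vec{u}_j \boldsymbol{n}_F = \vec{u}_j \boldsymbol{n}_G$ for every $j$. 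On the $l$-dimensional sub-cone of $\{\sum \lambda_j \boldsymbol{u}_j \,|\, \lambda_j > 0 \}$ where the ratios $\lambda_j/\lambda_{j+1}$ are sufficiently large, lex-dominance makes $m_F, m_G$ achieve the respective maxima of $F$ and $G$ uniformly, and the equalities $\vec{u}_j \boldsymbol{n}_F = \vec{u}_j \boldsymbol{n}_G$ give $F(\boldsymbol{y}) = G(\boldsymbol{y})$ throughout the sub-cone, placing it inside $\boldsymbol{V}((F, G)) = \boldsymbol{V}(C_{\boldsymbol{B}})$.

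For the reverse inequality: since every generator $(f_{\boldsymbol{B}}, g_{\boldsymbol{B}})$ of $C_{\boldsymbol{B}}$ is positively homogeneous of degree one, $\boldsymbol{V}(C_{\boldsymbol{B}})$ is a finite union of $\boldsymbol{R}$-rational polyhedral cones through $\boldsymbol{0}$ and contains a $d$-dimensional rational cone with $d = \operatorname{dim}\boldsymbol{V}(C_{\boldsymbol{B}})$. Picking linearly independent $\boldsymbol{u}_1, \ldots, \boldsymbol{u}_d \in \boldsymbol{Z}^n$ spanning a simplicial $d$-dimensional sub-cone inside it and setting $U$ to have rows $(0, \vec{u}_j)$ produces a t-admissible matrix of rank $d$ with first column $\boldsymbol{0}$; running the deep-sub-cone argument in reverse (on a sub-cone, the lex-maxima of $f_{\boldsymbol{B}}, g_{\boldsymbol{B}}$ achieve the maxima, these maxima coincide because the sub-cone sits in $\boldsymbol{V}(C_{\boldsymbol{B}})$, and $\boldsymbol{B}$-likeness lets one read off $\vec{u}_j \boldsymbol{n}_f = \vec{u}_j \boldsymbol{n}_g$ from the resulting polynomial identity in the $\lambda_j$'s) gives $(f_{\boldsymbol{B}}, g_{\boldsymbol{B}}) \in P(U)$ for each generator and thus $C_{\boldsymbol{B}} \subset P(U)$, equivalently $C \subset P(U)$. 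The main technical obstacle is making the deep-sub-cone bookkeeping precise --- choosing the ratios $\lambda_j / \lambda_{j+1}$ large enough uniformly over the finite term sets --- but $\boldsymbol{B}$-likeness removes all coefficient contributions, reducing each comparison to a clean lex-comparison of exponent dot-products.
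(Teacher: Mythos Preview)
Your proof is correct and follows the same overall strategy as the paper's: establish the key equivalence $C \subset P(U) \Leftrightarrow C_{\boldsymbol{B}} \subset P(U)$ for zero first column, then prove both inequalities by evaluation arguments on sub-cones of $\boldsymbol{V}(C_{\boldsymbol{B}})$, mirroring Propositions~\ref{prop2} and~\ref{prop3}. The one genuine addition in your version is the preliminary construction of a $\boldsymbol{B}$-like single-pair tropical basis $(F,G)$ of $C_{\boldsymbol{B}}$ via Lemma~\ref{lem:finite} and Lemma~\ref{lem1}; the paper instead takes an arbitrary single-pair tropical basis and invokes the earlier proofs directly with $\boldsymbol{w}$ replaced by $\boldsymbol{0}$. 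Your refinement pays off in clarity: with all coefficients equal to zero, the value of a term at $\sum_j \lambda_j \boldsymbol{u}_j$ is exactly $\sum_j \lambda_j \vec{u}_j \boldsymbol{n}$, so lex-dominance translates directly into pointwise dominance on the deep sub-cone, and the identity $F=G$ there unwinds to the linear system $\vec{u}_j\boldsymbol{n}_F=\vec{u}_j\boldsymbol{n}_G$. The paper's analogous step (for instance the assertion that $(f,g)\in P(U(1))$ forces $f(\boldsymbol{u}_1)=g(\boldsymbol{u}_1)$) is less transparent, because nonzero coefficients can decouple the evaluation maximum from the $P(U)$-lex-maximum term; your $\boldsymbol{B}$-like reduction removes that interference up front.
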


\begin{proof}
As $C$ is proper, by Remark~\ref{rem5}, $\boldsymbol{V}(C_{\boldsymbol{B}})$ is nonempty.
Let  $d \colonequals \operatorname{dim}\boldsymbol{V}(C_{\boldsymbol{B}})$.
Since $\boldsymbol{V}(C_{\boldsymbol{B}})$ has a finite congruence tropical basis, it contains a $d$-dimensional $\boldsymbol{R}$-rational polyhedral set $Y$ satisfying $\boldsymbol{0} \in Y$ and there exist linearly independent $\boldsymbol{y}_1, \ldots, \boldsymbol{y}_d \in \boldsymbol{Z}^n$ and $\varepsilon_1, \ldots, \varepsilon_d > 0$ such that $\sum_{i = 1}^d \delta_i \boldsymbol{y}_i \in Y$ for any $0 \le \delta_i \le \varepsilon_i$ with $i = 1, \ldots, d$.
For these vectors $\boldsymbol{y}_1, \ldots, \boldsymbol{y}_d$, the same argument in the proof of Proposition~\ref{prop2} is true.
Since $\boldsymbol{w}$ (resp.~$\boldsymbol{u}_i$) is replaced with $\boldsymbol{0}$ (resp.~$\boldsymbol{y}_i$), and hence $U$ with
\begin{align*}
        W \colonequals \begin{pmatrix}
        0 & \vec{y}_1\\
        \vdots & \vdots\\
        0 & \vec{y}_d
    \end{pmatrix},
\end{align*}
where $\vec{y}_i \colonequals \t \boldsymbol{y}_i$, and $\boldsymbol{y}_1, \ldots, \boldsymbol{y}_d$ are linearly independent, $\operatorname{rank}W$ is $d$ in this case and $W$ is t-admissible.
As $C_{\boldsymbol{B}} \subset P(W)$ holds, $C \subset P(W)$ holds.
Therefore the inequality
\begin{align*}
\operatorname{dim}\boldsymbol{V}(C_{\boldsymbol{B}}) \le \operatorname{max} \left\{ \operatorname{rank}U \,\middle|\, \begin{array}{l} U \text{ is a t-admissible matrix such that}\\ \text{the first column of } U \text{ is the zero vector } \boldsymbol{0} \\ \text{and } C \subset P(U) \end{array} \right\}
\end{align*}
holds.

Let $U$ be a t-admissible matrix whose first column is the zero vector $\boldsymbol{0}$ and $C \subset P(U)$.
These conditions ensure $C_{\boldsymbol{B}} \subset P(U)$.
In fact, it is easy to check that $(f, g) \in P(U)$ if and only if $(f_{\boldsymbol{B}}, g_{\boldsymbol{B}}) \in P(U)$ for $f, g \in \boldsymbol{T}[X_1^{\pm}, \ldots, X_n^{\pm}]$.
Let $T = \{ (f, g) \}$ be a finite tropical basis of $C_{\boldsymbol{B}}$.
Since $(f, g)$ is in $P(U)$, it is also in $P(U(1))$.
Hence if
\begin{align*}
    U \equalscolon \begin{pmatrix}
        0 & \vec{u}_1\\
        \vdots & \vdots\\
        0 & \vec{u}_l
    \end{pmatrix},
\end{align*}
then $f(\boldsymbol{u}_1) = g(\boldsymbol{u}_1)$ holds, where $l \colonequals \operatorname{rank}U$ and $\boldsymbol{u}_1 \colonequals \t \vec{u}_1$.
By the same argument of the proof of Proposition~\ref{prop4}, an $l$-dimensional $\boldsymbol{R}$-rational polyhedral set contained in $\boldsymbol{V}(C_{\boldsymbol{B}})$ can be found.
Thus the inequality
\begin{align*}
\operatorname{dim}\boldsymbol{V}(C_{\boldsymbol{B}}) \ge \operatorname{max} \left\{ \operatorname{rank}U \,\middle|\, \begin{array}{l} U \text{ is a t-admissible matrix such that}\\ \text{the first column of } U \text{ is the zero vector } \boldsymbol{0} \\ \text{and } C \subset P(U) \end{array} \right\}
\end{align*}
holds.
\end{proof}

Theorem~~\ref{thm:main1-2} follows Remark~\ref{rem2} and Propositions~\ref{prop2}, \ref{prop3}, \ref{prop4}.

By Theorem~\ref{thm:main1-2} and Lemma~\ref{lem2}, the following holds:

\begin{cor}
	\label{cor5}
Let $C$ be a finitely generated proper congruence on $\boldsymbol{T}[X_1^{\pm}, \ldots, X_n^{\pm}]$.
Then the equality
\begin{align*}
    \operatorname{dim} \boldsymbol{T}[ X_1^{\pm}, \ldots, X_n^{\pm}] / C = \operatorname{max} \{ \operatorname{dim} \boldsymbol{V}(C) + 1, \operatorname{dim} \boldsymbol{V}(C_{\boldsymbol{B}}) \}
\end{align*}
holds.
\end{cor}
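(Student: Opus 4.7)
The plan is to reduce Corollary~\ref{cor5} to Theorem~\ref{thm:main1-2} by verifying that both $C$ and $C_{\boldsymbol{B}}$ have finite congruence tropical bases whenever $C$ is finitely generated and proper.

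First I would pick a finite generating set $T = \{(f_1, g_1), \ldots, (f_l, g_l)\}$ of $C$, so that $C = \langle T \rangle$. By the definition of the congruence variety associated with a subset, we have
\begin{align*}
\boldsymbol{V}(T) = \bigcap_{i = 1}^l \boldsymbol{V}((f_i, g_i)),
\end{align*}
and the equality $\boldsymbol{V}(T) = \boldsymbol{V}(\langle T \rangle) = \boldsymbol{V}(C)$ noted in Subsection~\ref{subsection2.5} then says that $T$ itself is a finite congruence tropical basis of $C$ in the sense of Definition~\ref{dfn:finite congruence tropical basis}.

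Next I would pass to $C_{\boldsymbol{B}}$. By Lemma~\ref{lem2}, the congruence $C_{\boldsymbol{B}}$ coincides with $\langle T_{\boldsymbol{B}} \rangle$, where $T_{\boldsymbol{B}} = \{(f_{1,\boldsymbol{B}}, g_{1,\boldsymbol{B}}), \ldots, (f_{l,\boldsymbol{B}}, g_{l,\boldsymbol{B}})\}$ is a finite subset of $\boldsymbol{T}[X_1^{\pm}, \ldots, X_n^{\pm}]^2$. Applying the same observation as in the previous step to $T_{\boldsymbol{B}}$ shows that $T_{\boldsymbol{B}}$ is a finite congruence tropical basis of $C_{\boldsymbol{B}}$.

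Finally, both hypotheses of Theorem~\ref{thm:main1-2} are met: $C$ is proper by assumption, and both $C$ and $C_{\boldsymbol{B}}$ admit finite congruence tropical bases. Invoking Theorem~\ref{thm:main1-2} directly yields the claimed equality. The argument is essentially formal once Lemma~\ref{lem2} is in hand, so there is no real obstacle; the only subtle point is making explicit that a finite generating set of a congruence automatically serves as a finite congruence tropical basis, which follows from the tautology $\boldsymbol{V}(T) = \boldsymbol{V}(\langle T \rangle)$ recorded in Subsection~\ref{subsection2.5}.
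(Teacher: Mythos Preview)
Your proposal is correct and follows essentially the same route as the paper: the paper simply states that Corollary~\ref{cor5} follows from Theorem~\ref{thm:main1-2} and Lemma~\ref{lem2}, and you have unpacked precisely those two ingredients, observing that any finite generating set $T$ of $C$ is automatically a finite congruence tropical basis (via $\boldsymbol{V}(T)=\boldsymbol{V}(\langle T\rangle)$) and that Lemma~\ref{lem2} transfers this to $C_{\boldsymbol{B}}$.
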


The class of tropical ideals in the tropical Laurent polynomial semiring $\boldsymbol{T}[X_1^{\pm}, \ldots, X_n^{\pm}]$, which is introduced in \cite[Definition~1.1]{Maclagan=Rincon3}, strictly contains all tropicalizations of ideals in the Laurent polynomial ring $K[X_1^{\pm}, \ldots, X_n^{\pm}]$ over a valuation field $K$.
It was shown in \cite{Maclagan=Rincon2} and \cite{Maclagan=Rincon3} that every tropical ideal $I$ has its dimension $\operatorname{dim}I$ and $\operatorname{Bend}(I)$ has a finite congruence tropical basis and $\boldsymbol{V}(\operatorname{Bend}(I))$ has dimension $\operatorname{dim}I$.
If $I_{\boldsymbol{B}}$ denotes $\{ f_{\boldsymbol{B}} \,|\, f \in I \}$ and we consider this sits in the Laurent polynomial semiring $\boldsymbol{B}[X_1^{\pm} ,\ldots, X_n^{\pm}]$ over $\boldsymbol{B}$, then it is also its tropical ideal, and hence the same things hold for $I_{\boldsymbol{B}}$.
Moreover, by \cite[Proposition~2.12]{Maclagan=Rincon2}, when we consider $I_{\boldsymbol{B}}$ sits in $\boldsymbol{T}[X_1^{\pm} ,\ldots, X_n^{\pm}]$, the congruence variety $\boldsymbol{V}(\operatorname{Bend}(I_{\boldsymbol{B}}))$ is the recession fan of $\boldsymbol{V}(\operatorname{Bend}(I))$.
This fact gives the following corollary:

\begin{cor}
	\label{cor6}
For any proper tropical ideal $I$ in $\boldsymbol{T}[X_1^{\pm}, \ldots, X_n^{\pm}]$, the equality
\begin{align*}
    \operatorname{dim}\boldsymbol{T}[X_1^{\pm}, \ldots, X_n^{\pm}] / \operatorname{Bend}(I) = \operatorname{dim}I + 1
\end{align*}
holds.
\end{cor}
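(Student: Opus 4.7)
The plan is to deduce the corollary from Theorem~\ref{thm:main1-2} applied to the proper congruence $C \colonequals \operatorname{Bend}(I)$. By the Maclagan--Rinc\'{o}n results cited just before the statement, $\operatorname{Bend}(I)$ already admits a finite congruence tropical basis, and $\dim \boldsymbol{V}(\operatorname{Bend}(I)) = \dim I$. So the only remaining hypothesis of Theorem~\ref{thm:main1-2} to verify is that $C_{\boldsymbol{B}}$ also admits such a basis, and the only quantity left to control is $\dim \boldsymbol{V}(C_{\boldsymbol{B}})$.

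Both come from the central identification
\[
\operatorname{Bend}(I)_{\boldsymbol{B}} = \operatorname{Bend}(I_{\boldsymbol{B}}).
\]
The inclusion $\supset$ is read off from generators: any bend generator $(h, h_{\hat{m}})$ of $\operatorname{Bend}(I_{\boldsymbol{B}})$ has $h = f_{\boldsymbol{B}}$ for some $f \in I$, and $m$ corresponds to a unique term $m'$ of $f$, so $(h, h_{\hat{m}}) = (f_{\boldsymbol{B}}, (f_{\hat{m'}})_{\boldsymbol{B}})$ lies in $\operatorname{Bend}(I)_{\boldsymbol{B}}$ as the image of the bend generator $(f, f_{\hat{m'}}) \in \operatorname{Bend}(I)$. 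For $\subset$, since $\operatorname{Bend}(I)$ is not in general finitely generated, I would invoke Lemma~\ref{lem:finite} to write any $(f, g) \in \operatorname{Bend}(I)$ as a member of $\langle F \rangle$ for some finite subset $F$ of bend generators of $I$, and then apply Lemma~\ref{lem2} to obtain $(f_{\boldsymbol{B}}, g_{\boldsymbol{B}}) \in \langle F \rangle_{\boldsymbol{B}} = \langle F_{\boldsymbol{B}} \rangle \subset \operatorname{Bend}(I_{\boldsymbol{B}})$.

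With this identification in hand, the Maclagan--Rinc\'{o}n machinery applied to the tropical ideal $I_{\boldsymbol{B}}$ furnishes a finite congruence tropical basis of $C_{\boldsymbol{B}} = \operatorname{Bend}(I_{\boldsymbol{B}})$, while their Proposition~2.12 identifies $\boldsymbol{V}(C_{\boldsymbol{B}})$ with the recession fan of $\boldsymbol{V}(C)$; in particular $\dim \boldsymbol{V}(C_{\boldsymbol{B}}) \le \dim \boldsymbol{V}(C) = \dim I$ by the remark in Subsection~\ref{subsection2.1}. Substituting into Theorem~\ref{thm:main1-2} yields
\[
\dim \boldsymbol{T}[X_1^{\pm}, \ldots, X_n^{\pm}] / \operatorname{Bend}(I) = \max\{\dim I + 1,\ \dim \boldsymbol{V}(C_{\boldsymbol{B}})\} = \dim I + 1,
\]
which is the claim. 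The main obstacle is precisely the identification $\operatorname{Bend}(I)_{\boldsymbol{B}} = \operatorname{Bend}(I_{\boldsymbol{B}})$: because the $(-)_{\boldsymbol{B}}$ construction on a congruence is defined via a congruence closure, and $\operatorname{Bend}(I)$ is not finitely generated in general, a naive comparison of generating sets does not suffice, and one genuinely must reduce to the finite-generator case via Lemma~\ref{lem:finite} before appealing to Lemma~\ref{lem2}.
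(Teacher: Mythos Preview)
Your proposal is correct and follows essentially the same approach as the paper: the paper's entire argument is the paragraph immediately preceding the corollary, which collects the Maclagan--Rinc\'{o}n facts about $I$ and $I_{\boldsymbol{B}}$ (finite congruence tropical bases, $\dim\boldsymbol{V}(\operatorname{Bend}(I))=\dim I$, and $\boldsymbol{V}(\operatorname{Bend}(I_{\boldsymbol{B}}))=\operatorname{rec}(\boldsymbol{V}(\operatorname{Bend}(I)))$) and then invokes Theorem~\ref{thm:main1-2}. You are actually more careful than the paper in one respect: you make explicit and justify the identification $\operatorname{Bend}(I)_{\boldsymbol{B}}=\operatorname{Bend}(I_{\boldsymbol{B}})$, which the paper uses tacitly when passing from the hypothesis of Theorem~\ref{thm:main1-2} (about $C_{\boldsymbol{B}}$) to the Maclagan--Rinc\'{o}n statements (about $\operatorname{Bend}(I_{\boldsymbol{B}})$); your reduction via Lemma~\ref{lem:finite} to the finitely generated case of Lemma~\ref{lem2} is a clean way to handle this, though in fact the argument behind Lemma~\ref{lem2} (that $f\mapsto f_{\boldsymbol{B}}$ is a semiring endomorphism, so the preimage of any congruence is a congruence) works for arbitrary generating sets and the finiteness detour is not strictly needed.
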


In \cite{Dress=Wenzel}, Dress and Wenzel defined valuated matroids, which are matroids with specific functions.
Speyer introduced in \cite{Speyer} the concept of tropical linear spaces as intersections of a kind of finite numbers of tropical hyperplanes, and verified that valuated matroids with finite ground sets define tropical linear spaces (in the following manner) and vice versa.
For a valuated matroid $p$, each of its circuits defines a tropical linear form.
When the ground set of $p$ is finite, there are only a finite number of such tropical linear forms, and the intersection of all associated tropical hyperplanes (in $\boldsymbol{R}^n$) is a tropical linear space.
When $p$ has a finite ground set $E$, then by taking the support of $p$, a matroid $M$ over $E$ whose rank coincides with that of $p$ is constructed.
Then the tropical linear spaces associated with $M$ are the recession cones of tropical linear spaces associated with $p$, which make the recession fan.
Thus we have the following:

\begin{cor}
	\label{cor7}
Let $p$ be a valuated matroid with a finite ground set, $\mathcal{C}_p$ the set of tropical linear forms defined by circuits of $p$, and $L_p$ the tropical linear space associated with $p$ in $\boldsymbol{R}^n$.
Then the equalities
\begin{align*}
    &~ \operatorname{dim} \boldsymbol{T}[X_1^{\pm}, \ldots, X_n^{\pm}] / \operatorname{Bend}(\mathcal{C}_p)\\
    =&~ \operatorname{dim} \boldsymbol{T}[X_1^{\pm}, \ldots, X_n^{\pm}] / \boldsymbol{E}_{\pm}(L_p)\\
    =&~ \operatorname{dim} L_p + 1
\end{align*}
hold.
\end{cor}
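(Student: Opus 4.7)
The plan is to apply Corollary~\ref{cor5} to $\operatorname{Bend}(\mathcal{C}_p)$ for the outer equality, and then to sandwich $\boldsymbol{E}_{\pm}(L_p)$ between $\operatorname{Bend}(\mathcal{C}_p)$ and the lower bound provided by Proposition~\ref{prop2} for the middle equality. Since the ground set of $p$ is finite, $\mathcal{C}_p$ is a finite set of tropical linear forms, each having finitely many terms; consequently $\operatorname{Bend}(\mathcal{C}_p)$ is a finitely generated congruence on $\boldsymbol{T}[X_1^{\pm}, \ldots, X_n^{\pm}]$, which is proper under the standing assumption that $L_p$ is nonempty. By Speyer's identification of the tropical linear space with the intersection of the tropical hypersurfaces cut out by the circuit linear forms, $\boldsymbol{V}(\operatorname{Bend}(\mathcal{C}_p)) = \bigcap_{f \in \mathcal{C}_p} \boldsymbol{V}(f) = L_p$.

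To extract $\operatorname{dim}\boldsymbol{V}(\operatorname{Bend}(\mathcal{C}_p)_{\boldsymbol{B}})$, I apply Lemma~\ref{lem2} to the finite set of bend generators to get $\operatorname{Bend}(\mathcal{C}_p)_{\boldsymbol{B}} = \operatorname{Bend}((\mathcal{C}_p)_{\boldsymbol{B}})$. As noted in the paragraph preceding the corollary, $(\mathcal{C}_p)_{\boldsymbol{B}}$ is exactly the set of tropical linear forms associated with the circuits of the underlying matroid $M$, and by \cite[Proposition~2.12]{Maclagan=Rincon2} the associated tropical linear space $L_M$ is the recession fan of $L_p$. In particular $\operatorname{dim}\boldsymbol{V}(\operatorname{Bend}(\mathcal{C}_p)_{\boldsymbol{B}}) = \operatorname{dim} L_M \le \operatorname{dim} L_p$, and Corollary~\ref{cor5} yields
\begin{align*}
\operatorname{dim} \boldsymbol{T}[X_1^{\pm}, \ldots, X_n^{\pm}] / \operatorname{Bend}(\mathcal{C}_p) = \max\{ \operatorname{dim} L_p + 1,\, \operatorname{dim} L_M \} = \operatorname{dim} L_p + 1.
\end{align*}

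For the middle equality, each $f \in \mathcal{C}_p$ satisfies $L_p \subset \boldsymbol{V}(f)$, so $f = f_{\hat m}$ on $L_p$ for every term $m$; hence every bend generator lies in $\boldsymbol{E}_{\pm}(L_p)$ and $\operatorname{Bend}(\mathcal{C}_p) \subset \boldsymbol{E}_{\pm}(L_p)$. Because prime congruences containing $\boldsymbol{E}_{\pm}(L_p)$ form a subfamily of those containing $\operatorname{Bend}(\mathcal{C}_p)$, the Krull dimension of the quotient by the larger congruence is at most that of the quotient by the smaller, giving $\operatorname{dim}\boldsymbol{T}[X_1^{\pm}, \ldots, X_n^{\pm}]/\boldsymbol{E}_{\pm}(L_p) \le \operatorname{dim} L_p + 1$. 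For the matching lower bound, since $L_p$ is Euclidean-closed and the Euclidean topology coincides with the congruence-variety topology by \cite[Lemma~3.7.4]{Giansiracusa=Giansiracusa}, we have $\boldsymbol{V}(\boldsymbol{E}_{\pm}(L_p)) = L_p$; Proposition~\ref{prop2} then supplies a t-admissible matrix $U$ of rank $\operatorname{dim} L_p + 1$ with $\boldsymbol{E}_{\pm}(L_p) \subset P(U)$, certifying $\operatorname{dim}\boldsymbol{T}[X_1^{\pm}, \ldots, X_n^{\pm}]/\boldsymbol{E}_{\pm}(L_p) \ge \operatorname{dim} L_p + 1$.

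The main technical obstacle is less the dimension computation itself---which plugs directly into Corollary~\ref{cor5} once the ingredients are assembled---and more the bookkeeping translating between the valuated matroid $p$, its $\boldsymbol{B}$-reduction $M$, and their respective tropical linear spaces $L_p$ and $L_M$; in particular the identification of $L_M$ with the recession fan of $L_p$ from \cite[Proposition~2.12]{Maclagan=Rincon2}, on which both the first equality and the input to the sandwich argument rely.
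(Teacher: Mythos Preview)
Your proof is correct and matches what the paper leaves implicit (it offers no proof beyond ``Thus we have the following:'' after the recession-fan discussion for $L_p$ and $L_M$). Your sandwich argument for the $\boldsymbol{E}_{\pm}(L_p)$ equality, using $\operatorname{Bend}(\mathcal{C}_p) \subset \boldsymbol{E}_{\pm}(L_p)$ together with Proposition~\ref{prop2}, is one of the two natural ways to fill this in, the other being a direct application of Theorem~\ref{thm:main1-2} to $\boldsymbol{E}_{\pm}(L_p)$ via Lemma~\ref{lem6}; your route has the minor advantage of not needing to verify a finite congruence tropical basis for $\boldsymbol{E}_{\pm}(L_p)_{\boldsymbol{B}}$.
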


Next, we prove another of our main theorems:

\begin{thm}[Theorem~\ref{thm:main2-1}]
    \label{thm:main2-2}
Let $\Gamma$ be a tropical curve.
Then the equalities
\begin{align*}
    \operatorname{dim}\operatorname{Rat}(\Gamma) = \begin{cases} 1 \quad \text{if $\Gamma$ is one point},\\
    2 \quad \text{otherwise}
    \end{cases}
\end{align*}
hold.
\end{thm}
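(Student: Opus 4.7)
If $\Gamma$ is one point, every rational function on it is either $-\infty$ or a real number, with $\oplus,\odot$ reducing to $\max$ and $+$, so $\operatorname{Rat}(\Gamma) \cong \boldsymbol{T}$ has Krull dimension one.

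Assume henceforth that $\Gamma$ is not one point. For the lower bound $\operatorname{dim}\operatorname{Rat}(\Gamma) \geq 2$, the plan is to fix a point $x \in \Gamma$ not at infinity together with a tangent direction $v$ at $x$, and exhibit a strict chain of three prime congruences $M \supsetneq \operatorname{Ker}(\operatorname{ev}_x) \supsetneq P_{x,v}$. Here $M$ is the maximal proper congruence (with quotient $\boldsymbol{B}$); $\operatorname{ev}_x \colon \operatorname{Rat}(\Gamma) \to \boldsymbol{T}$ is evaluation at $x$ (with quotient $\boldsymbol{T}$); and $P_{x,v}$ is the germ congruence identifying $f,g$ if and only if they agree on an initial segment of $\Gamma$ at $x$ in direction $v$. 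Since rational functions are locally integer-affine, a class in $\operatorname{Rat}(\Gamma)/P_{x,v}$ is determined by the value at $x$ together with the integer slope in direction $v$, so this quotient is cancellative and totally ordered under the lexicographic order on (value, slope); hence $P_{x,v}$ is prime, and strictness of the chain is immediate from choosing $f,g$ with $f(x)=g(x)$ but distinct slopes at $x$ in direction $v$.

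For the upper bound, by \cite[Lemma~3.11]{JuAe6} there is a surjective $\boldsymbol{T}$-algebra homomorphism $\psi \colon \overline{\boldsymbol{T}(X_1,\ldots,X_n)} \twoheadrightarrow \operatorname{Rat}(\Gamma)$, and by \cite[Proposition~3.12]{JuAe6} the congruence variety $\boldsymbol{V}(\operatorname{Ker}(\psi))$ realizes $\Gamma$ as a one-dimensional $\boldsymbol{R}$-rational polyhedral complex in $\boldsymbol{R}^n$. Restricting $\psi$ to the Laurent polynomial subring gives $\phi \colon \boldsymbol{T}[X_1^{\pm},\ldots,X_n^{\pm}] \to \operatorname{Rat}(\Gamma)$ with image $R$, a cancellative $\boldsymbol{B}$-subalgebra whose semifield of fractions is $\operatorname{Rat}(\Gamma)$. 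Setting $C_0 \colonequals \operatorname{Ker}(\phi)$ identifies $R \cong \boldsymbol{T}[X_1^{\pm},\ldots,X_n^{\pm}]/C_0$, and $\boldsymbol{V}(C_0)=\Gamma$ has dimension one. The plan is first to show $\operatorname{dim}\operatorname{Rat}(\Gamma) = \operatorname{dim} R$ by extending prime congruences of $R$ to $Q(R)=\operatorname{Rat}(\Gamma)$ via localization of cancellative $\boldsymbol{B}$-algebras, and then to apply Theorem~\ref{thm:main1-2}: provided that $C_0$ and $(C_0)_{\boldsymbol{B}}$ admit finite congruence tropical bases, one obtains
\begin{align*}
\operatorname{dim}\operatorname{Rat}(\Gamma) \;=\; \operatorname{dim} R \;=\; \max\{\operatorname{dim}\boldsymbol{V}(C_0)+1,\, \operatorname{dim}\boldsymbol{V}((C_0)_{\boldsymbol{B}})\} \;=\; \max\{2,\,\operatorname{dim}\boldsymbol{V}((C_0)_{\boldsymbol{B}})\}.
\end{align*}

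The main obstacle is then the bound $\operatorname{dim}\boldsymbol{V}((C_0)_{\boldsymbol{B}}) \leq 2$. As Example~\ref{ex3} warns, $\boldsymbol{V}((C_0)_{\boldsymbol{B}})$ can be strictly larger than the recession fan of $\boldsymbol{V}(C_0)$, so one cannot simply invoke the one-dimensional recession fan of $\Gamma$. The approach is to use Proposition~\ref{prop4} and argue directly that every t-admissible matrix $U$ with zero first column satisfying $C_0 \subset P(U)$ has rank at most two, by exploiting that the integer-slope data coming from the edges of a one-dimensional $\Gamma$ permits at most two consecutive tie-breaking refinements of the term ordering after the zeroth column. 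Combining this with the lower bound yields $\operatorname{dim}\operatorname{Rat}(\Gamma) = 2$.
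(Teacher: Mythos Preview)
Your lower-bound chain $M \supsetneq \operatorname{Ker}(\operatorname{ev}_x) \supsetneq P_{x,v}$ is a pleasant elementary alternative; the paper instead gets both bounds at once from Theorem~\ref{thm:main1-2}, so it never constructs a chain by hand.

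For the upper bound your skeleton matches the paper's (pass to the Laurent quotient, equate $\operatorname{dim}\operatorname{Rat}(\Gamma)$ with $\operatorname{dim}R$ via fractions of a cancellative $\boldsymbol{B}$-algebra, then invoke Theorem~\ref{thm:main1-2}), but the step you flag as the ``main obstacle'' is a genuine gap as written. You neither verify the finite-tropical-basis hypotheses for $C_0$ and $(C_0)_{\boldsymbol{B}}$ needed to apply Theorem~\ref{thm:main1-2} and Proposition~\ref{prop4}, nor give an actual mechanism by which ``integer-slope data'' caps $\operatorname{rank}U$ at two; that sentence is a heuristic, not an argument.

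The missing observation is that your $C_0$ is not an arbitrary congruence with one-dimensional variety: since the $\psi$ you cite satisfies $\operatorname{Ker}(\psi)=\boldsymbol{E}(\boldsymbol{V}(\operatorname{Ker}(\psi)))$ (condition~(2) of Corollary~\ref{cor:curve}), restricting to Laurent polynomials gives exactly $C_0=\boldsymbol{E}_{\pm}(V)$ with $V\colonequals\boldsymbol{V}(\operatorname{Ker}(\psi))$. For congruences of this special form, Lemma~\ref{lem6} produces a single pair $(f,g)\in\boldsymbol{E}_{\pm}(V)$ that is simultaneously a finite congruence tropical basis and satisfies $\boldsymbol{V}((f_{\boldsymbol{B}},g_{\boldsymbol{B}}))=\operatorname{rec}(V)$, whence $\boldsymbol{V}((C_0)_{\boldsymbol{B}})=\operatorname{rec}(V)$ and $\operatorname{dim}\boldsymbol{V}((C_0)_{\boldsymbol{B}})\le\operatorname{dim}V=1$. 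The warning of Example~\ref{ex3} is therefore irrelevant here: that example exhibits a congruence \emph{not} of the form $\boldsymbol{E}_{\pm}(V)$. This is precisely the paper's route, which packages the passage to $\boldsymbol{E}_{\pm}(V)$ through Lemmas~\ref{lem:cancellative}, \ref{lem5}, and~\ref{lem6} before applying Theorem~\ref{thm:main1-2}.
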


With Theorem~\ref{thm:main2-2}, we can replace condition $(4)$ of \cite[Corollary~3.19]{JuAe7} with the condition that $\operatorname{dim}S$ is one or two:

\begin{cor}[{\cite[Corollary~3.19]{JuAe7}}]
    \label{cor:curve}
Let $S$ be a finitely generated semifield over $\boldsymbol{T}$.
Then $S$ is isomorphic to $\operatorname{Rat}(\Gamma)$ as a $\boldsymbol{T}$-algebra with some tropical curve $\Gamma$ if and only if some (and any) surjective $\boldsymbol{T}$-algebra homomorphism $\psi$ from a tropical rational function semifield to $S$ satisfies the following five conditions:

$(1)$ $\operatorname{Ker}(\psi)$ is finitely generated as a congruence,

$(2)$ $\operatorname{Ker}(\psi) = \boldsymbol{E}(\boldsymbol{V}(\operatorname{Ker}(\psi)))$ holds,

$(3)$ $\boldsymbol{V}(\operatorname{Ker}(\psi))$ is connected,

$(4)$ $\operatorname{dim}(\boldsymbol{V}(\operatorname{Ker}(\psi)))$ is zero or one, and

$(5)$ $\boldsymbol{V}(\operatorname{Ker}(\psi))$ has no two distinct rays whose primitive direction vectors coincide.
\end{cor}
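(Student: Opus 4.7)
The corollary in its original form, with condition (4) reading ``$\dim \boldsymbol{V}(\operatorname{Ker}(\psi)) \in \{0, 1\}$'', is precisely \cite[Corollary~3.19]{JuAe7}, so its proof is that citation. What the present paper contributes, as advertised in the sentence preceding the corollary, is the equivalent reformulation of condition (4) as the intrinsic condition ``$\dim S \in \{1, 2\}$''. The task therefore reduces to verifying this equivalence under the standing hypotheses (1), (2), (3), and (5).

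For the forward implication, suppose (1), (2), (3), the original geometric form of (4), and (5) all hold. Then \cite[Corollary~3.19]{JuAe7} produces a tropical curve $\Gamma$ together with a $\boldsymbol{T}$-algebra isomorphism $S \cong \operatorname{Rat}(\Gamma)$, and Theorem~\ref{thm:main2-2} immediately yields $\dim S \in \{1, 2\}$, which is the intrinsic form of (4).

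For the reverse implication, suppose (1), (2), (3), (5), and $\dim S \in \{1, 2\}$ all hold. The plan is to transfer the problem to a tropical Laurent polynomial presentation: pick a surjective $\boldsymbol{T}$-algebra homomorphism $\varphi \colon \boldsymbol{T}[X_1^{\pm}, \ldots, X_n^{\pm}] \twoheadrightarrow S'$ onto a finitely generated sub-$\boldsymbol{T}$-algebra $S'$ of $S$ from which $S$ is obtained by inverting finitely many elements, and set $C \colonequals \operatorname{Ker}(\varphi)$. Using condition (2), one identifies $\boldsymbol{V}(C)$ with $\boldsymbol{V}(\operatorname{Ker}(\psi))$ up to the embedding of coordinate spaces. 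Condition (1) together with Corollary~\ref{cor5} then gives
\[
\dim S' = \max\{\dim \boldsymbol{V}(\operatorname{Ker}(\psi)) + 1, \dim \boldsymbol{V}(C_{\boldsymbol{B}})\}.
\]
Conditions (3) and (5) control $\boldsymbol{V}(C_{\boldsymbol{B}})$, which, by Remark~\ref{rem5}, is intimately related to the recession fan of $\boldsymbol{V}(\operatorname{Ker}(\psi))$, and should force $\dim \boldsymbol{V}(C_{\boldsymbol{B}}) \le \dim \boldsymbol{V}(\operatorname{Ker}(\psi)) + 1$. Combined with the equality $\dim S = \dim S'$ (to be verified), this yields $\dim \boldsymbol{V}(\operatorname{Ker}(\psi)) = \dim S - 1 \in \{0, 1\}$, which is the original geometric form of (4). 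Invoking \cite[Corollary~3.19]{JuAe7} now completes the reverse implication.

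The main obstacles are twofold. First, one must relate $\dim S$ (the Krull dimension of a quotient of the tropical rational function semifield) to $\dim S'$ (the Krull dimension of a Laurent polynomial quotient); this requires a Krull-dimension invariance statement under passage to semifields of fractions, which is not directly supplied by Theorem~\ref{thm:main1-2}. Second, one must show that conditions (3) and (5) actually bound $\dim \boldsymbol{V}(C_{\boldsymbol{B}})$ by $\dim \boldsymbol{V}(\operatorname{Ker}(\psi)) + 1$, which will extend the recession-fan analyses carried out in Propositions~\ref{prop2}, \ref{prop3}, and \ref{prop4}.
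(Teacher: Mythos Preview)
Your first sentence is the complete and correct answer: the corollary, as stated with the geometric form of condition (4), is literally \cite[Corollary~3.19]{JuAe7}, and the paper offers no proof beyond that citation. This matches the paper exactly.

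Everything after your first sentence is extraneous for the stated result. You are attempting to justify the remark preceding the corollary---that condition (4) may be replaced by the intrinsic condition $\dim S \in \{1,2\}$---but that remark is not part of the corollary statement, and the paper does not supply a standalone proof of it either.

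If one does want to verify that replacement, your approach is needlessly indirect and, by your own admission, leaves two unresolved ``main obstacles''. The clean route is Corollary~\ref{cor8}: under conditions (1) and (2), $\operatorname{Ker}(\psi) = \boldsymbol{E}(\boldsymbol{V}(\operatorname{Ker}(\psi)))$ with $\boldsymbol{V}(\operatorname{Ker}(\psi))$ a nonempty finite union of $\boldsymbol{R}$-rational polyhedral sets, whence
\[
\dim S \;=\; \dim \overline{\boldsymbol{T}(X_1,\ldots,X_n)} / \boldsymbol{E}(\boldsymbol{V}(\operatorname{Ker}(\psi))) \;=\; \dim \boldsymbol{V}(\operatorname{Ker}(\psi)) + 1.
\]
The equivalence of the two forms of (4) is then immediate, with no Laurent-polynomial detour, no comparison of $\dim S$ with $\dim S'$, and no appeal to conditions (3) or (5) to control $\boldsymbol{V}(C_{\boldsymbol{B}})$. (Indeed, Lemma~\ref{lem6} already identifies $\boldsymbol{V}(\boldsymbol{E}_{\pm}(X)_{\boldsymbol{B}})$ with the recession fan $\operatorname{rec}(X)$, so its dimension is automatically bounded by $\dim X$; conditions (3) and (5) play no role in the dimension count.)
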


Let us prepare three lemmas to prove Theorem~\ref{thm:main2-2}.
First we reveal the relation between congruences on $\boldsymbol{T}[X_1^{\pm}, \ldots, X_n^{\pm}]$ and those on $\overline{\boldsymbol{T}(X_1, \ldots, X_n)}$ of the forms $\boldsymbol{E}_{\pm}(V)$ and $\boldsymbol{E}(V)$.
Let $V$ be a subset of $\boldsymbol{R}^n$.

\begin{lemma}
    \label{lem:cancellative}
The $\boldsymbol{B}$-algebra $\boldsymbol{T}[X_1^{\pm}, \ldots, X_n^{\pm}] / \boldsymbol{E}_{\pm}(V)$ is cancellative.
\end{lemma}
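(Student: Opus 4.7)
The plan is to unwind the definition of cancellativity in the quotient and exploit the fact that every point of $V \subset \boldsymbol{R}^n$ has finite coordinates, so evaluation of a (Laurent) monomial with finite coefficient can never yield $-\infty$.

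First I would translate the statement: the zero element of the quotient $\boldsymbol{T}[X_1^{\pm}, \ldots, X_n^{\pm}] / \boldsymbol{E}_{\pm}(V)$ is the class of $-\infty$, which consists precisely of those $h$ with $h(x) = -\infty$ for every $x \in V$. So cancellativity amounts to the following implication: if $h, f, g \in \boldsymbol{T}[X_1^{\pm}, \ldots, X_n^{\pm}]$ satisfy $h(x) + f(x) = h(x) + g(x)$ for all $x \in V$, then either $h(x) = -\infty$ for all $x \in V$, or $f(x) = g(x)$ for all $x \in V$.

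The key observation, which I would isolate as a one-line remark, is a dichotomy for tropical Laurent polynomials on $\boldsymbol{R}^n$: writing $h = \bigoplus_{\boldsymbol{n}} c_{\boldsymbol{n}} \odot \boldsymbol{X}^{\odot \boldsymbol{n}}$ with $c_{\boldsymbol{n}} \in \boldsymbol{T}$, each term $c_{\boldsymbol{n}} \odot \boldsymbol{x}^{\odot \boldsymbol{n}} = c_{\boldsymbol{n}} + \boldsymbol{n} \cdot \boldsymbol{x}$ is finite at any $\boldsymbol{x} \in \boldsymbol{R}^n$ as soon as $c_{\boldsymbol{n}} \neq -\infty$. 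Hence either every coefficient of $h$ is $-\infty$ (so $h = -\infty$ as an element of the semiring, and $h(\boldsymbol{x}) = -\infty$ identically on $\boldsymbol{R}^n$), or at least one coefficient is finite (so $h(\boldsymbol{x}) \in \boldsymbol{R}$ for every $\boldsymbol{x} \in \boldsymbol{R}^n$).

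Given this dichotomy, the proof splits cleanly. In the first case, $h$ is already the zero of $\boldsymbol{T}[X_1^{\pm}, \ldots, X_n^{\pm}] / \boldsymbol{E}_{\pm}(V)$, and we are done. In the second case, $h(\boldsymbol{x})$ lies in $\boldsymbol{R}$ for every $\boldsymbol{x} \in V$, so from $h(\boldsymbol{x}) + f(\boldsymbol{x}) = h(\boldsymbol{x}) + g(\boldsymbol{x})$ we may cancel the real number $h(\boldsymbol{x})$ to obtain $f(\boldsymbol{x}) = g(\boldsymbol{x})$, forcing $(f, g) \in \boldsymbol{E}_{\pm}(V)$ and hence equality of the classes of $f$ and $g$. (If $V = \varnothing$, then $\boldsymbol{E}_{\pm}(V)$ is the improper congruence and the statement is vacuous, which I would note in passing.) There is no real obstacle here: the entire argument rests on the finiteness-of-evaluation observation above, and no reference to the earlier apparatus of t-admissible matrices or prime congruences is needed.
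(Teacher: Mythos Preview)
Your argument is correct and follows essentially the same route as the paper: both proofs hinge on the observation that a tropical Laurent polynomial with at least one finite coefficient takes only real values on $\boldsymbol{R}^n$, so pointwise cancellation is available at every $x\in V$, with the empty-$V$ case handled separately as trivial.
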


\begin{proof}
If $V$ is empty, then $\boldsymbol{E}_{\pm}(\varnothing) = \boldsymbol{T}[X_1^{\pm}, \ldots, X_n^{\pm}]^2$, and thus the $\boldsymbol{B}$-algebra $\boldsymbol{T}[X_1^{\pm}, \ldots, X_n^{\pm}] / \boldsymbol{E}_{\pm}(\varnothing) = \{ -\infty \}$ is cancellative.
Assume that $V$ is nonempty.
For $f, g, h \in \boldsymbol{T}[X_1^{\pm}, \ldots, X_n^{\pm}] / \boldsymbol{E}_{\pm}(V)$ such that $f \not= -\infty$, if $f \odot g = f \odot h$ holds, then $g(x) = h(x)$ holds for any $x \in V$ since $f(x) \in \boldsymbol{R}$, which completes the proof.
\end{proof}

It is clear that $Q(\boldsymbol{T}[X_1^{\pm}, \ldots, X_n^{\pm}] / \boldsymbol{E}_{\pm}(\boldsymbol{R}^n)) = \overline{\boldsymbol{T}(X_1, \ldots, X_n)}$ holds.

If $V$ is empty, then $\boldsymbol{E}_{\pm}(\varnothing) = \boldsymbol{T}[X_1^{\pm}, \ldots, X_n^{\pm}]^2$ and $\boldsymbol{E}(\varnothing) = \overline{\boldsymbol{T}(X_1, \ldots, X_n)}^2$ hold.
Thus in this case, $\boldsymbol{T}[X_1^{\pm}, \ldots, X_n^{\pm}] / \boldsymbol{E}_{\pm}(\varnothing) = \{ -\infty \}$ and $\overline{\boldsymbol{T}(X_1, \ldots, X_n)} / \boldsymbol{E}(\varnothing) = \{ -\infty \}$, and hence $Q(\boldsymbol{T}[X_1^{\pm}, \ldots, X_n^{\pm}] / \boldsymbol{E}_{\pm}(\varnothing)) = \{ -\infty \} = \overline{\boldsymbol{T}(X_1, \ldots, X_n)} / \boldsymbol{E}(\varnothing)$ hold.
Note that clearly $\langle \boldsymbol{E}_{\pm}(\varnothing) / \boldsymbol{E}_{\pm}(\boldsymbol{R}^n) \rangle_{\overline{\boldsymbol{T}(X_1, \ldots, X_n)}} = \boldsymbol{E}(\varnothing)$ holds.
If $V$ is nonempty, then $(f, -\infty) \in \boldsymbol{E}_{\pm}(V)$ implies $f = -\infty$.
Therefore, by \cite[Proposition~3.9(iv)]{Joo=Mincheva1}, we have 
\begin{align*}
&~Q(\boldsymbol{T}[X_1^{\pm}, \ldots, X_n^{\pm}] / \boldsymbol{E}_{\pm}(V))\\
\cong&~ Q((\boldsymbol{T}[X_1^{\pm}, \ldots, X_n^{\pm}] / \boldsymbol{E}_{\pm}(\boldsymbol{R}^n)) / (\boldsymbol{E}_{\pm}(V) / \boldsymbol{E}_{\pm}(\boldsymbol{R}^n)))\\
\cong&~Q(\boldsymbol{T}[X_1^{\pm}, \ldots, X_n^{\pm}] / \boldsymbol{E}_{\pm}(\boldsymbol{R}^n)) / \langle \boldsymbol{E}_{\pm}(V) / \boldsymbol{E}_{\pm}(\boldsymbol{R}^n) \rangle_{\overline{\boldsymbol{T}(X_1, \ldots, X_n)}}\\
=&~\overline{\boldsymbol{T}(X_1, \ldots, X_n)} / \langle \boldsymbol{E}_{\pm}(V) / \boldsymbol{E}_{\pm}(\boldsymbol{R}^n) \rangle_{\overline{\boldsymbol{T}(X_1, \ldots, X_n)}}.
\end{align*}

\begin{lemma}
	\label{lem5}
The equality
\begin{align*}
\langle \boldsymbol{E}_{\pm}(V) / \boldsymbol{E}_{\pm}(\boldsymbol{R}^n) \rangle_{\overline{\boldsymbol{T}(X_1, \ldots, X_n)}} = \boldsymbol{E}(V)
\end{align*}
holds.
\end{lemma}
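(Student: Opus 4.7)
My plan is to establish the two inclusions separately; the inclusion $\subset$ is essentially formal, while $\supset$ is the substantive direction.

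For the inclusion $\subset$, I first check that $\boldsymbol{E}_{\pm}(V)/\boldsymbol{E}_{\pm}(\boldsymbol{R}^n)$ already sits inside $\boldsymbol{E}(V)$ under the natural inclusion of $\boldsymbol{T}[X_1^{\pm}, \ldots, X_n^{\pm}]/\boldsymbol{E}_{\pm}(\boldsymbol{R}^n)$ into $\overline{\boldsymbol{T}(X_1, \ldots, X_n)}$: if $(f,g) \in \boldsymbol{E}_{\pm}(V)$ then $f(x) = g(x)$ for every $x \in V$, and the images of $f$ and $g$ in $\overline{\boldsymbol{T}(X_1, \ldots, X_n)}$ thus agree as functions on $V$. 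Since $\boldsymbol{E}(V)$ is itself a congruence on $\overline{\boldsymbol{T}(X_1, \ldots, X_n)}$, it must contain the congruence generated by $\boldsymbol{E}_{\pm}(V)/\boldsymbol{E}_{\pm}(\boldsymbol{R}^n)$.

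For the reverse inclusion $\supset$, I may assume $V$ is nonempty, since the empty case has already been handled in the paragraph just before the statement. Take $(F,G) \in \boldsymbol{E}(V)$. Using the identification $Q(\boldsymbol{T}[X_1^{\pm}, \ldots, X_n^{\pm}]/\boldsymbol{E}_{\pm}(\boldsymbol{R}^n)) = \overline{\boldsymbol{T}(X_1, \ldots, X_n)}$ recorded just before the lemma, I represent $F$ and $G$ with a common denominator by choosing Laurent polynomials $f,g,h \in \boldsymbol{T}[X_1^{\pm}, \ldots, X_n^{\pm}]$ with $h \neq -\infty$ such that $F = [f] \odot [h]^{\odot (-1)}$ and $G = [g] \odot [h]^{\odot (-1)}$ in $\overline{\boldsymbol{T}(X_1, \ldots, X_n)}$. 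Because $h$ is a nonzero tropical Laurent polynomial, $h(x) \in \boldsymbol{R}$ at every $x \in \boldsymbol{R}^n$, so the equation $F(x) = G(x)$ for $x \in V$ is equivalent to $f(x) = g(x)$ for $x \in V$; that is, $(f,g) \in \boldsymbol{E}_{\pm}(V)$, so $([f],[g])$ lies in $\boldsymbol{E}_{\pm}(V)/\boldsymbol{E}_{\pm}(\boldsymbol{R}^n)$ and, a fortiori, in the generated congruence. Multiplying both coordinates by $[h]^{\odot (-1)}$, which the generated congruence is closed under, then yields $(F,G)$ itself, completing the argument.

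The only mild subtlety, rather than a real obstacle, is the $-\infty$ bookkeeping that underlies the common-denominator step. If one of $F, G$ equals $-\infty$, then since a nonzero element of $\overline{\boldsymbol{T}(X_1, \ldots, X_n)}$ evaluates to a real number at every point of $\boldsymbol{R}^n$ (its numerator and denominator being polynomial classes distinct from $-\infty$), the hypothesis that the two rational functions agree on the nonempty set $V$ forces both to be $-\infty$, so $(F,G)$ is diagonal and belongs to any congruence. Once this degenerate case is set aside, the representation $F = [f] \odot [h]^{\odot (-1)}$, $G = [g] \odot [h]^{\odot (-1)}$ can be chosen with $h \neq -\infty$ and the argument proceeds as above.
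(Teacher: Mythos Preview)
Your proof is correct and follows essentially the same route as the paper: both establish $\supset$ by clearing denominators to reduce a pair of rational functions to a pair of (Laurent) polynomial classes lying in $\boldsymbol{E}_{\pm}(V)/\boldsymbol{E}_{\pm}(\boldsymbol{R}^n)$, and then multiplying back. Your handling of $\subset$ is in fact a bit cleaner than the paper's---you simply observe that the generating set lies in the congruence $\boldsymbol{E}(V)$, whereas the paper passes through a finite subset via Lemma~\ref{lem:finite} and invokes \cite[Corollary~2.12]{Bertram=Easton} to reach the same conclusion.
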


\begin{proof}
The case when $V$ is empty was given above.

Assume that $V$ is nonempty.

For any $(f, g) \in \langle \boldsymbol{E}_{\pm}(V) / \boldsymbol{E}_{\pm}(\boldsymbol{R}^n) \rangle_{\overline{\boldsymbol{T}(X_1, \ldots, X_n)}}$, there exists a finite subset $F$ of $\boldsymbol{E}_{\pm}(V) / \boldsymbol{E}_{\pm}(\boldsymbol{R}^n)$ by Lemma \ref{lem:finite} such that $(f, g) \in \langle F \rangle_{\overline{\boldsymbol{T}(X_1, \ldots, X_n)}}$.
By \cite[Corollary~2.12]{Bertram=Easton}, the value $f(x)$ must be $g(x)$ for any $x \in V$, and hence $(f, g) \in \boldsymbol{E}(V)$.

For any $(f, g) \in \boldsymbol{E}(V)$, there exist $f_1, f_2, g_1, g_2 \in \overline{\boldsymbol{T}[X_1, \ldots, X_n]}$ such that $f_2 \not= -\infty, g_2 \not= -\infty$ and $f = f_1 \odot f_2^{\odot (-1)}, g = g_1 \odot g_2^{\odot (-1)}$.
Since $V$ is nonempty, $f_1 = -\infty$ if and only if $g_1 = -\infty$.
Assume that $f_1 \not= -\infty$ and $g_1 \not= -\infty$.
Since $(f_1 \odot g_2, g_1 \odot f_2) \in \overline{\boldsymbol{T}[X_1, \ldots, X_n]}^2$ and $f_1(x) \odot g_2(x) = g_1(x) \odot f_2(x)$ for any $x \in V$, the set $\boldsymbol{E}_{\pm}(V) / \boldsymbol{E}_{\pm}(\boldsymbol{R}^n)$ contains pairs of preimages of $f_1 \odot g_2$ and $g_1 \odot f_2$ in $\boldsymbol{T}[X_1, \ldots, X_n] \subset \boldsymbol{T}[X_1^{\pm}, \ldots, X_n^{\pm}]$.
In conclusion,
\begin{align*}
(f, g) &= \left((f_1 \odot g_2) \odot \left(f_2^{\odot (-1)} \odot g_2^{\odot (-1)} \right), (g_1 \odot f_2) \odot \left(f_2^{\odot (-1)} \odot g_2^{\odot (-1)} \right)\right)\\
&\in \langle \boldsymbol{E}_{\pm}(V) / \boldsymbol{E}_{\pm}(\boldsymbol{R}^n) \rangle_{\overline{\boldsymbol{T}(X_1, \ldots, X_n)}}    
\end{align*}
hold.
\end{proof}

Therefore, for any subset $V$ of $\boldsymbol{R}^n$, we have
\begin{align*}
Q(\boldsymbol{T}[X_1^{\pm}, \ldots, X_n^{\pm}] / \boldsymbol{E}_{\pm}(V)) \cong \overline{\boldsymbol{T}(X_1, \ldots, X_n)} / \boldsymbol{E}(V).
\end{align*}

Next we consider the relation between $\boldsymbol{V}((f_{\boldsymbol{B}}, g_{\boldsymbol{B}}))$ and the recession fan $\operatorname{rec}(\boldsymbol{V}((f, g)))$ for $f, g \in \boldsymbol{T}[X_1^{\pm}, \ldots, X_n^{\pm}]$.

\begin{lemma}
	\label{lem6}
Let $X$ be a finite union of nonempty $\boldsymbol{R}$-rational polyhedral sets in $\boldsymbol{R}^n$.
Then there exists an element $(f, g)$ in $\boldsymbol{E}(X)$ satisfying $\boldsymbol{V}((f_{\boldsymbol{B}}, g_{\boldsymbol{B}})) = \operatorname{rec}(X)$.
Moreover, $\boldsymbol{V}(\boldsymbol{E}(X)_{\boldsymbol{B}}) = \operatorname{rec}(X)$ holds with the congruence $\boldsymbol{E}(X)_{\boldsymbol{B}} \colonequals \langle \{ (f^{\prime}_{\boldsymbol{B}}, g^{\prime}_{\boldsymbol{B}}) \,|\, (f^{\prime}, g^{\prime}) \in \boldsymbol{E}(X) \} \rangle_{\overline{\boldsymbol{T}(X_1, \ldots, X_n)}}$.
The same things are true for $\boldsymbol{E}_{\pm}(X)$.
\end{lemma}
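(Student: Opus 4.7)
The plan is to manufacture one explicit witness $(F, G)$ realising the first assertion whose $\boldsymbol{B}$-reduction captures exactly $\operatorname{rec}(X)$, and then deduce the second assertion from the general compatibility between recession fans and $\boldsymbol{B}$-reductions hinted at in Remark~\ref{rem5}. First we write $X = \bigcup_{i=1}^{k} P_i$ with each $P_i = \{x \in \boldsymbol{R}^n \,|\, A_i x \ge b_i\}$ a nonempty $\boldsymbol{R}$-rational polyhedral set; let $\vec{a}_{i,1}, \ldots, \vec{a}_{i,l_i}$ be the integer rows of $A_i$ and $b_{i,1}, \ldots, b_{i,l_i}$ the components of $b_i$, so that $\operatorname{rec}(X) = \bigcup_i \{x \,|\, A_i x \ge \boldsymbol{0}\}$.

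For each $i$ we will set $F_i \colonequals 0 \oplus \bigoplus_{j=1}^{l_i} b_{i,j} \odot \boldsymbol{X}^{\odot (-\t \vec{a}_{i,j})}$ in $\boldsymbol{T}[X_1^{\pm}, \ldots, X_n^{\pm}]$. The presence of the constant term $0$ forces $F_i \ge 0$ pointwise, and $F_i(x) = 0$ is equivalent to $A_i x \ge b_i$; likewise $(F_i)_{\boldsymbol{B}} = 0 \oplus \bigoplus_j \boldsymbol{X}^{\odot(-\t \vec{a}_{i,j})}$ vanishes exactly on $\operatorname{rec}(P_i)$. The key observation, already exploited in Example~\ref{ex3}, is that for nonnegative tropical functions one has $\min_i F_i(x) = 0$ if and only if $\bigodot_i F_i(x) = \bigoplus_i \bigodot_{j \ne i} F_j(x)$. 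Setting $F \colonequals \bigoplus_i \bigodot_{j \ne i} F_j$ and $G \colonequals \bigodot_i F_i$ gives $\boldsymbol{V}((F, G)) = \bigcup_i P_i = X$, hence $(F, G) \in \boldsymbol{E}_{\pm}(X) \subset \boldsymbol{E}(X)$; applying the same identity with every $F_i$ replaced by $(F_i)_{\boldsymbol{B}}$ (also nonnegative) yields $\boldsymbol{V}((F_{\boldsymbol{B}}, G_{\boldsymbol{B}})) = \bigcup_i \operatorname{rec}(P_i) = \operatorname{rec}(X)$, proving the first assertion uniformly for both $\boldsymbol{E}(X)$ and $\boldsymbol{E}_{\pm}(X)$.

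For the second assertion the inclusion $\boldsymbol{V}(\boldsymbol{E}(X)_{\boldsymbol{B}}) \subset \operatorname{rec}(X)$ is immediate from $(F_{\boldsymbol{B}}, G_{\boldsymbol{B}}) \in \boldsymbol{E}(X)_{\boldsymbol{B}}$. For the reverse, we fix any $(f', g') \in \boldsymbol{E}(X)$ and clear denominators to obtain a pair $(p, q) \in \boldsymbol{T}[X_1^{\pm}, \ldots, X_n^{\pm}]^2$ with $\boldsymbol{V}((f', g')) = \boldsymbol{V}((p, q))$ and $\boldsymbol{V}((f'_{\boldsymbol{B}}, g'_{\boldsymbol{B}})) = \boldsymbol{V}((p_{\boldsymbol{B}}, q_{\boldsymbol{B}}))$. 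By Remark~\ref{rem5}, the decomposition $\boldsymbol{V}((p, q)) = \bigcup_i \{A'_i x \ge b'_i\}$ gives $\boldsymbol{V}((p_{\boldsymbol{B}}, q_{\boldsymbol{B}})) = \bigcup_i \{A'_i x \ge \boldsymbol{0}\}$, which already contains the recession cone of every nonempty cell $\{A'_i x \ge b'_i\}$; hence $\operatorname{rec}(\boldsymbol{V}((p, q))) \subset \boldsymbol{V}((p_{\boldsymbol{B}}, q_{\boldsymbol{B}}))$. Combined with $X \subset \boldsymbol{V}((f', g'))$, this gives $\operatorname{rec}(X) \subset \boldsymbol{V}((f'_{\boldsymbol{B}}, g'_{\boldsymbol{B}}))$; intersecting over all such $(f', g')$ concludes the proof. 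The identical argument with $\boldsymbol{E}_{\pm}(X)$-pairs in place of $\boldsymbol{E}(X)$-pairs handles the Laurent-polynomial case.

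The main obstacle is precisely the asymmetry highlighted in Remark~\ref{rem5}: for a generic pair $(p, q)$, the set $\boldsymbol{V}((p_{\boldsymbol{B}}, q_{\boldsymbol{B}}))$ can be strictly larger than $\operatorname{rec}(\boldsymbol{V}((p, q)))$ because of ``phantom'' recession cones coming from empty polyhedral cells $\{A'_i x \ge b'_i\} = \varnothing$. Our construction of $(F, G)$ is engineered so that every cell in its decomposition corresponds to an actual nonempty $P_i$ from the given presentation of $X$, ensuring no phantom cones appear and hence that $\boldsymbol{V}((F_{\boldsymbol{B}}, G_{\boldsymbol{B}}))$ pins down $\operatorname{rec}(X)$ on the nose; the very same asymmetry, read in the opposite direction, supplies the one-sided inclusion $\operatorname{rec}(\boldsymbol{V}((p, q))) \subset \boldsymbol{V}((p_{\boldsymbol{B}}, q_{\boldsymbol{B}}))$ that drives the second assertion.
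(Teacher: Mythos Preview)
Your proof is correct. For the first assertion your construction is essentially the paper's: the paper builds $f_i$ with $\boldsymbol{V}((f_i,0))=P_i$ and then takes the tropical minimum $F=(\bigoplus_i f_i^{\odot(-1)})^{\odot(-1)}$ in $\overline{\boldsymbol{T}(X_1,\ldots,X_n)}$, yielding the pair $(F,0)$; your pair $(\bigoplus_i\bigodot_{j\ne i}F_j,\ \bigodot_i F_i)$ is the standard Laurent-polynomial rewriting of that same tropical-minimum condition, with the bonus that it already lives in $\boldsymbol{T}[X_1^{\pm},\ldots,X_n^{\pm}]$ and so handles $\boldsymbol{E}(X)$ and $\boldsymbol{E}_{\pm}(X)$ simultaneously.

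Where you genuinely diverge is in the reverse inclusion $\operatorname{rec}(X)\subset\boldsymbol{V}(\boldsymbol{E}(X)_{\boldsymbol{B}})$. The paper does not argue pairwise: instead it invokes \cite[Theorem~1.1]{JuAe7} to say that the single witness $(F,0)$ already \emph{generates} $\boldsymbol{E}(X)$ as a congruence, whence Lemma~\ref{lem2} gives $\boldsymbol{E}(X)_{\boldsymbol{B}}=\langle(F_{\boldsymbol{B}},0)\rangle$ and the equality of varieties is immediate. Your route is more elementary and self-contained: you show directly that for every $(f',g')\in\boldsymbol{E}(X)$ one has $\operatorname{rec}(X)\subset\operatorname{rec}(\boldsymbol{V}((f',g')))\subset\boldsymbol{V}((f'_{\boldsymbol{B}},g'_{\boldsymbol{B}}))$ via Remark~\ref{rem5}, then intersect. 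This avoids the external dependence on \cite{JuAe7} at the cost of the (easy, but unstated) polyhedral fact that $X\subset Y$ implies $\operatorname{rec}(X)\subset\operatorname{rec}(Y)$ for finite unions of polyhedra. The paper's approach is shorter once the generation theorem is in hand; yours would survive even if that theorem were unavailable.
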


\begin{proof}
Let $Y_1, \ldots, Y_k$ be nonempty $\boldsymbol{R}$-rational polyhedral sets in $\boldsymbol{R}^n$ such that $X = \bigcup_{i = 1}^k Y_i$.
For each $Y_i$, there exist a matrix $A_i$ with integer entries and a vector $\boldsymbol{b}_i$ such that $Y_i = \{ \boldsymbol{x} \in \boldsymbol{R}^n \,|\, A_i \boldsymbol{x} \ge \boldsymbol{b}_i \}$.
Let
\begin{align*}
A_i \equalscolon \begin{pmatrix} \vec{a}_{i1} \\ \vdots \\ \vec{a}_{il_i} \end{pmatrix} \quad \text{and} \quad \boldsymbol{b}_i \equalscolon \begin{pmatrix} b_{i1} \\ \vdots \\ b_{il_i} \end{pmatrix}.
\end{align*}
If $\boldsymbol{a}_{ij} \colonequals \t \vec{a}_{ij}$ and $f_i \colonequals b_{i1} \odot \boldsymbol{X}^{\odot (- \boldsymbol{a}_{i1})} \odot \cdots \odot b_{il_i} \odot \boldsymbol{X}^{\odot (-\boldsymbol{a}_{il_i})} \oplus 0 \in \overline{\boldsymbol{T}(X_1, \ldots, X_n)}$, then $\boldsymbol{V}((f_i, 0)) = Y_i$ and
\begin{align*}
\boldsymbol{V}((f_i)_{\boldsymbol{B}}, 0_{\boldsymbol{B}}) &= \boldsymbol{V}((\boldsymbol{X}^{\odot (-\boldsymbol{a}_{i1})} \odot \cdots \odot \boldsymbol{X}^{\odot (- \boldsymbol{a}_{ik_i})} \oplus 0, 0))\\
&= \{ \boldsymbol{x} \in \boldsymbol{R}^n \,|\, A_i \boldsymbol{x} \ge \boldsymbol{0} \}\\
&= \operatorname{rec}(Y_i)
\end{align*}
hold.
For
\begin{align*}
F \colonequals \left( \bigoplus_{i = 1}^k f_{i}^{\odot (-1)} \right)^{\odot (-1)},
\end{align*}
it is easy to check that
\begin{align*}
\boldsymbol{V}((F, 0)) &= \bigcup_{i = 1}^k Y_i = X,\\
\boldsymbol{V}((F_{\boldsymbol{B}}, 0)) &= \bigcup_{i = 1}^k \operatorname{rec}(Y_i) = \operatorname{rec}(X)
\end{align*}
hold (cf~\cite[Lemma~3.10(1)]{JuAe7}).
Hence if $(f, g) \colonequals (F, 0)$, then since $(f, g) \in \boldsymbol{E}(X)$, and hence $(f_{\boldsymbol{B}}, g_{\boldsymbol{B}}) \in \boldsymbol{E}(X)_{\boldsymbol{B}}$, the inclusions $\boldsymbol{V}(\boldsymbol{E}(X)_{\boldsymbol{B}}) \subset \boldsymbol{V}((f_{\boldsymbol{B}}, g_{\boldsymbol{B}})) = \operatorname{rec}(X)$ hold.
By \cite[the proof of Theorem~1.1]{JuAe7}, this pair $(f, g)$ generates $\boldsymbol{E}(X)$.
Thus $\boldsymbol{E}(X)_{\boldsymbol{B}}$ coincides with $\langle (f_{\boldsymbol{B}}, g_{\boldsymbol{B}}) \rangle$ by Lemma~\ref{lem2}.
Therefore $\boldsymbol{V}(\boldsymbol{E}(X)_{\boldsymbol{B}}) = \boldsymbol{V}((f_{\boldsymbol{B}}, g_{\boldsymbol{B}})) = \operatorname{rec}(X)$ follow.
In the case of $\boldsymbol{T}[X_1^{\pm}, \ldots, X_n^{\pm}]$, when $F_l \odot \overline{g} = \overline{f}$ with $\overline{f}, \overline{g} \in \overline{\boldsymbol{T}[X_1, \ldots, X_n]} \setminus \{ -\infty \}$, some pair $(f, g)$ of these preimages $f, g \in \boldsymbol{T}[X_1, \ldots, X_n] \setminus \{ -\infty \} \subset \boldsymbol{T}[X_1^{\pm}, \ldots, X_n^{\pm}] \setminus \{ -\infty \}$ is desired.
In fact, by the same argument above, $\boldsymbol{V}((f_{\boldsymbol{B}}, g_{\boldsymbol{B}})) = \operatorname{rec}(X)$, and hence $\boldsymbol{V}(\boldsymbol{E}_{\pm}(X)_{\boldsymbol{B}}) \subset \operatorname{rec}(X)$ hold.
If $\operatorname{rec}(X) \setminus \boldsymbol{V}(\boldsymbol{E}_{\pm}(X)_{\boldsymbol{B}})$ is nonempty, then there exists $(f^{\prime}, g^{\prime}) \in \boldsymbol{E}_{\pm}(X)_{\boldsymbol{B}}$ such that $\operatorname{rec}(X) \setminus \boldsymbol{V}((f^{\prime}, g^{\prime}))$ is nonempty.
If $\overline{f^{\prime}}$ and $\overline{g^{\prime}}$ denote the images of $f^{\prime}$ and $g^{\prime}$ in $\overline{\boldsymbol{T}(X_1, \ldots, X_n)}$, respectively, then $\boldsymbol{V}((\overline{f^{\prime}}, \overline{g^{\prime}})) = \boldsymbol{V}((f^{\prime}, g^{\prime}))$ and $(\overline{f^{\prime}}, \overline{g^{\prime}}) \in \boldsymbol{E}(X)_{\boldsymbol{B}}$, this is a contradiction.
\end{proof}

Now we can prove Theorem~\ref{thm:main2-2}:

\begin{proof}[Proof of Theorem~\ref{thm:main2-2}]
For some $n$, there exists a surjective $\boldsymbol{T}$-algebra homomorphism $\psi \colon \overline{\boldsymbol{T}(X_1, \ldots, X_n)} \twoheadrightarrow \operatorname{Rat}(\Gamma)$ satisfying the five conditions in Corollary~\ref{cor:curve}.
Note that \cite[Proposition~3.12]{JuAe6} ensures that the natural compactification obtained from $\boldsymbol{V}(\operatorname{Ker}(\psi))$ by one-point compactifications of rays is isomorphic (i.e., isometric) to $\Gamma$ with lattice length (see \cite[Subsection~2.7 and after the proof of Lemma~3.17]{JuAe7} for more details).
By \cite[Proposition~3.4]{JuAe7}, there exist $f, g \in \overline{\boldsymbol{T}[X_1, \ldots, X_n]} \setminus \{ -\infty \}$ such that $\boldsymbol{E}(\boldsymbol{V}(\operatorname{Ker}(\psi))) = \left\langle \left( f \odot g^{\odot (-1)}, 0 \right) \right\rangle_{\overline{\boldsymbol{T}(X_1, \ldots, X_n)}}$.
It is easy to check that $\boldsymbol{V} \left( \left( f \odot g^{\odot (-1)}, 0 \right) \right) = \boldsymbol{V}((f, g))$ holds.
Thus, by the previous argument, $\boldsymbol{T}$-algebra isomorphisms
\begin{align*}
    \operatorname{Rat}(\Gamma) &\cong \overline{\boldsymbol{T}(X_1, \ldots, X_n)} / \left\langle \left( f \odot g^{\odot (-1)}, 0 \right) \right\rangle_{\overline{\boldsymbol{T}(X_1, \ldots, X_n)}}\\
    &\cong Q\left( \boldsymbol{T}[X_1^{\pm}, \ldots, X_n^{\pm}] / \boldsymbol{E}_{\pm}(\boldsymbol{V}((f, g))) \right)
\end{align*}
follow.
Since $(h, -\infty) \in C$ implies $h = -\infty$ for any proper congruence $C$ on $\boldsymbol{T}[X_1^{\pm}, \ldots, X_n^{\pm}]$ by Remark~\ref{rem1},
\begin{align*}
    &~ \operatorname{dim} \operatorname{Rat}(\Gamma)\\
    =&~\operatorname{dim} Q\left( \boldsymbol{T}[X_1^{\pm}, \ldots, X_n^{\pm}] / \boldsymbol{E}_{\pm}(\boldsymbol{V}((f, g))) \right)\\
    =&~ \operatorname{dim} \boldsymbol{T}[X_1^{\pm}, \ldots, X_n^{\pm}] / \boldsymbol{E}_{\pm}(\boldsymbol{V}((f, g)))\\
    =&~ \operatorname{dim}\boldsymbol{V}((f, g)) + 1\\
    =&~ \begin{cases} 1 \quad \text{if $\Gamma$ is one point,} \\ 2 \quad \text{otherwise} \end{cases}
\end{align*}
hold  by Lemmas~\ref{lem:cancellative}, \ref{lem6}, \cite[Proposition~3.9(ii),(iii)]{Joo=Mincheva1} and Theorem~\ref{thm:main1-2}.
\end{proof}

The argument above, \cite[Theorem~1.1]{JuAe7}, Lemma~\ref{lem6} and Theorem~\ref{thm:main1-2} also give the following: 

\begin{cor}
	\label{cor8}
Let $V$ be a finite union of $\boldsymbol{R}$-rational polyhedral sets.
If $V$ is nonempty, then the equality
\begin{align*}
    \operatorname{dim} \overline{\boldsymbol{T}(X_1, \ldots, X_n)} / \boldsymbol{E}(V) = \operatorname{dim}V + 1
\end{align*}
holds.
\end{cor}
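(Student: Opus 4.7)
The plan is to reduce from the statement about $\overline{\boldsymbol{T}(X_1, \ldots, X_n)} / \boldsymbol{E}(V)$ to the analogous statement about $\boldsymbol{T}[X_1^{\pm}, \ldots, X_n^{\pm}] / \boldsymbol{E}_{\pm}(V)$, and then invoke Theorem~\ref{thm:main1-2} directly. This essentially generalizes the final chain of equalities in the proof of Theorem~\ref{thm:main2-2} from the specific case arising from a tropical curve to any nonempty finite union of $\boldsymbol{R}$-rational polyhedral sets.

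First, since $V$ is nonempty, Lemma~\ref{lem5} (together with the discussion preceding it) yields a $\boldsymbol{T}$-algebra isomorphism $\overline{\boldsymbol{T}(X_1, \ldots, X_n)} / \boldsymbol{E}(V) \cong Q\bigl(\boldsymbol{T}[X_1^{\pm}, \ldots, X_n^{\pm}] / \boldsymbol{E}_{\pm}(V)\bigr)$. By Lemma~\ref{lem:cancellative} the $\boldsymbol{B}$-algebra $\boldsymbol{T}[X_1^{\pm}, \ldots, X_n^{\pm}] / \boldsymbol{E}_{\pm}(V)$ is cancellative, so by \cite[Proposition~3.9(ii),(iii)]{Joo=Mincheva1} its Krull dimension coincides with that of its semifield of fractions. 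It therefore suffices to compute $\operatorname{dim}\boldsymbol{T}[X_1^{\pm}, \ldots, X_n^{\pm}] / \boldsymbol{E}_{\pm}(V)$.

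Second, I would check that the hypotheses of Theorem~\ref{thm:main1-2} apply to $C \colonequals \boldsymbol{E}_{\pm}(V)$. Since $V$ is nonempty, $C$ is proper. Combining \cite[Theorem~1.1]{JuAe7} with the construction in the proof of Lemma~\ref{lem6} produces a single pair $(f, g) \in \boldsymbol{E}_{\pm}(V)$ such that $\boldsymbol{V}((f, g)) = V$ and $\boldsymbol{V}((f_{\boldsymbol{B}}, g_{\boldsymbol{B}})) = \operatorname{rec}(V)$, and moreover this pair generates $\boldsymbol{E}_{\pm}(V)$, so by Lemma~\ref{lem2} the pair $(f_{\boldsymbol{B}}, g_{\boldsymbol{B}})$ generates $C_{\boldsymbol{B}}$. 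Consequently $\{(f, g)\}$ and $\{(f_{\boldsymbol{B}}, g_{\boldsymbol{B}})\}$ are finite congruence tropical bases of $C$ and $C_{\boldsymbol{B}}$ respectively, and at the same time $\boldsymbol{V}(C) = V$ and $\boldsymbol{V}(C_{\boldsymbol{B}}) = \operatorname{rec}(V)$.

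Third, Theorem~\ref{thm:main1-2} then gives
\begin{align*}
\operatorname{dim}\boldsymbol{T}[X_1^{\pm}, \ldots, X_n^{\pm}]/\boldsymbol{E}_{\pm}(V) = \max\{\operatorname{dim} V + 1,\ \operatorname{dim} \operatorname{rec}(V)\} = \operatorname{dim} V + 1,
\end{align*}
where the second equality uses that $\operatorname{dim}\operatorname{rec}(V) \le \operatorname{dim} V$ (noted in Subsection~\ref{subsection2.1}). Combining with the first step yields the desired formula. There is no real obstacle here: every ingredient has already been established in the preceding lemmas and in Theorem~\ref{thm:main1-2}, and the only point requiring care is the bookkeeping of the identification $\boldsymbol{V}(\boldsymbol{E}_{\pm}(V)) = V$ (immediate because the explicit generator $(f, g)$ cuts out exactly $V$) and the verification that both $C$ and $C_{\boldsymbol{B}}$ admit the required single-element tropical bases.
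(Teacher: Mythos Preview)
Your proposal is correct and follows essentially the same route the paper indicates: the paper states Corollary~\ref{cor8} as a consequence of ``the argument above, \cite[Theorem~1.1]{JuAe7}, Lemma~\ref{lem6} and Theorem~\ref{thm:main1-2}'', and you have spelled out precisely that chain---pass to $\boldsymbol{E}_{\pm}(V)$ via Lemma~\ref{lem5} and \cite[Proposition~3.9]{Joo=Mincheva1}, use Lemma~\ref{lem6} and \cite[Theorem~1.1]{JuAe7} to produce single-element tropical bases for $C$ and $C_{\boldsymbol{B}}$ with $\boldsymbol{V}(C)=V$ and $\boldsymbol{V}(C_{\boldsymbol{B}})=\operatorname{rec}(V)$, and then apply Theorem~\ref{thm:main1-2} together with $\operatorname{dim}\operatorname{rec}(V)\le\operatorname{dim}V$. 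One small remark: you do not actually need the generation claim for $\boldsymbol{E}_{\pm}(V)$ and the detour through Lemma~\ref{lem2}, since Lemma~\ref{lem6} already asserts $\boldsymbol{V}(\boldsymbol{E}_{\pm}(V)_{\boldsymbol{B}})=\operatorname{rec}(V)$ directly, and $(f_{\boldsymbol{B}},g_{\boldsymbol{B}})\in C_{\boldsymbol{B}}$ with $\boldsymbol{V}((f_{\boldsymbol{B}},g_{\boldsymbol{B}}))=\operatorname{rec}(V)$ already makes it a finite congruence tropical basis.
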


Conditions $(3)$ and $(5)$ in Corollary~\ref{cor:curve} are only depend on the $\boldsymbol{T}$-algebra structure of $S$, but are written in the terms of geometric object $\boldsymbol{V}(\operatorname{Ker}(\psi))$.
The remains of this paper are devoted to replacing these two conditions with algebraic conditions.

We consider the relation between the connectivity of $V \subset \boldsymbol{R}^n$ and the $\boldsymbol{T}$-algebra structure of $\overline{\boldsymbol{T}(X_1, \ldots, X_n)}/ \boldsymbol{E}(V)$. 

\begin{dfn}[cf.~{\cite[Chapter~2]{Golan}}]
	\label{dfn:pseudodirect product}
\upshape{
For two semifields $S_1$ and $S_2$ over $\boldsymbol{T}$, we define
\begin{align*}
S_1 \bowtie S_2 \colonequals \{ (s_1, s_2) \in (S_1 \setminus \{ 0_{S_1} \}) \times (S_2 \setminus \{ 0_{S_2} \}) \} \cup \{ (0_{S_1}, 0_{S_2}) \}.
\end{align*}
}
We write it also $\bowtie_{i = 1}^2 S_i$.
For any number (possibly infinite) of semifields over $\boldsymbol{T}$, we define the same things.
\end{dfn}

The following three lemmas clearly hold.

\begin{lemma}
    \label{lem7}
With two natural operations
\begin{align*}
(s_1, s_2) + (t_1, t_2) &\colonequals (s_1 + t_1, s_2 + t_2),\\
(s_1, s_2) \cdot (t_1, t_2) &\colonequals (s_1 \cdot t_1, s_2 \cdot t_2),
\end{align*}
the set $S_1 \bowtie S_2$ becomes a semifield over $\boldsymbol{T}$ with the diagonal semiring homomorphism $\boldsymbol{T} \hookrightarrow S_1 \bowtie S_2; t \mapsto (t, t)$.
\end{lemma}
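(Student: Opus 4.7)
The plan is to verify, in order, (i) that $S_1 \bowtie S_2$ is closed under the two proposed operations, (ii) the semiring axioms together with the identification of the additive and multiplicative identities, (iii) that every element other than $(0_{S_1}, 0_{S_2})$ has a multiplicative inverse, and (iv) that the diagonal map $t \mapsto (t, t)$ lands in $S_1 \bowtie S_2$ and is an injective semiring homomorphism.

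The only point with genuine content is step (i). Closure under multiplication is immediate because semifields admit no zero divisors: if $s_i, t_i \ne 0_{S_i}$, then $s_i \cdot t_i \ne 0_{S_i}$, and the case in which either factor is $(0_{S_1}, 0_{S_2})$ is trivial. For addition, the key observation is that a semifield over $\boldsymbol{T}$ is in particular a $\boldsymbol{B}$-algebra, hence additively idempotent, so $0_{S_i}$ is the minimum element of the canonical partial order. This forces $s_i + t_i = 0_{S_i}$ to imply $s_i = t_i = 0_{S_i}$. Consequently, the componentwise sum of two elements of $S_1 \bowtie S_2$ equals $(0_{S_1}, 0_{S_2})$ exactly when both summands do, and otherwise has both components nonzero; either way, it lies in $S_1 \bowtie S_2$.

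Steps (ii)--(iv) are then essentially formal. All semiring axioms hold componentwise, with $(0_{S_1}, 0_{S_2})$ absorbing for multiplication and $(1_{S_1}, 1_{S_2})$ serving as the multiplicative identity; the nontriviality $(0_{S_1}, 0_{S_2}) \ne (1_{S_1}, 1_{S_2})$ is inherited from either factor. Any $(s_1, s_2) \ne (0_{S_1}, 0_{S_2})$ has, by definition, both components nonzero in their respective semifields and hence invertible, and the componentwise pair of inverses is a two-sided multiplicative inverse in $S_1 \bowtie S_2$. For (iv), writing $\iota_i \colon \boldsymbol{T} \hookrightarrow S_i$ for the structure maps, injectivity of each $\iota_i$ yields $\iota_i(t) = 0_{S_i}$ if and only if $t = -\infty$, so the diagonal map sends $-\infty$ to $(0_{S_1}, 0_{S_2})$ and every other $t$ to a pair with both components nonzero; thus it lands in $S_1 \bowtie S_2$, and preservation of $+$, $\cdot$, $0$, $1$ is inherited componentwise. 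Throughout, the only mild obstacle is the closure of addition, which is resolved by the additive idempotency of semifields over $\boldsymbol{T}$.
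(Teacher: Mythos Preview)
Your proof is correct. The paper itself offers no argument for this lemma beyond the blanket assertion ``the following three lemmas clearly hold,'' so there is nothing to compare against; your write-up simply supplies the verification the authors omit. The one nontrivial point you identify---closure under addition, handled via additive idempotency of semifields over $\boldsymbol{T}$ so that $s_i + t_i = 0_{S_i}$ forces $s_i = t_i = 0_{S_i}$---is exactly the right observation, and the remaining steps are routine as you say.
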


\begin{lemma}
    \label{lem8}
The projection $\pi_i \colon S_1 \bowtie S_2 \twoheadrightarrow S_i ; (s_1, s_2) \mapsto s_i$ is a $\boldsymbol{T}$-algebra homomorphism by the diagonal semiring homomorphism $\boldsymbol{T} \hookrightarrow S_1 \bowtie S_2; t \mapsto (t, t)$.
\end{lemma}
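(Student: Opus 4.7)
The verification is entirely routine once Lemma~\ref{lem7} is in hand, so the plan is just to check the semiring homomorphism axioms, then check compatibility with the structure maps from $\boldsymbol{T}$, and finally observe surjectivity.

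First I would check that $\pi_i$ is a semiring homomorphism. By the componentwise definitions of addition and multiplication on $S_1 \bowtie S_2$ given in Lemma~\ref{lem7}, one has
\begin{align*}
\pi_i((s_1, s_2) + (t_1, t_2)) &= \pi_i((s_1 + t_1, s_2 + t_2)) = s_i + t_i = \pi_i(s_1, s_2) + \pi_i(t_1, t_2),
\end{align*}
and the analogous identity for multiplication; the zero and identity of $S_1 \bowtie S_2$ are $(0_{S_1}, 0_{S_2})$ and $(1_{S_1}, 1_{S_2})$, which $\pi_i$ sends to $0_{S_i}$ and $1_{S_i}$.

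Next I would check compatibility with the $\boldsymbol{T}$-algebra structures. Let $\iota \colon \boldsymbol{T} \hookrightarrow S_1 \bowtie S_2$ denote the diagonal map $t \mapsto (t, t)$ from Lemma~\ref{lem7}, and $\iota_i \colon \boldsymbol{T} \hookrightarrow S_i$ the fixed inclusion making $S_i$ a semifield over $\boldsymbol{T}$. Then $(\pi_i \circ \iota)(t) = \pi_i(t, t) = t = \iota_i(t)$, so $\pi_i$ intertwines the structure maps, i.e.\ is a $\boldsymbol{T}$-algebra homomorphism. Surjectivity is immediate: any nonzero $s \in S_1$ equals $\pi_1((s, 1_{S_2}))$, and $0_{S_1} = \pi_1((0_{S_1}, 0_{S_2}))$, and symmetrically for $i = 2$.

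There is no genuine obstacle here; the only potentially subtle point, which is already dispatched in Lemma~\ref{lem7}, is that the componentwise operations really do stay inside $S_1 \bowtie S_2$. One uses that each $S_i$ is a semifield over $\boldsymbol{T}$, hence additively idempotent (so a sum of nonzero elements is nonzero) and with multiplicatively invertible nonzero elements (so a product of nonzero elements is nonzero); consequently, neither operation produces a pair with exactly one zero component.
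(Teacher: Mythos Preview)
Your verification is correct and matches the paper's treatment: the paper simply states that this lemma ``clearly hold[s]'' without any written proof, so your routine check of the homomorphism axioms, the compatibility $\pi_i\circ\iota=\iota_i$, and surjectivity is exactly what is implicitly intended.
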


\begin{lemma}
    \label{lem9}
Both $S_1$ and $S_2$ are finitely generated as semifields over $\boldsymbol{T}$ if and only if so is $S_1 \bowtie S_2$ via the diagonal semiring homomorphism $\boldsymbol{T} \hookrightarrow S_1 \bowtie S_2; t \mapsto (t, t)$.
\end{lemma}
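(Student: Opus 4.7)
The plan is to prove the biconditional by treating the two directions separately, with $(\Leftarrow)$ being immediate from Lemma~\ref{lem8} and $(\Rightarrow)$ requiring an explicit construction of a finite generating set for $S_1 \bowtie S_2$.

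For $(\Leftarrow)$, suppose $S_1 \bowtie S_2$ is finitely generated as a $\boldsymbol{T}$-algebra via the diagonal. The projection $\pi_i \colon S_1 \bowtie S_2 \twoheadrightarrow S_i$ of Lemma~\ref{lem8} is a surjective $\boldsymbol{T}$-algebra homomorphism: for any $s \in S_i \setminus \{0\}$, the pair having $s$ in the $i$-th coordinate and $1$ in the other lies in $S_1 \bowtie S_2$ and maps to $s$, while $(0_{S_1}, 0_{S_2}) \mapsto 0$. Hence the image of a finite generating set of $S_1 \bowtie S_2$ is one of $S_i$.

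For $(\Rightarrow)$, suppose $S_1$ is generated by $u_1, \ldots, u_p$ over $\boldsymbol{T}$ and $S_2$ by $v_1, \ldots, v_q$. Fix any $\tau \in \boldsymbol{T}^{\times}$ with $\tau \neq 1_{\boldsymbol{T}}$ (e.g.\ $\tau = 1 \in \boldsymbol{R}$). I propose the finite set
\[
G \colonequals \{(u_i, 1_{S_2}) \mid 1 \le i \le p\} \cup \{(1_{S_1}, v_j) \mid 1 \le j \le q\} \cup \{W\}, \qquad W \colonequals (1_{S_1}, \tau),
\]
as a generating set. Writing $A$ for the sub-$\boldsymbol{T}$-algebra of $S_1 \bowtie S_2$ that $G$ generates via the diagonal, the argument that $A = S_1 \bowtie S_2$ goes in three steps. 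First, show $\boldsymbol{T} \bowtie \boldsymbol{T} \subset A$ using only $W$ and the diagonal: the elements $(t, t) \odot W^{\odot n} = (t, t \odot \tau^{\odot n})$ lie in $A$ for every $t \in \boldsymbol{T}$ and $n \in \boldsymbol{Z}$, and combining them through $\oplus$ gives, for arbitrary $r \in \boldsymbol{R}_{\ge 0}$, the identity
\[
(1_\boldsymbol{T}, 1_\boldsymbol{T}) \oplus \bigl((r - n\tau, r - n\tau) \odot W^{\odot n}\bigr) = (1_\boldsymbol{T}, r)
\]
as soon as $n \in \boldsymbol{Z}$ is chosen so that $r - n\tau \le 0$; negative offsets are produced by $\odot^{-1}$, and then $(t_1, t_2) = (t_1, t_1) \odot (1_\boldsymbol{T}, t_2 - t_1)$ realises every pair in $\boldsymbol{T} \bowtie \boldsymbol{T}$. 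Second, given any $s_1 \in S_1 \setminus \{0\}$, pick $F_1 \in \overline{\boldsymbol{T}(X_1, \ldots, X_p)}$ with $F_1(u_1, \ldots, u_p) = s_1$ and substitute $(u_i, 1_{S_2})$ for $X_i$ (with $\boldsymbol{T}$-coefficients replaced by their diagonal images); evaluating componentwise yields $F_1((u_1, 1_{S_2}), \ldots, (u_p, 1_{S_2})) = (s_1, c_1)$ for some $c_1 \in \boldsymbol{T}^{\times}$ (the second coordinate being $F_1(1_{\boldsymbol{T}}, \ldots, 1_{\boldsymbol{T}})$, a tropical rational expression in scalars). Multiplying by $(1_{S_1}, c_1^{\odot (-1)}) \in \boldsymbol{T} \bowtie \boldsymbol{T} \subset A$ produces $(s_1, 1_{S_2}) \in A$; symmetrically $(1_{S_1}, s_2) \in A$ for every $s_2 \in S_2 \setminus \{0\}$. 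Third, $(s_1, s_2) = (s_1, 1_{S_2}) \odot (1_{S_1}, s_2) \in A$ for any nonzero $s_1, s_2$, and $(0, 0) \in A$ trivially, so $A = S_1 \bowtie S_2$.

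The main obstacle, as I see it, lies in the first step. The subalgebra of $A$ generated by $W$ and the diagonal using only $\odot$ and $\odot^{-1}$ is the countable lattice $\{(t, t \odot \tau^{\odot n}) \mid t \in \boldsymbol{T}, n \in \boldsymbol{Z}\}$, and the full uncountable $\boldsymbol{T} \bowtie \boldsymbol{T}$ becomes visible only through the interplay with $\oplus = \max$, which mixes the two coordinates asymmetrically. Once $\boldsymbol{T} \bowtie \boldsymbol{T} \subset A$ is secured, the subsequent reductions to $(s_1, 1_{S_2})$, $(1_{S_1}, s_2)$, and finally $(s_1, s_2)$ are routine componentwise computations, so the technical heart of the proof is concentrated in the one-variable identity above.
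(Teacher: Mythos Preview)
The paper itself supplies no argument for this lemma; it is one of three lemmas preceded only by the sentence ``The following three lemmas clearly hold.''  Your write-up therefore does not merely reproduce an existing proof but actually fills a gap the authors left to the reader.

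Your argument is correct.  The $(\Leftarrow)$ direction via the surjective projections $\pi_i$ is exactly right.  For $(\Rightarrow)$ your three-step plan works, and you have correctly located the only nontrivial point: the extra generator $W=(1_{S_1},\tau)$ is genuinely needed (take $S_1=S_2=\boldsymbol{T}$ to see that $\{(u_i,1_{S_2})\}\cup\{(1_{S_1},v_j)\}$ alone can be empty and then generates only the diagonal), and the identity
\[
(1_{\boldsymbol{T}},1_{\boldsymbol{T}})\oplus\bigl((r-n\tau,r-n\tau)\odot W^{\odot n}\bigr)=(1_{\boldsymbol{T}},r)
\]
for $r\ge 0$ and $n\in\boldsymbol{Z}$ with $r-n\tau\le 0$ is exactly what makes $\boldsymbol{T}\bowtie\boldsymbol{T}\subset A$.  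Step~2 is then routine; one small remark is that ``evaluating $F_1$ at $(u_i,1_{S_2})$'' is cleanest if you pick a concrete representative $f_1\odot g_1^{\odot(-1)}$ with $f_1,g_1\in\boldsymbol{T}[X_1,\dots,X_p]$, so that no question of well-definedness on $\overline{\boldsymbol{T}(X_1,\dots,X_p)}$ arises --- you only need that \emph{some} element of $A$ has first coordinate $s_1$ and second coordinate in $\boldsymbol{R}$, and any representative gives such an element because $g_1(0,\dots,0)\in\boldsymbol{R}$ forces $g_1((u_i,1_{S_2}))\ne(0,0)$ in $S_1\bowtie S_2$.
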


Let $S_1$ and $S_2$ be semifields over $\boldsymbol{T}$ and $S \colonequals S_1 \bowtie S_2$, which is a semifield over $\boldsymbol{T}$ with the diagonal semiring homomorphism $d \colon \boldsymbol{T} \hookrightarrow S; t \mapsto (t, t)$ by Lemma~\ref{lem7}.
Assume that $S$ is finitely generated as a semifield over $\boldsymbol{T}$ via $d$.
Let $\psi$ be a surjective  $\boldsymbol{T}$-algebra homomorphism from a tropical rational function semifield $\overline{\boldsymbol{T}(X_1, \ldots, X_n)} = \overline{\boldsymbol{T}(\boldsymbol{X})}$ to $S$ and $\pi_i \colon S \twoheadrightarrow S_i$ the projection, which is a $\boldsymbol{T}$-algebra homomorphism by Lemma~\ref{lem8}.
Then $\psi$ and the composition $\psi_i \colonequals \pi_i \circ \psi$ induce the natural $\boldsymbol{T}$-algebra isomorphisms $\overline{\psi} \colon S \to \overline{\boldsymbol{T}(\boldsymbol{X})} / \operatorname{Ker}(\psi)$ and $\overline{\psi_i} \colon S_i \to \overline{\boldsymbol{T}(\boldsymbol{X})} / \operatorname{Ker}(\psi_i)$ such that $\overline{\psi} \circ \psi = \pi_{\operatorname{Ker}(\psi)}$ and $\overline{\psi_i} \circ \psi_i = \pi_{\operatorname{Ker}(\psi_i)}$ hold, where $\pi_{\operatorname{Ker}(\psi)} \colon \overline{\boldsymbol{T}(\boldsymbol{X})} \twoheadrightarrow \overline{\boldsymbol{T}(\boldsymbol{X})} / \operatorname{Ker}(\psi)$ and $\pi_{\operatorname{Ker}(\psi_i)} \colon \overline{\boldsymbol{T}(\boldsymbol{X})} \twoheadrightarrow \overline{\boldsymbol{T}(\boldsymbol{X})} / \operatorname{Ker}(\psi_i)$ are the natural surjective $\boldsymbol{T}$-algebra homomorphisms.
These $\boldsymbol{T}$-algebra homomorphisms also induce a surjective $\boldsymbol{T}$-algebra homomorphism $\overline{\pi_i} \colon \overline{\boldsymbol{T}(\boldsymbol{X})} / \operatorname{Ker}(\psi) \twoheadrightarrow \overline{\boldsymbol{T}(\boldsymbol{X})} / \operatorname{Ker}(\psi_i)$ such that $\pi_i \circ \overline{\psi_i} = \overline{\pi_i} \circ \overline{\psi}$ holds for each $i = 1, 2$.

\begin{lemma}
    \label{lem10}
In the above setting, $\operatorname{Ker}(\psi) = \bigcap_{i = 1}^2 \operatorname{Ker}(\psi_i)$ holds.
In particular, $\boldsymbol{V}(\operatorname{Ker}(\psi)) \supset \bigcup_{i = 1}^2 \boldsymbol{V}(\operatorname{Ker}(\psi_i))$ holds.
\end{lemma}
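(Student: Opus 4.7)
The plan is to prove both set inclusions and then deduce the variety statement from the first inclusion by the elementary observation that $\boldsymbol{V}(\cdot)$ is order-reversing with respect to inclusion of congruences.

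First, for the inclusion $\operatorname{Ker}(\psi) \subset \bigcap_{i = 1}^2 \operatorname{Ker}(\psi_i)$, I would just apply the definition: if $(f, g) \in \operatorname{Ker}(\psi)$, then $\psi(f) = \psi(g)$, so applying $\pi_i$ to both sides gives $\psi_i(f) = \pi_i(\psi(f)) = \pi_i(\psi(g)) = \psi_i(g)$ for each $i = 1, 2$, hence $(f, g) \in \operatorname{Ker}(\psi_i)$. This half requires nothing beyond $\psi_i = \pi_i \circ \psi$.

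For the reverse inclusion $\bigcap_{i = 1}^2 \operatorname{Ker}(\psi_i) \subset \operatorname{Ker}(\psi)$, I would take $(f, g) \in \bigcap_{i = 1}^2 \operatorname{Ker}(\psi_i)$ and write the images of $f$ and $g$ under $\psi$ in component form: $\psi(f) = (a_1, a_2)$ and $\psi(g) = (b_1, b_2)$ as elements of $S_1 \bowtie S_2 \subset S_1 \times S_2$. The hypothesis $(f, g) \in \operatorname{Ker}(\psi_i)$ says $\psi_i(f) = \psi_i(g)$, which by the identity $\psi_i = \pi_i \circ \psi$ translates to $a_i = b_i$ for $i = 1, 2$. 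Therefore $\psi(f) = \psi(g)$ as ordered pairs, so $(f, g) \in \operatorname{Ker}(\psi)$. The slightly unusual set-theoretic definition of the pseudodirect product (excluding pairs with exactly one zero entry) plays no role here, since both $\psi(f)$ and $\psi(g)$ are automatically elements of $S_1 \bowtie S_2$ and we are only comparing them coordinatewise.

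For the final assertion about congruence varieties, I would apply $\boldsymbol{V}$ to the inclusion $\operatorname{Ker}(\psi) = \bigcap_{i=1}^2 \operatorname{Ker}(\psi_i) \subset \operatorname{Ker}(\psi_i)$ for each $i$. Since $\boldsymbol{V}$ reverses inclusions by definition (more equations to satisfy yield a smaller set of solutions), this gives $\boldsymbol{V}(\operatorname{Ker}(\psi)) \supset \boldsymbol{V}(\operatorname{Ker}(\psi_i))$ for each $i$, and taking the union over $i = 1, 2$ yields the desired containment. There is no real obstacle in this lemma; it is a direct unpacking of definitions and the only subtlety worth flagging is that the pseudodirect product $\bowtie$ is a subset of the full direct product, but this does not obstruct componentwise comparison of the images of $\psi$.
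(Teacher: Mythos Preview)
Your proof is correct and follows essentially the same approach as the paper: both inclusions are verified directly from the definitions of $\psi_i = \pi_i \circ \psi$ and the componentwise structure of $S_1 \bowtie S_2$, and the variety statement follows by order-reversal of $\boldsymbol{V}$. The paper's proof is terser (it calls the inclusion $\operatorname{Ker}(\psi) \subset \bigcap_i \operatorname{Ker}(\psi_i)$ ``clear'' and omits the variety argument), but your added detail is accurate and the route is the same.
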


\begin{proof}
For any $(f, g) \in \bigcap_{i = 1}^2 \operatorname{Ker}(\psi_i)$, by definition, $\psi_1(f) = \psi_1(g)$ and $\psi_2(f) = \psi_2(g)$, and hence $\psi(f) = \psi(g)$.
This means that $\operatorname{Ker}(\psi) \supset \bigcap_{i = 1}^2 \operatorname{Ker}(\psi_i)$ holds.
The converse inclusion is clear.
\end{proof}

\begin{lemma}
    \label{lem11}
In the above setting, $\operatorname{Ker}(\psi_1) \rtimes \operatorname{Ker}(\psi_2) \subset \operatorname{Ker}(\psi)$ holds.
In particular, $\boldsymbol{V}(\operatorname{Ker}(\psi)) \subset \bigcup_{i = 1}^2 \boldsymbol{V}(\operatorname{Ker}(\psi_i))$ holds.
\end{lemma}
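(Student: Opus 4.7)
The plan is to first establish the algebraic inclusion $\operatorname{Ker}(\psi_1) \rtimes \operatorname{Ker}(\psi_2) \subset \operatorname{Ker}(\psi)$ by exploiting the componentwise structure of $S = S_1 \bowtie S_2$, and then to deduce the geometric inclusion by contraposition using the tropical principle that a dominant term cannot be cancelled.

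First I would verify the congruence inclusion. Since $\operatorname{Ker}(\psi)$ is itself a congruence, it suffices to show $\alpha \rtimes \beta \in \operatorname{Ker}(\psi)$ for every $\alpha = (\alpha_1, \alpha_2) \in \operatorname{Ker}(\psi_1)$ and $\beta = (\beta_1, \beta_2) \in \operatorname{Ker}(\psi_2)$. Writing $\psi(\alpha_j) = (a_j, b_j)$ and $\psi(\beta_j) = (c_j, d_j)$ in $S_1 \bowtie S_2$, the defining conditions translate to $a_1 = a_2$ (from $\psi_1(\alpha_1) = \psi_1(\alpha_2)$) and $d_1 = d_2$ (from $\psi_2(\beta_1) = \psi_2(\beta_2)$). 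Since by Lemma~\ref{lem7} the operations on $S$ are coordinate-wise, both $\psi(\alpha_1 \beta_1 + \alpha_2 \beta_2)$ and $\psi(\alpha_1 \beta_2 + \alpha_2 \beta_1)$ collapse to the single pair $(a_1(c_1 + c_2),\, (b_1 + b_2) d_1)$ via a one-line distributivity check. Hence $\alpha \rtimes \beta \in \operatorname{Ker}(\psi)$, which gives the first assertion.

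For the geometric inclusion I argue by contraposition. Let $x \in \boldsymbol{R}^n$ satisfy $x \notin \boldsymbol{V}(\operatorname{Ker}(\psi_1)) \cup \boldsymbol{V}(\operatorname{Ker}(\psi_2))$. Pick $\alpha \in \operatorname{Ker}(\psi_1)$ with $\alpha_1(x) \neq \alpha_2(x)$ and $\beta \in \operatorname{Ker}(\psi_2)$ with $\beta_1(x) \neq \beta_2(x)$, and relabel so that $\alpha_1(x) > \alpha_2(x)$ and $\beta_1(x) > \beta_2(x)$. These strict inequalities force $\alpha_1(x), \beta_1(x) \in \boldsymbol{R}$, so the tropical product $\alpha_1(x) \odot \beta_1(x)$ is a finite real number that strictly dominates each of $\alpha_1(x) \odot \beta_2(x)$, $\alpha_2(x) \odot \beta_1(x)$ and $\alpha_2(x) \odot \beta_2(x)$. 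Consequently $(\alpha_1 \beta_1 + \alpha_2 \beta_2)(x) = \alpha_1(x) \odot \beta_1(x)$ strictly exceeds $(\alpha_1 \beta_2 + \alpha_2 \beta_1)(x) = \max\{\alpha_1(x) \odot \beta_2(x),\, \alpha_2(x) \odot \beta_1(x)\}$ in $\boldsymbol{T}$, exhibiting the pair $\alpha \rtimes \beta \in \operatorname{Ker}(\psi)$ as a witness for $x \notin \boldsymbol{V}(\operatorname{Ker}(\psi))$.

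The only delicate point is handling the tropical $-\infty$ in the domination estimate, but this is defused by the observation that the larger side of a strict inequality in $\boldsymbol{T}$ must be a real number; everything else is a routine unpacking of definitions together with the componentwise arithmetic of $S_1 \bowtie S_2$ recorded in Lemma~\ref{lem7}. Conceptually, the operation $\rtimes$ is designed precisely so that $\alpha \rtimes \beta$ detects the combined failure of $\alpha$ to lie in $\operatorname{Ker}(\psi_1)$ at $x$ and of $\beta$ to lie in $\operatorname{Ker}(\psi_2)$ at $x$, which is exactly the content of the two paragraphs above.
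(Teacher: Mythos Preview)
Your proof is correct, but it takes a different route from the paper's. The paper dispatches the first inclusion in one line by citing the general fact (Proposition~2.2(i) of Jo\'{o}--Mincheva) that $E \rtimes F \subset E \cap F$ for any two congruences $E, F$, and then invoking Lemma~\ref{lem10} to identify $\operatorname{Ker}(\psi_1) \cap \operatorname{Ker}(\psi_2)$ with $\operatorname{Ker}(\psi)$; for the geometric inclusion it simply cites \cite[Proposition~3.8(3)]{JuAe6}. By contrast, you establish the first inclusion by a direct componentwise computation in $S_1 \bowtie S_2$ (which implicitly re-proves both cited facts in this special case), and you derive the geometric inclusion by an explicit tropical domination argument rather than by citation. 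Your approach is entirely self-contained and makes transparent why the twisted product $\rtimes$ is the right gadget here; the paper's approach is shorter but relies on the external references to do the work.
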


\begin{proof}
By \cite[Proposition~2.2(1)]{Joo=Mincheva2}, the inclusion $\operatorname{Ker}(\psi_1) \rtimes \operatorname{Ker}(\psi_2) \subset \operatorname{Ker}(\psi_i)$ holds, and hence $\operatorname{Ker}(\psi_1) \rtimes \operatorname{Ker}(\psi_2) \subset \bigcap_{i = 1}^2 \operatorname{Ker}(\psi_i) = \operatorname{Ker}(\psi)$ follows by Lemma~\ref{lem10}.
The last assertion is clear by \cite[Proposition~3.8(3)]{JuAe6}.
\end{proof}

Lemmas~\ref{lem10} and \ref{lem11} induce the following corollary:

\begin{cor}
    \label{cor:union}
In the above setting, $\boldsymbol{V}(\operatorname{Ker}(\psi)) = \bigcup_{i = 1}^2 \boldsymbol{V}(\operatorname{Ker}(\psi_i))$ holds.
\end{cor}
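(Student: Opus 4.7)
The plan is essentially to quote the two preceding lemmas and combine them, since the corollary's equality is literally the conjunction of the two inclusions already established. No new construction is needed.

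More explicitly, I would first invoke Lemma~\ref{lem10}, which records the containment $\boldsymbol{V}(\operatorname{Ker}(\psi)) \supset \bigcup_{i=1}^{2} \boldsymbol{V}(\operatorname{Ker}(\psi_i))$. This inclusion arises from the identity $\operatorname{Ker}(\psi) = \bigcap_{i=1}^{2} \operatorname{Ker}(\psi_i)$ (so in particular $\operatorname{Ker}(\psi) \subset \operatorname{Ker}(\psi_i)$ for each $i$) together with the order-reversing behaviour of the operator $\boldsymbol{V}(\cdot)$. Then I would invoke Lemma~\ref{lem11}, which supplies the reverse inclusion $\boldsymbol{V}(\operatorname{Ker}(\psi)) \subset \bigcup_{i=1}^{2} \boldsymbol{V}(\operatorname{Ker}(\psi_i))$ via the chain $\operatorname{Ker}(\psi_1) \rtimes \operatorname{Ker}(\psi_2) \subset \operatorname{Ker}(\psi)$ and \cite[Proposition~3.8(3)]{JuAe6}, which converts the $\rtimes$-containment into the desired union of congruence varieties.

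Combining the two inclusions yields the equality. Since both directions are already in hand, there is no real obstacle; the only thing to be careful about is the bookkeeping, namely that $\psi$, $\psi_1$, $\psi_2$ all have the same source $\overline{\boldsymbol{T}(\boldsymbol{X})}$ so that the congruence varieties $\boldsymbol{V}(\operatorname{Ker}(\psi))$ and $\boldsymbol{V}(\operatorname{Ker}(\psi_i))$ all sit inside the same ambient $\boldsymbol{R}^n$ and the union makes sense as written. The proof itself will be a single sentence citing the two lemmas.
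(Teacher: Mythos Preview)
Your proposal is correct and matches the paper's own argument: the corollary is stated immediately after Lemmas~\ref{lem10} and \ref{lem11} with the single remark that these two lemmas induce it, exactly as you describe.
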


\begin{lemma}
    \label{lem12}
In the above setting, $\bigcap_{i = 1}^2 \boldsymbol{V}(\operatorname{Ker}(\psi_i)) = \varnothing$ holds.
\end{lemma}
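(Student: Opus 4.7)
The plan is to argue by contradiction. Suppose there exists $\boldsymbol{x} \in \boldsymbol{V}(\operatorname{Ker}(\psi_1)) \cap \boldsymbol{V}(\operatorname{Ker}(\psi_2))$; I will show that evaluation at $\boldsymbol{x}$ then factors through both projections $\pi_1$ and $\pi_2$ simultaneously, which is incompatible with the fact that in $S=S_1\bowtie S_2$ each nonzero component can be chosen independently of the other. The main point requiring care — and really the only place where anything subtle happens — is in verifying that the induced maps $\overline{\mathrm{ev}}_i\colon S_i \to \boldsymbol{T}$ introduced below are genuine $\boldsymbol{T}$-algebra homomorphisms (equivalently, retractions of the structural inclusions $\iota_i$); once that is in place the contradiction is essentially forced.

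First I would fix the evaluation homomorphism. For any $\boldsymbol{x} \in \boldsymbol{R}^n$, every element of $\overline{\boldsymbol{T}(X_1, \ldots, X_n)}$ can be written as $f \odot g^{\odot(-1)}$ with $f, g \in \overline{\boldsymbol{T}[X_1, \ldots, X_n]} \setminus \{-\infty\}$, and both $f(\boldsymbol{x})$ and $g(\boldsymbol{x})$ are real numbers; hence evaluation at $\boldsymbol{x}$ is a well-defined $\boldsymbol{T}$-algebra homomorphism $\mathrm{ev}_{\boldsymbol{x}}\colon \overline{\boldsymbol{T}(X_1, \ldots, X_n)} \to \boldsymbol{T}$. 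By the definition of congruence variety, $\boldsymbol{x} \in \boldsymbol{V}(\operatorname{Ker}(\psi_i))$ means $\mathrm{ev}_{\boldsymbol{x}}$ is constant on $\operatorname{Ker}(\psi_i)$-classes, so using $\overline{\psi_i}$ it factors uniquely as $\mathrm{ev}_{\boldsymbol{x}} = \overline{\mathrm{ev}}_i \circ \psi_i$ for a semiring homomorphism $\overline{\mathrm{ev}}_i\colon S_i \to \boldsymbol{T}$. Writing $j\colon \boldsymbol{T} \hookrightarrow \overline{\boldsymbol{T}(X_1, \ldots, X_n)}$ and $\iota_i\colon \boldsymbol{T} \hookrightarrow S_i$ for the structural inclusions, the chain
\begin{align*}
\overline{\mathrm{ev}}_i \circ \iota_i = \overline{\mathrm{ev}}_i \circ \psi_i \circ j = \mathrm{ev}_{\boldsymbol{x}} \circ j = \operatorname{id}_{\boldsymbol{T}}
\end{align*}
(using that $\psi_i$ and $\mathrm{ev}_{\boldsymbol{x}}$ are $\boldsymbol{T}$-algebra homomorphisms) shows that $\overline{\mathrm{ev}}_i$ is a $\boldsymbol{T}$-algebra retraction of $\iota_i$.

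Next, to produce the contradiction, pick any $t \in \boldsymbol{T}$ with $t \neq 1_{\boldsymbol{T}}$, say the real number $1$. Since $\iota_1$ is injective, $\iota_1(t) \neq 0_{S_1}$, so the pair $(\iota_1(t), 1_{S_2})$ lies in $S = S_1 \bowtie S_2$. By surjectivity of $\psi$, choose $h \in \overline{\boldsymbol{T}(X_1, \ldots, X_n)}$ with $\psi(h) = (\iota_1(t), 1_{S_2})$. Computing $\mathrm{ev}_{\boldsymbol{x}}(h)$ through the two factorizations yields
\begin{align*}
\mathrm{ev}_{\boldsymbol{x}}(h) &= \overline{\mathrm{ev}}_1(\pi_1(\psi(h))) = \overline{\mathrm{ev}}_1(\iota_1(t)) = t, \\
\mathrm{ev}_{\boldsymbol{x}}(h) &= \overline{\mathrm{ev}}_2(\pi_2(\psi(h))) = \overline{\mathrm{ev}}_2(1_{S_2}) = 1_{\boldsymbol{T}},
\end{align*}
which contradicts the choice of $t$. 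Hence no point of $\boldsymbol{R}^n$ lies in both $\boldsymbol{V}(\operatorname{Ker}(\psi_1))$ and $\boldsymbol{V}(\operatorname{Ker}(\psi_2))$, and the intersection is empty as claimed.
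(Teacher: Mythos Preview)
Your proof is correct and follows essentially the same approach as the paper's: both use the surjectivity of $\psi$ to find an element mapping to $(1,0)\in S_1\bowtie S_2$ and then derive a contradiction by evaluating at a hypothetical common point, obtaining two different real values. The paper presents this more tersely (without naming the factorizations $\overline{\mathrm{ev}}_i$), but the underlying argument is identical.
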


\begin{proof}
Since $\psi$ is surjective, by the definition of $S$, there exists $f \in \overline{\boldsymbol{T}(\boldsymbol{X})}$ such that $\psi(f) = (1, 0)$.
By definition, $\pi_1(\psi(f)) = 1$ and $\pi_2(\psi(f)) = 0$ hold.
Hence $\pi_{\operatorname{Ker}(\psi_1)}(f) = \overline{\psi_1}(\pi_1(\psi(f))) = 1$ and $\pi_{\operatorname{Ker}(\psi_2)}(f) = \overline{\psi_2}(\pi_2(\psi(f))) = 0$ hold.
If $\bigcap_{i = 1}^2 \boldsymbol{V}(\operatorname{Ker}(\psi_i))$ contains a point $x$, then $1 = \pi_{\operatorname{Ker}(\psi_1)}(f)(x) = f(x) = \pi_{\operatorname{Ker}(\psi_2)}(f)(x) = 0$, which is a contradiction.
\end{proof}

Since $\boldsymbol{V}(\operatorname{Ker}(\psi_i))$ is closed, we have the following corollary by Corollary~\ref{cor:union} and Lemma~\ref{lem12}:

\begin{cor}
    \label{cor:connected}
In the above setting, $\boldsymbol{V}(\operatorname{Ker}(\psi))$ is the disjoint union $\bigsqcup_{i = 1}^2 \boldsymbol{V}(\operatorname{Ker}(\psi_i))$.
In particular, if both $\boldsymbol{V}(\operatorname{Ker}(\psi_1))$ and $\boldsymbol{V}(\operatorname{Ker}(\psi_2))$ are nonempty, then $\boldsymbol{V}(\operatorname{Ker}(\psi))$ is disconnected.
\end{cor}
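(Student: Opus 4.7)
The plan is to chain together the two immediately preceding results and add a short topological observation. Corollary~\ref{cor:union} supplies the equality $\boldsymbol{V}(\operatorname{Ker}(\psi)) = \bigcup_{i = 1}^2 \boldsymbol{V}(\operatorname{Ker}(\psi_i))$, while Lemma~\ref{lem12} asserts that $\bigcap_{i = 1}^2 \boldsymbol{V}(\operatorname{Ker}(\psi_i)) = \varnothing$. These two facts together give the disjoint union decomposition $\boldsymbol{V}(\operatorname{Ker}(\psi)) = \bigsqcup_{i = 1}^2 \boldsymbol{V}(\operatorname{Ker}(\psi_i))$, which is the first half of the statement.

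For the ``in particular'' clause, I would invoke the topological setup introduced in Subsection~\ref{subsection2.5}: each set of the form $\boldsymbol{V}(T)$ is closed in $\boldsymbol{R}^n$ by definition (and this closed-set topology agrees with the Euclidean one by \cite[Lemma~3.7.4]{Giansiracusa=Giansiracusa}). Consequently, both $\boldsymbol{V}(\operatorname{Ker}(\psi_1))$ and $\boldsymbol{V}(\operatorname{Ker}(\psi_2))$ are closed subsets of $\boldsymbol{R}^n$, and hence closed in the subspace topology on $\boldsymbol{V}(\operatorname{Ker}(\psi))$. Since the two sets partition $\boldsymbol{V}(\operatorname{Ker}(\psi))$ by the disjoint union just established, each is also the complement of the other inside $\boldsymbol{V}(\operatorname{Ker}(\psi))$, hence open in the subspace topology. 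If both $\boldsymbol{V}(\operatorname{Ker}(\psi_1))$ and $\boldsymbol{V}(\operatorname{Ker}(\psi_2))$ are nonempty, this furnishes a separation of $\boldsymbol{V}(\operatorname{Ker}(\psi))$ into two nonempty clopen pieces, so $\boldsymbol{V}(\operatorname{Ker}(\psi))$ is disconnected.

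There is essentially no real obstacle here: the corollary is a formal consequence of Corollary~\ref{cor:union}, Lemma~\ref{lem12}, and the elementary topological principle that a space which is the disjoint union of two nonempty closed subsets is disconnected. The only delicate point worth making explicit is which topology is meant, and this has already been pinned down in Subsection~\ref{subsection2.5}, so the proof reduces to a one-paragraph citation and an appeal to basic point-set topology.
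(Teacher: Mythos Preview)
Your proposal is correct and matches the paper's own approach exactly: the paper derives the corollary directly from Corollary~\ref{cor:union} and Lemma~\ref{lem12} together with the observation that each $\boldsymbol{V}(\operatorname{Ker}(\psi_i))$ is closed. You have simply spelled out the elementary point-set topology step in more detail than the paper does.
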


\begin{lemma}
    \label{lem13}
In the above setting, if $\operatorname{Ker}(\psi_i) = \boldsymbol{E}(\boldsymbol{V}(\operatorname{Ker}(\psi_i)))$ holds for $i = 1, 2$, then $\operatorname{Ker}(\psi) = \boldsymbol{E}(\boldsymbol{V}(\operatorname{Ker}(\psi)))$ holds.
\end{lemma}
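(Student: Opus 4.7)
The plan is to chain together three results that have already been established in the excerpt: Lemma~\ref{lem10}, which identifies $\operatorname{Ker}(\psi)$ with $\operatorname{Ker}(\psi_1) \cap \operatorname{Ker}(\psi_2)$; Corollary~\ref{cor:union}, which identifies $\boldsymbol{V}(\operatorname{Ker}(\psi))$ with $\boldsymbol{V}(\operatorname{Ker}(\psi_1)) \cup \boldsymbol{V}(\operatorname{Ker}(\psi_2))$; and the elementary but crucial fact that $\boldsymbol{E}$ sends unions to intersections.

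First I would record the pointwise identity: for any two subsets $V_1, V_2$ of $\boldsymbol{R}^n$, the equality $\boldsymbol{E}(V_1 \cup V_2) = \boldsymbol{E}(V_1) \cap \boldsymbol{E}(V_2)$ holds, because a pair $(f, g) \in \overline{\boldsymbol{T}(X_1, \ldots, X_n)}^2$ satisfies $f(x) = g(x)$ for every $x \in V_1 \cup V_2$ if and only if it does so separately on $V_1$ and on $V_2$. This is immediate from the definition of $\boldsymbol{E}$ and requires no extra machinery.

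Next I would combine everything in a short equational chain. Using Lemma~\ref{lem10} first, then the standing hypothesis $\operatorname{Ker}(\psi_i) = \boldsymbol{E}(\boldsymbol{V}(\operatorname{Ker}(\psi_i)))$ for $i = 1, 2$, then the union/intersection identity above, and finally Corollary~\ref{cor:union}:
\begin{align*}
\operatorname{Ker}(\psi) &= \operatorname{Ker}(\psi_1) \cap \operatorname{Ker}(\psi_2)\\
&= \boldsymbol{E}(\boldsymbol{V}(\operatorname{Ker}(\psi_1))) \cap \boldsymbol{E}(\boldsymbol{V}(\operatorname{Ker}(\psi_2)))\\
&= \boldsymbol{E}\!\left( \boldsymbol{V}(\operatorname{Ker}(\psi_1)) \cup \boldsymbol{V}(\operatorname{Ker}(\psi_2)) \right)\\
&= \boldsymbol{E}(\boldsymbol{V}(\operatorname{Ker}(\psi))).
\end{align*}

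There is essentially no obstacle here: the lemma is a direct formal consequence of the previously established description of $\boldsymbol{V}(\operatorname{Ker}(\psi))$ as a union of the component congruence varieties together with the order-reversing Galois correspondence between subsets of $\boldsymbol{R}^n$ and congruences of the form $\boldsymbol{E}(V)$. The only point that needs any care is making sure the union/intersection identity is stated for arbitrary subsets of $\boldsymbol{R}^n$, not just for polyhedral sets, but this is transparent from the definition of $\boldsymbol{E}$ and requires no hypothesis on $V_1$ or $V_2$.
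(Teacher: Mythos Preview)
Your proof is correct and uses exactly the same ingredients as the paper's proof: Corollary~\ref{cor:union}, the hypothesis $\operatorname{Ker}(\psi_i) = \boldsymbol{E}(\boldsymbol{V}(\operatorname{Ker}(\psi_i)))$, and the identification of $\operatorname{Ker}(\psi)$ with $\operatorname{Ker}(\psi_1)\cap\operatorname{Ker}(\psi_2)$. The only difference is presentational: the paper proves the nontrivial inclusion $\boldsymbol{E}(\boldsymbol{V}(\operatorname{Ker}(\psi))) \subset \operatorname{Ker}(\psi)$ by an element chase (take $(f,g)$, use Corollary~\ref{cor:union} to land in each $\boldsymbol{E}(\boldsymbol{V}(\operatorname{Ker}(\psi_i)))=\operatorname{Ker}(\psi_i)$, conclude $\psi(f)=\psi(g)$), whereas you package the same steps into an equational chain via the general identity $\boldsymbol{E}(V_1\cup V_2)=\boldsymbol{E}(V_1)\cap\boldsymbol{E}(V_2)$ and an explicit appeal to Lemma~\ref{lem10}.
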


\begin{proof}
For any $(f, g) \in \boldsymbol{E}(\boldsymbol{V}(\operatorname{Ker}(\psi)))$ and $x \in \boldsymbol{V}(\operatorname{Ker}(\psi))$, the value $f(x)$ coincides with $g(x)$.
Thus $(f, g) \in \boldsymbol{E}(\boldsymbol{V}(\operatorname{Ker}(\psi_i))) = \operatorname{Ker}(\psi_i)$ by Corollary~\ref{cor:union}.
This means that $\psi_i(f) = \psi_i(g)$, and thus $\psi(f) = \psi(g)$.
In conclusion, $(f, g) \in \operatorname{Ker}(\psi)$ holds.
\end{proof}

\begin{lemma}
    \label{lem14}
In the above setting, $\overline{\boldsymbol{T}(\boldsymbol{X})} / \boldsymbol{E}(\boldsymbol{V}(\operatorname{Ker}(\psi)))$ is isomorphic to $\bowtie_{i = 1}^2 \overline{\boldsymbol{T}(\boldsymbol{X})} / \boldsymbol{E}(\boldsymbol{V}(\operatorname{Ker}(\psi_i)))$ as a $\boldsymbol{T}$-algebra with the diagonal semiring homomorphism $\boldsymbol{T} \hookrightarrow \bowtie_{i = 1}^2 \overline{\boldsymbol{T}(\boldsymbol{X})} / \boldsymbol{E}(\boldsymbol{V}(\operatorname{Ker}(\psi_i))); t \mapsto (t, t)$.
\end{lemma}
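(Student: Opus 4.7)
The plan is to show that the natural assignment
\begin{align*}
    \Phi \colon \overline{\boldsymbol{T}(\boldsymbol{X})}/\boldsymbol{E}(V) \to \left( \overline{\boldsymbol{T}(\boldsymbol{X})}/\boldsymbol{E}(V_1) \right) \bowtie \left( \overline{\boldsymbol{T}(\boldsymbol{X})}/\boldsymbol{E}(V_2) \right), \quad [f]_V \mapsto ([f]_{V_1}, [f]_{V_2}),
\end{align*}
is a well-defined $\boldsymbol{T}$-algebra isomorphism, where I set $V \colonequals \boldsymbol{V}(\operatorname{Ker}(\psi))$, $V_i \colonequals \boldsymbol{V}(\operatorname{Ker}(\psi_i))$, and $[f]_W$ denotes the class of $f$ modulo $\boldsymbol{E}(W)$. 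The geometric input is the disjoint union decomposition $V = V_1 \sqcup V_2$ supplied by Corollary~\ref{cor:connected}: the inclusions $V_i \subset V$ will drive well-definedness, and the equality $V = V_1 \cup V_2$ will drive injectivity.

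Well-definedness, injectivity, and the $\boldsymbol{T}$-algebra homomorphism property are all formal. If $(f, g) \in \boldsymbol{E}(V)$, then $f$ and $g$ agree on each $V_i$, so $(f, g) \in \boldsymbol{E}(V_i)$ and $\Phi$ descends to equivalence classes. Conversely, if $\Phi([f]_V) = \Phi([g]_V)$, then $f|_{V_i} = g|_{V_i}$ for $i = 1, 2$ and hence $f|_V = g|_V$, giving $[f]_V = [g]_V$. The $\boldsymbol{T}$-structure on the target is the diagonal one from Lemma~\ref{lem7}, and each coordinate of $\Phi$ is a natural quotient homomorphism, so $\Phi$ automatically respects the two operations and the diagonal $\boldsymbol{T}$-embedding.

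The first substantive step is to check that the image of $\Phi$ actually lies in $\bowtie$, that is, that $[f]_{V_1}$ and $[f]_{V_2}$ are either both zero or both nonzero. The key observation is the dichotomy that every element $f \in \overline{\boldsymbol{T}(\boldsymbol{X})}$ is either the additive identity $-\infty$ or everywhere real-valued on $\boldsymbol{R}^n$: a tropical polynomial function $\max_{\alpha}(c_\alpha + \alpha \cdot x) \in \overline{\boldsymbol{T}[X_1, \ldots, X_n]}$ attains a finite value at every $x$ as soon as some coefficient $c_\alpha$ is a real number, and every rational function is a fraction of two such. Since $V_1, V_2$ are nonempty in our setting (both $S_i$ being semifields over $\boldsymbol{T}$, together with \cite[Lemma~3.6]{JuAe6}), it follows that $[f]_{V_i} = 0$ in the quotient if and only if $f \equiv -\infty$ on $V_i$ if and only if $f = -\infty$ as an element of the semifield, a condition independent of~$i$.

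The main obstacle is surjectivity, which I would handle by lifting through $\psi$. Given any $([f_1]_{V_1}, [f_2]_{V_2})$ in the target $\bowtie$, the same dichotomy combined with the properness of $\operatorname{Ker}(\psi_i)$ (ensured by $S_i$ being a semifield over $\boldsymbol{T}$, as in Remark~\ref{rem1}) yields that $\psi_i(f_i) = 0_{S_i}$ iff $f_i = -\infty$, so the pair $(\psi_1(f_1), \psi_2(f_2))$ already lies in $S_1 \bowtie S_2 = S$. Surjectivity of $\psi$ then produces $f \in \overline{\boldsymbol{T}(\boldsymbol{X})}$ with $\psi(f) = (\psi_1(f_1), \psi_2(f_2))$; projecting via $\pi_i$ gives $\psi_i(f) = \psi_i(f_i)$, and hence $(f, f_i) \in \operatorname{Ker}(\psi_i) \subset \boldsymbol{E}(V_i)$, so $[f]_{V_i} = [f_i]_{V_i}$ for $i = 1, 2$. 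Therefore $\Phi([f]_V) = ([f_1]_{V_1}, [f_2]_{V_2})$, which finishes the proof.
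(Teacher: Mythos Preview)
Your proof follows essentially the same route as the paper's: both construct the diagonal map $[f] \mapsto ([f]_{V_1},[f]_{V_2})$, obtain injectivity from the decomposition $V = V_1 \cup V_2$ (Corollary~\ref{cor:union}/\ref{cor:connected}), and obtain surjectivity by lifting through the surjection $\psi$ onto $S_1 \bowtie S_2$ --- the paper packages this last step as the commutativity of $\phi \circ \theta$ with $(\theta_1,\theta_2)\circ\overline{\pi}$, where $\overline{\pi}$ is an isomorphism and $\theta,\theta_i$ are surjective, but the content is identical to your explicit lift. One caveat: your appeal to \cite[Lemma~3.6]{JuAe6} for $V_i \neq \varnothing$ invokes that lemma in the wrong direction (it states that a nonempty $\boldsymbol{V}(E)$ makes the quotient a semifield over $\boldsymbol{T}$, not the converse); the paper's own proof does not explicitly verify that $\phi$ lands in $\bowtie$ at all, so your dichotomy check is in fact more careful than the original, but the justification for nonemptiness should be revisited.
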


\begin{proof}
The facts $\operatorname{Ker}(\psi) \subset \boldsymbol{E}(\boldsymbol{V}(\operatorname{Ker}(\psi)))$ and $\operatorname{Ker}(\psi_i) \subset \boldsymbol{E}(\boldsymbol{V}(\operatorname{Ker}(\psi_i)))$ induce natural surjective $\boldsymbol{T}$-algebra homomorphsims $\theta \colon \overline{\boldsymbol{T}(\boldsymbol{X})} / \operatorname{Ker}(\psi) \twoheadrightarrow \overline{\boldsymbol{T}(\boldsymbol{X})} / \boldsymbol{E}(\boldsymbol{V}(\operatorname{Ker}(\psi)))$ and $\theta_i \colon \overline{\boldsymbol{T}(\boldsymbol{X})} / \operatorname{Ker}(\psi_i) \twoheadrightarrow \overline{\boldsymbol{T}(\boldsymbol{X})} / \boldsymbol{E}(\boldsymbol{V}(\operatorname{Ker}(\psi_i)))$.
There exists a surjective $\boldsymbol{T}$-algebra homomorphism
\begin{align*}
\phi_i \colon \overline{\boldsymbol{T}(\boldsymbol{X})} / \boldsymbol{E}(\boldsymbol{V}(\operatorname{Ker}(\psi))) \twoheadrightarrow \overline{\boldsymbol{T}(\boldsymbol{X})} / \boldsymbol{E}(\boldsymbol{V}(\operatorname{Ker}(\psi_i)))
\end{align*}
satisfying $\theta_i \circ \overline{\pi_i} = \phi_i \circ \theta$.
In fact, $\theta(\pi_{\operatorname{Ker}(\psi)}(f)) = \theta(\pi_{\operatorname{Ker}(\psi)}(g))$ implies $(f, g) \in \boldsymbol{E}(\boldsymbol{V}(\operatorname{Ker}(\psi)))$.
Hence $f(x) = g(x)$ holds for any $x \in \boldsymbol{V}(\operatorname{Ker}(\psi)) = \bigcup_{i = 1}^2 \boldsymbol{V}(\operatorname{Ker}(\psi_i))$ by Corollary~\ref{cor:union}.
Therefore $(f, g) \in \boldsymbol{E}(\boldsymbol{V}(\operatorname{Ker}(\psi_i)))$, and thus $\theta_i(\overline{\pi_i}(\pi_{\operatorname{Ker}(\psi)}(f))) = \theta_i(\overline{\pi_i}(\pi_{\operatorname{Ker}(\psi)}(g)))$ holds.

Let
\begin{align*}
\phi \colon \overline{\boldsymbol{T}(\boldsymbol{X})} / \boldsymbol{E}(\boldsymbol{V}(\operatorname{Ker}(\psi))) \to \bowtie_{i = 1}^2 \overline{\boldsymbol{T}(\boldsymbol{X})} / \boldsymbol{E}(\boldsymbol{V}(\operatorname{Ker}(\psi_i)))
\end{align*}
be the map defined by the correspondence $[f] \mapsto (\phi_1([f]), \phi_2([f]))$ for any $f \in \overline{\boldsymbol{T}(\boldsymbol{X})}$, where $[f]$ stands for $\theta(\pi_{\operatorname{Ker}(\psi)}(f))$.
As both $\phi_1$ and $\phi_2$ are $\boldsymbol{T}$-algebra homomorphisms, so is $\phi$.
For any $(f, g) \in \overline{\boldsymbol{T}(\boldsymbol{X})}^2$ such that $\phi([f]) = \phi([g])$, by definition, $\phi_1([f]) = \phi_1([g])$ and $\phi_2([f]) = \phi_2([g])$ hold.
Thus for any $x \in \bigcup_{i = 1}^2 \boldsymbol{V}(\operatorname{Ker}(\psi_i)) = \boldsymbol{V}(\operatorname{Ker}(\psi))$, the equality $f(x) = g(x)$ holds.
This means that $(f, g) \in \boldsymbol{E}(\boldsymbol{V}(\operatorname{Ker}(\psi)))$.
Therefore $\phi$ is injective.
Since $S = S_1 \bowtie S_2$ and $\overline{\psi}$ and each $\overline{\psi_i}$ are isomorphisms, $\overline{\boldsymbol{T}(\boldsymbol{X})} / \operatorname{Ker}(\psi)$ is isomorphic to $\bowtie_{i = 1}^2 \overline{\boldsymbol{T}(\boldsymbol{X})} / \operatorname{Ker}(\psi_i)$ as a $\boldsymbol{T}$-algebra with the diagonal semiring homomorphism $\boldsymbol{T} \hookrightarrow \bowtie_{i = 1}^2 \overline{\boldsymbol{T}(\boldsymbol{X})} / \operatorname{Ker}(\psi_i); t \mapsto (t, t)$, and in fact, the map $\overline{\pi} \colon \overline{\boldsymbol{T}(\boldsymbol{X})} / \operatorname{Ker}(\psi) \to \bowtie_{i = 1}^2 \overline{\boldsymbol{T}(\boldsymbol{X})} / \operatorname{Ker}(\psi_i); f \mapsto (\overline{\pi_1}(f), \overline{\pi_2}(f))$ is a $\boldsymbol{T}$-algebra isomorphism.
Hence the surjectivities of $\theta$ and $\theta_i$ ensure that of $\phi$.
\end{proof}

\begin{lemma}
    \label{lem15}
In the above setting, $\operatorname{Ker}(\psi) = \boldsymbol{E}(\boldsymbol{V}(\operatorname{Ker}(\psi)))$ implies $\operatorname{Ker}(\psi_i) = \boldsymbol{E}(\boldsymbol{V}(\operatorname{Ker}(\psi_i)))$ for $i = 1, 2$.
\end{lemma}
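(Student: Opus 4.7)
The plan is to bootstrap off Lemma~\ref{lem14}. Set $V := \boldsymbol{V}(\operatorname{Ker}(\psi))$, $V_i := \boldsymbol{V}(\operatorname{Ker}(\psi_i))$, $F_i := \overline{\boldsymbol{T}(X_1, \ldots, X_n)}/\operatorname{Ker}(\psi_i)$, and $G_i := \overline{\boldsymbol{T}(X_1, \ldots, X_n)}/\boldsymbol{E}(V_i)$. Since $\operatorname{Ker}(\psi_i) \subset \boldsymbol{E}(V_i)$ always holds, the goal is to show that the natural surjection $\theta_i \colon F_i \twoheadrightarrow G_i$ is in fact an isomorphism. The proof of Lemma~\ref{lem14} supplies coordinate-wise $\boldsymbol{T}$-algebra isomorphisms $\overline{\pi} \colon \overline{\boldsymbol{T}(X_1, \ldots, X_n)}/\operatorname{Ker}(\psi) \to \bowtie_{i=1}^2 F_i$ and $\phi \colon \overline{\boldsymbol{T}(X_1, \ldots, X_n)}/\boldsymbol{E}(V) \to \bowtie_{i=1}^2 G_i$, and establishes the identity $\theta_i \circ \overline{\pi_i} = \phi_i \circ \theta$ for each $i$; assembling both coordinates, these combine into the square $\phi \circ \theta = (\bowtie_{i=1}^2 \theta_i) \circ \overline{\pi}$. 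The hypothesis $\operatorname{Ker}(\psi) = \boldsymbol{E}(V)$ makes $\theta$ an isomorphism, and hence $\bowtie_{i=1}^2 \theta_i$ is an isomorphism of pseudodirect products.

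The remaining task is to deduce that each individual $\theta_i$ is an isomorphism. First, observe that because $S = S_1 \bowtie S_2$ is a nontrivial semifield, neither $S_i$ can be trivial (otherwise $S_1 \bowtie S_2$ would collapse to a single-point semiring), so each $F_i \cong S_i$ is a nontrivial semifield. Applying the same observation to the target of $\bowtie_{i=1}^2 \theta_i$ shows that each $G_i$ is nontrivial; in particular $V_i$ is nonempty and $G_i$ is itself a semifield. With every factor a nontrivial semifield, the standard coordinate-splitting argument runs as follows. Suppose $\theta_i(a) = \theta_i(b)$ in $G_i$. If both $a, b$ are nonzero in $F_i$, pair each with any nonzero element $c$ of the other factor $F_j$ (with $j \neq i$) to produce $(a, c), (b, c) \in \bowtie_{k=1}^2 F_k$ with equal images under $\bowtie_{k=1}^2 \theta_k$, so injectivity of the latter forces $a = b$. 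If one of $a, b$ is zero and the other nonzero, then invertibility of the nonzero one combined with multiplicativity of $\theta_i$ would force $1 = 0$ in $G_i$, contradicting nontriviality. Hence each $\theta_i$ is injective and, being manifestly surjective, an isomorphism, giving $\operatorname{Ker}(\psi_i) = \boldsymbol{E}(V_i)$.

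The main obstacle I anticipate is not the pseudodirect product bookkeeping, which is purely formal once the square from Lemma~\ref{lem14} is in place, but verifying that each pseudodirect product factor is a genuine nontrivial semifield. This verification is essential because, unlike for fields, semiring homomorphisms between semifields need not be injective in general (e.g.\ $\boldsymbol{T} \to \boldsymbol{B}$ collapsing every real number to $0$), so the pseudodirect product structure together with invertibility of nonzero elements in each nontrivial semifield is what ultimately forces injectivity of each $\theta_i$.
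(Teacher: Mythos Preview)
Your proof is correct and follows essentially the same route as the paper's own argument: both assemble the commutative square from Lemma~\ref{lem14}, use the hypothesis to make $\theta$ an isomorphism, deduce that $\phi \circ \theta \circ \overline{\pi}^{-1} = \bowtie_{i=1}^2 \theta_i$ is an isomorphism, and then conclude that each $\theta_i$ is an isomorphism. The paper compresses this last step into a single sentence, whereas you spell out the coordinate-splitting argument and the nontriviality check for each factor; your added detail is a genuine improvement in rigor, since (as you correctly observe) injectivity of each $\theta_i$ does not follow formally without knowing the factors are nontrivial semifields.
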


\begin{proof}
With the same notations in the proof of Lemma \ref{lem14}, since $\theta$, $\overline{\pi}$ and $\phi$ are $\boldsymbol{T}$-algebra isomorphisms, so is the composition $\phi \circ \theta \circ \overline{\pi}^{-1}$.
It is easy to check that $\phi \circ \theta \circ \overline{\pi}^{-1}$ coincides with the map $\bowtie_{i = 1}^2 \overline{\boldsymbol{T}(\boldsymbol{X})} / \operatorname{Ker}(\psi_i) \to \bowtie_{i = 1}^2 \overline{\boldsymbol{T}(\boldsymbol{X})} / \boldsymbol{E}(\boldsymbol{V}(\operatorname{Ker}(\psi_i))); (f, g) \mapsto (\theta_1(f), \theta_2(g))$.
This means that both $\theta_1$ and $\theta_2$ are $\boldsymbol{T}$-algebra isomorphisms, which completes the proof.
\end{proof}

\begin{cor}
	\label{cor9}
In the above setting, $\operatorname{Ker}(\psi) = \boldsymbol{E}(\boldsymbol{V}(\operatorname{Ker}(\psi)))$ if and only if $\operatorname{Ker}(\psi_i) = \boldsymbol{E}(\boldsymbol{V}(\operatorname{Ker}(\psi_i)))$ for $i = 1, 2$.
Then $\boldsymbol{V}(\operatorname{Ker}(\psi_i))$ is nonempty and $\boldsymbol{V}(\operatorname{Ker}(\psi)) = \bigsqcup_{i = 1}^2 \boldsymbol{V}(\operatorname{Ker}(\psi_i))$ holds.
\end{cor}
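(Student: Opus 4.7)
The plan is to assemble Corollary~\ref{cor9} by directly invoking the preceding lemmas rather than reproducing any argument. The biconditional splits as follows: Lemma~\ref{lem13} supplies the implication from $\operatorname{Ker}(\psi_i) = \boldsymbol{E}(\boldsymbol{V}(\operatorname{Ker}(\psi_i)))$ for $i = 1, 2$ to $\operatorname{Ker}(\psi) = \boldsymbol{E}(\boldsymbol{V}(\operatorname{Ker}(\psi)))$, and Lemma~\ref{lem15} supplies the converse. Thus the ``if and only if'' part is settled by a two-line citation, without any calculation.

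For the first ``then'' clause, namely the nonemptiness of each $\boldsymbol{V}(\operatorname{Ker}(\psi_i))$, I would argue by contradiction. Suppose $\boldsymbol{V}(\operatorname{Ker}(\psi_i)) = \varnothing$ for some $i$. By the convention recorded just before Lemma~\ref{lem5}, we have $\boldsymbol{E}(\varnothing) = \overline{\boldsymbol{T}(X_1, \ldots, X_n)}^{2}$, so the hypothesis $\operatorname{Ker}(\psi_i) = \boldsymbol{E}(\boldsymbol{V}(\operatorname{Ker}(\psi_i)))$ would force $\operatorname{Ker}(\psi_i)$ to be the improper congruence. Then $\overline{\boldsymbol{T}(X_1, \ldots, X_n)} / \operatorname{Ker}(\psi_i)$ collapses to the one-element semiring $\{ -\infty \}$; since $\overline{\psi_i}$ is a $\boldsymbol{T}$-algebra isomorphism onto this quotient, $S_i$ would also be trivial, which contradicts $0_{S_i} \neq 1_{S_i}$ in the semifield $S_i$ over $\boldsymbol{T}$. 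Hence $\boldsymbol{V}(\operatorname{Ker}(\psi_i))$ is nonempty for each $i$.

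The disjoint-union identity $\boldsymbol{V}(\operatorname{Ker}(\psi)) = \bigsqcup_{i = 1}^{2} \boldsymbol{V}(\operatorname{Ker}(\psi_i))$ is then immediate from Corollary~\ref{cor:connected}, which, in the ambient ``above setting'' shared by both results, states precisely this equality. No substantial technical obstacle is anticipated here: the entire proof is a short synthesis of Lemmas~\ref{lem13} and \ref{lem15}, a one-paragraph contradiction ruling out emptiness, and one appeal to Corollary~\ref{cor:connected}. If any difficulty does arise, it will be the bookkeeping one of making sure that the nonemptiness conclusion is derived from the common hypothesis of the ``then'' clause rather than being implicitly required by it.
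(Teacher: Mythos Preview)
Your proposal is correct and matches the paper's own proof almost verbatim: the biconditional is obtained from Lemmas~\ref{lem13} and \ref{lem15}, nonemptiness of $\boldsymbol{V}(\operatorname{Ker}(\psi_i))$ follows from $\boldsymbol{E}(\varnothing) = \overline{\boldsymbol{T}(\boldsymbol{X})}^2$ together with $S_i$ being a semifield over $\boldsymbol{T}$, and the disjoint-union identity is Corollary~\ref{cor:connected}.
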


\begin{proof}
The first assertion follows Lemmas~\ref{lem13} and \ref{lem15}.
The equality $\boldsymbol{E}(\varnothing) = \overline{\boldsymbol{T}(\boldsymbol{X})}^2$ implies that $\boldsymbol{V}(\operatorname{Ker}(\psi_i))$ is nonempty since $S_i$ is a semifield over $\boldsymbol{T}$ for each $i = 1, 2$.
Hence the conclusion follows Corollary~\ref{cor:connected}.
\end{proof}

Corollary~\ref{cor9} and \cite[Corollary~3.5 and Theorem~1.1]{JuAe7} induce the following corollary:

\begin{cor}
	\label{cor10}
In the above setting, if $\operatorname{Ker}(\psi)$ is finitely generated as a congruence, then so are both $\boldsymbol{E}(\boldsymbol{V}(\operatorname{Ker}(\psi_1)))$ and $\boldsymbol{E}(\boldsymbol{V}(\operatorname{Ker}(\psi_2)))$.
In particular, when $\operatorname{Ker}(\psi) = \boldsymbol{E}(\boldsymbol{V}(\operatorname{Ker}(\psi)))$ holds, $\operatorname{Ker}(\psi)$ is finitely generated as a congruence if and only if so are both $\operatorname{Ker}(\psi_1)$ and $\operatorname{Ker}(\psi_2)$.
\end{cor}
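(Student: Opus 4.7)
The plan is to reduce everything to the two external inputs from \cite{JuAe7}: Corollary~3.5 there says that if a congruence $C$ on $\overline{\boldsymbol{T}(\boldsymbol{X})}$ is finitely generated, then its congruence variety $\boldsymbol{V}(C)$ is a finite union of $\boldsymbol{R}$-rational polyhedral sets, while Theorem~1.1 there says conversely that whenever $V \subset \boldsymbol{R}^n$ is a finite union of $\boldsymbol{R}$-rational polyhedral sets, the congruence $\boldsymbol{E}(V)$ is finitely generated (indeed by a single pair, as used in the proof of Lemma~\ref{lem6}). Together with Corollary~\ref{cor9} and Corollary~\ref{cor:connected}, which give the disjoint decomposition $\boldsymbol{V}(\operatorname{Ker}(\psi)) = \boldsymbol{V}(\operatorname{Ker}(\psi_1)) \sqcup \boldsymbol{V}(\operatorname{Ker}(\psi_2))$, these are exactly the ingredients needed.

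For the first assertion, assume $\operatorname{Ker}(\psi)$ is finitely generated. By Corollary~3.5 of \cite{JuAe7}, there exist $\boldsymbol{R}$-rational polyhedral sets $P_1, \ldots, P_m$ with $\boldsymbol{V}(\operatorname{Ker}(\psi)) = \bigcup_{j=1}^m P_j$. From Corollary~\ref{cor:connected}, $\boldsymbol{V}(\operatorname{Ker}(\psi_1))$ and $\boldsymbol{V}(\operatorname{Ker}(\psi_2))$ are disjoint closed subsets whose union equals $\boldsymbol{V}(\operatorname{Ker}(\psi))$. Each $P_j$ is connected, so it lies entirely in exactly one of the two $\boldsymbol{V}(\operatorname{Ker}(\psi_i))$; consequently each $\boldsymbol{V}(\operatorname{Ker}(\psi_i))$ is itself a finite union of the $P_j$'s, hence a finite union of $\boldsymbol{R}$-rational polyhedral sets. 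Applying Theorem~1.1 of \cite{JuAe7} to each $\boldsymbol{V}(\operatorname{Ker}(\psi_i))$ then yields that $\boldsymbol{E}(\boldsymbol{V}(\operatorname{Ker}(\psi_i)))$ is finitely generated.

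For the ``in particular'' part, suppose $\operatorname{Ker}(\psi) = \boldsymbol{E}(\boldsymbol{V}(\operatorname{Ker}(\psi)))$. By Corollary~\ref{cor9} this is equivalent to $\operatorname{Ker}(\psi_i) = \boldsymbol{E}(\boldsymbol{V}(\operatorname{Ker}(\psi_i)))$ for $i = 1, 2$. The forward direction is the special case of the first assertion just proved. For the converse, assume both $\operatorname{Ker}(\psi_i)$ are finitely generated; then by Corollary~3.5 of \cite{JuAe7} each $\boldsymbol{V}(\operatorname{Ker}(\psi_i))$ is a finite union of $\boldsymbol{R}$-rational polyhedral sets, so by Corollary~\ref{cor:union} the same is true of $\boldsymbol{V}(\operatorname{Ker}(\psi)) = \boldsymbol{V}(\operatorname{Ker}(\psi_1)) \cup \boldsymbol{V}(\operatorname{Ker}(\psi_2))$, and Theorem~1.1 of \cite{JuAe7} furnishes a finite generating set for $\boldsymbol{E}(\boldsymbol{V}(\operatorname{Ker}(\psi))) = \operatorname{Ker}(\psi)$.

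The only non-bookkeeping step is the connectedness argument distributing the $P_j$'s between $\boldsymbol{V}(\operatorname{Ker}(\psi_1))$ and $\boldsymbol{V}(\operatorname{Ker}(\psi_2))$, and this is where the disjointness in Corollary~\ref{cor:connected} is essential; I expect no further obstacle since the remaining content is a direct invocation of the two cited results from \cite{JuAe7}.
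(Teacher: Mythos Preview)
Your argument is correct and follows exactly the route the paper indicates: the paper's proof is a one-line citation of Corollary~\ref{cor9} together with \cite[Corollary~3.5 and Theorem~1.1]{JuAe7}, and you have simply unpacked these references. The one detail you spell out that the paper leaves implicit is the connectedness step showing that each $P_j$, being convex, must lie in a single $\boldsymbol{V}(\operatorname{Ker}(\psi_i))$; this is the right way to verify that the pieces $\boldsymbol{V}(\operatorname{Ker}(\psi_i))$ inherit the structure of a finite union of $\boldsymbol{R}$-rational polyhedral sets before invoking \cite[Theorem~1.1]{JuAe7}.
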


\begin{cor}
	\label{cor11}
Let $T$ be a finitely generated semifield over $\boldsymbol{T}$ and $\varphi \colon \overline{\boldsymbol{T}(Y_1, \ldots, Y_m)} \twoheadrightarrow T$ a surjective $\boldsymbol{T}$-algebra homomorphism.
If $\operatorname{Ker}(\varphi) = \boldsymbol{E}(\boldsymbol{V}(\operatorname{Ker}(\varphi)))$ and it is finitely generated as a congruence, then there exist no infinitely many semifields $T_1, T_2, \ldots$ over $\boldsymbol{T}$ such that $T$ is isomorphic to $\bowtie_{i = 1}^{\infty} T_i$ as a $\boldsymbol{T}$-algebra with the diagonal semiring homomorphism $\boldsymbol{T} \hookrightarrow \bowtie_{i = 1}^{\infty} T_i; t \mapsto (t, t, \ldots)$.
\end{cor}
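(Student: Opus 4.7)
The plan is to argue by contradiction: assume there exist infinitely many semifields $T_1, T_2, \ldots$ over $\boldsymbol{T}$ with $T \cong \bowtie_{i=1}^{\infty} T_i$ as a $\boldsymbol{T}$-algebra, and derive a contradiction from the topology of $\boldsymbol{V}(\operatorname{Ker}(\varphi))$.

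First, for each index $i$, regroup the factors as $T \cong T_i \bowtie T'_i$, where $T'_i \colonequals \bowtie_{j \neq i} T_j$ is again a semifield over $\boldsymbol{T}$ by (the infinitary analogue of) Lemma~\ref{lem7}. Let $\varphi_i \colon \overline{\boldsymbol{T}(Y_1, \ldots, Y_m)} \twoheadrightarrow T_i$ and $\varphi'_i \colon \overline{\boldsymbol{T}(Y_1, \ldots, Y_m)} \twoheadrightarrow T'_i$ be the compositions of $\varphi$ with the respective projections from $T$. Since $T$ is finitely generated as a semifield over $\boldsymbol{T}$, the binary decomposition $T \cong T_i \bowtie T'_i$ puts us in the setting of Corollary~\ref{cor9}; together with the hypothesis $\operatorname{Ker}(\varphi) = \boldsymbol{E}(\boldsymbol{V}(\operatorname{Ker}(\varphi)))$, that corollary yields that $\boldsymbol{V}(\operatorname{Ker}(\varphi_i))$ is nonempty and $\boldsymbol{V}(\operatorname{Ker}(\varphi)) = \boldsymbol{V}(\operatorname{Ker}(\varphi_i)) \sqcup \boldsymbol{V}(\operatorname{Ker}(\varphi'_i))$. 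Since each congruence variety is closed, $\boldsymbol{V}(\operatorname{Ker}(\varphi_i))$ is in particular a nonempty clopen subset of $\boldsymbol{V}(\operatorname{Ker}(\varphi))$.

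Second, I would show that the sets $\boldsymbol{V}(\operatorname{Ker}(\varphi_i))$ are pairwise disjoint by the same device as in Lemma~\ref{lem12}. For $i \neq j$, the element $s \in T$ whose $i$-th component equals the image of $1 \in \boldsymbol{T}$ in $T_i$, whose $j$-th component equals the image of $0 \in \boldsymbol{T}$ in $T_j$, and whose remaining components equal (say) $1_{\boldsymbol{T}}$ in each $T_k$ is well defined as an element of $\bowtie_{k=1}^{\infty} T_k \cong T$ because all of its components are nonzero. Pick $f \in \overline{\boldsymbol{T}(Y_1, \ldots, Y_m)}$ with $\varphi(f) = s$, using surjectivity of $\varphi$; since $\varphi_i$ and $\varphi_j$ are $\boldsymbol{T}$-algebra homomorphisms, this gives $(f, 1) \in \operatorname{Ker}(\varphi_i)$ and $(f, 0) \in \operatorname{Ker}(\varphi_j)$. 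A common point $x \in \boldsymbol{V}(\operatorname{Ker}(\varphi_i)) \cap \boldsymbol{V}(\operatorname{Ker}(\varphi_j))$ would then force $f(x) = 1$ and $f(x) = 0$ simultaneously, a contradiction.

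Combining the previous two steps, I obtain an infinite family of pairwise disjoint nonempty clopen subsets of $\boldsymbol{V}(\operatorname{Ker}(\varphi))$. On the other hand, because $\operatorname{Ker}(\varphi)$ is by hypothesis finitely generated as a congruence, any finite generating set is automatically a finite congruence tropical basis, and thus by \cite[Corollary~3.5]{JuAe7} (as recalled in Remark~\ref{rem5}) the set $\boldsymbol{V}(\operatorname{Ker}(\varphi))$ is a finite union of $\boldsymbol{R}$-rational polyhedral sets. Each such polyhedral set is convex, hence connected, so $\boldsymbol{V}(\operatorname{Ker}(\varphi))$ has only finitely many connected components; this contradicts the existence of infinitely many pairwise disjoint nonempty clopen subsets and completes the proof.

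The main obstacle I anticipate is cleanly carrying out the regrouping $T \cong T_i \bowtie T'_i$ so that Corollary~\ref{cor9} can be invoked as a black box, and in particular verifying that $T'_i$ genuinely inherits the structure of a semifield over $\boldsymbol{T}$ via the diagonal and makes $T$ decompose in the precise form required by the earlier setup. Once that reduction is in place, both the pairwise disjointness argument (a direct adaptation of Lemma~\ref{lem12}) and the final topological counting of connected components are routine.
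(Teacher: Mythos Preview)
Your argument is correct and follows essentially the same line as the paper's proof: reduce the infinite pseudodirect product to binary splittings, invoke Corollary~\ref{cor9} to produce nonempty disjoint closed pieces of $\boldsymbol{V}(\operatorname{Ker}(\varphi))$, and then contradict the fact that this variety is a finite union of $\boldsymbol{R}$-rational polyhedral sets (hence has finitely many connected components). The only organizational difference is that the paper peels off one factor at a time, applying Corollary~\ref{cor9} recursively (the hypothesis $\operatorname{Ker}=\boldsymbol{E}(\boldsymbol{V}(\operatorname{Ker}))$ propagates to each tail by that same corollary) and thereby obtains pairwise disjointness automatically, whereas you split as $T_i\bowtie T_i'$ for each $i$ and supply a separate Lemma~\ref{lem12}--style argument for disjointness; both routes are valid. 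For the finiteness step the paper cites \cite[Theorem~1.1]{JuAe7} rather than \cite[Corollary~3.5]{JuAe7}, but the content you need (that a finitely generated $\operatorname{Ker}(\varphi)$ with $\operatorname{Ker}(\varphi)=\boldsymbol{E}(\boldsymbol{V}(\operatorname{Ker}(\varphi)))$ forces $\boldsymbol{V}(\operatorname{Ker}(\varphi))$ to be a finite union of $\boldsymbol{R}$-rational polyhedra) is the same.
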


\begin{proof}
If there exist infinitely many semifields $T_1, T_2, \ldots, $ over $\boldsymbol{T}$ such that $T \cong \bowtie_{i = 1}^{\infty} T_i$, then there exists a natural $\boldsymbol{T}$-algebra isomorphism $T_1 \bowtie \cdots \bowtie T_i \bowtie \left( \bowtie_{j = i + 1}^{\infty} T_j \right) \cong T_1 \bowtie \cdots \otimes T_{i + 1} \bowtie \left(\bowtie_{j = i + 2}^{\infty} T_j \right)$ fo any $i \ge 1$.
By applying Corollary~\ref{cor9} recursively, the equality $\boldsymbol{E}(\varnothing) = \overline{\boldsymbol{T}(Y_1, \ldots, Y_m)}^2$ and Corollary~\ref{cor:connected} ensure that all $T_j$s define nonempty disjoint congruence varieties whose union is $\boldsymbol{V}(\operatorname{Ker}(\varphi))$.
On the other hand, by \cite[Theorem~1.1]{JuAe7}, it cannot occur.
\end{proof}

\begin{lemma}
    \label{lem16}
Let $T$ be a finitely generated semifield over $\boldsymbol{T}$ and $\varphi$ a surjective $\boldsymbol{T}$-algebra homomorphism from a tropical rational function semifield $\overline{\boldsymbol{T}(Y_1, \ldots, Y_m)}  = \overline{\boldsymbol{T}(\boldsymbol{Y})}$ to $T$.
If $\operatorname{Ker}(\varphi) = \boldsymbol{E}(\boldsymbol{V}(\operatorname{Ker}(\varphi)))$ and $\boldsymbol{V}(\operatorname{Ker}(\varphi))$ is the disjoint union of two nonempty closed subsets $V_1$ and $V_2$ of $\boldsymbol{R}^m$, then $T$ is embedded into $\bowtie_{i = 1}^2 \overline{\boldsymbol{T}(\boldsymbol{Y})} / \boldsymbol{E}(V_i)$ as a $\boldsymbol{T}$-algebra with the diagonal semiring homomorphism $\boldsymbol{T} \hookrightarrow \bowtie_{i = 1}^2 \overline{\boldsymbol{T}(\boldsymbol{Y})} / \boldsymbol{E}(V_i); t \mapsto (t, t)$.
In addition, if $V_1$ and $V_2$ are finite unions of $\boldsymbol{R}$-rational polyhedral sets satisfying condition $(\ast)$ below, $T$ is isomorphic to $\bowtie_{i = 1}^2 \overline{\boldsymbol{T}(\boldsymbol{Y})} / \boldsymbol{E}(V_i)$ as a $\boldsymbol{T}$-algebra with the diagonal semiring homomorphism.

$(\ast)$ For $(i, j) = (1, 2), (2, 1)$, if $e$ is a ray of $V_i$, then there exist no vectors $\boldsymbol{a} \in \boldsymbol{R}^m$ such that $\{ \boldsymbol{x} + \boldsymbol{a} \,|\, \boldsymbol{x} \in e \}$ is a ray of $V_j$.
\end{lemma}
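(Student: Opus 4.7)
The plan is to define a candidate $\boldsymbol{T}$-algebra map $\Phi \colon T \to \bowtie_{i=1}^2 \overline{\boldsymbol{T}(\boldsymbol{Y})}/\boldsymbol{E}(V_i)$ by restriction of rational functions to $V_1$ and $V_2$, verify it is an embedding using the hypothesis $\operatorname{Ker}(\varphi) = \boldsymbol{E}(\boldsymbol{V}(\operatorname{Ker}(\varphi)))$, and, assuming $(\ast)$, show surjectivity by constructing an explicit interpolating rational function via Lemma~\ref{lem6}.

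For the embedding, since $V_i \subset \boldsymbol{V}(\operatorname{Ker}(\varphi))$ the inclusion $\operatorname{Ker}(\varphi) \subset \boldsymbol{E}(V_i)$ holds and induces surjective $\boldsymbol{T}$-algebra homomorphisms $\overline{\pi_i}\colon \overline{\boldsymbol{T}(\boldsymbol{Y})}/\operatorname{Ker}(\varphi) \twoheadrightarrow \overline{\boldsymbol{T}(\boldsymbol{Y})}/\boldsymbol{E}(V_i)$ for $i=1,2$. Composing with the isomorphism $T \cong \overline{\boldsymbol{T}(\boldsymbol{Y})}/\operatorname{Ker}(\varphi)$ yields
\begin{align*}
\Phi\colon T \longrightarrow \bowtie_{i=1}^2 \overline{\boldsymbol{T}(\boldsymbol{Y})}/\boldsymbol{E}(V_i), \qquad \varphi(f) \longmapsto \bigl(\overline{\pi_1}(\varphi(f)), \overline{\pi_2}(\varphi(f))\bigr).
\end{align*}
Its image lies in the pseudodirect product because any $f \in \overline{\boldsymbol{T}(\boldsymbol{Y})} \setminus \{-\infty\}$ is a quotient of nonzero tropical polynomials and therefore takes only finite values on $\boldsymbol{R}^m$; consequently the class of $f$ modulo $\boldsymbol{E}(V_i)$ is the zero class iff $f = -\infty$, and this occurs simultaneously in both factors. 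Injectivity is then immediate from the hypothesis: if $\Phi(\varphi(f)) = \Phi(\varphi(g))$ then $f = g$ pointwise on $V_1 \cup V_2 = \boldsymbol{V}(\operatorname{Ker}(\varphi))$, whence $(f,g) \in \boldsymbol{E}(\boldsymbol{V}(\operatorname{Ker}(\varphi))) = \operatorname{Ker}(\varphi)$, so $\varphi(f) = \varphi(g)$. This establishes the first assertion.

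For the second assertion, assume $(\ast)$ and let $(s_1, s_2)$ be an element of the codomain with lifts $f_1, f_2 \in \overline{\boldsymbol{T}(\boldsymbol{Y})}$. Applying the construction in the proof of Lemma~\ref{lem6} to $V_1$ and $V_2$ separately, I would obtain $F_1, F_2 \in \overline{\boldsymbol{T}(\boldsymbol{Y})}$ with $F_i \ge 0$ on $\boldsymbol{R}^m$ and $F_i(\boldsymbol{x}) = 0$ exactly when $\boldsymbol{x} \in V_i$. For a positive integer $k$, set
\begin{align*}
g_k \colonequals \bigl(f_1 \odot F_1^{\odot(-k)}\bigr) \oplus \bigl(f_2 \odot F_2^{\odot(-k)}\bigr),
\end{align*}
i.e.\ $g_k = \max(f_1 - kF_1,\, f_2 - kF_2)$ in classical notation. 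On $V_1$ the first summand equals $f_1$ (because $F_1 = 0$) and the second equals $f_2 - kF_2$ with $F_2 > 0$; symmetrically on $V_2$. Hence $g_k$ restricts to $f_1$ on $V_1$ and to $f_2$ on $V_2$ as soon as the inequalities $kF_2 > f_2 - f_1$ on $V_1$ and $kF_1 > f_1 - f_2$ on $V_2$ both hold, and in that case $\Phi(\varphi(g_k)) = (s_1, s_2)$.

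The main obstacle is producing a single $k$ for which both inequalities hold uniformly, and this is where $(\ast)$ enters. On each bounded polyhedral cell of $V_1$ the inequality is trivial by compactness and the strict positivity of $F_2$. On an unbounded ray $e$ of $V_1$ with primitive direction $\boldsymbol{v}$, the explicit description of $F_2$ obtained in the proof of Lemma~\ref{lem6} as a minimum of maxes of affine forms shows that $F_2$ is eventually affine along $e$ with slope strictly positive precisely when $\boldsymbol{v}$ is not the direction of any ray of $V_2$; condition $(\ast)$ is exactly this noncoincidence statement, so $F_2$ grows linearly along every unbounded ray of $V_1$ and eventually dominates the piecewise affine $f_2 - f_1$ after scaling by $k$. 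A symmetric argument applies on $V_2$, and finiteness of the polyhedral decompositions makes one $k$ work everywhere. Thus $\varphi(g_k)$ is a preimage of $(s_1, s_2)$ under $\Phi$, proving surjectivity and completing the proof.
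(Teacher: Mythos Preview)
Your proof is correct and, for the embedding, follows the paper's argument verbatim. For surjectivity under $(\ast)$ you take a slightly different route than the paper, so a brief comparison is in order.

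The paper does not build a single interpolant. Instead, using functions $f_i$ from \cite[Lemmas~3.9, 3.10]{JuAe7} with the stronger property $f_i(x)\ge\operatorname{dist}(x,V_i)$, it shows that for every $g$ and large $k$ the element $g\odot f_1^{\odot(-k)}\oplus 0$ maps to $(\pi_{\boldsymbol{E}(V_1)}(g)\oplus 0,\,0)$; applying this to $g^{\odot(-1)}$ and combining yields $(\pi_{\boldsymbol{E}(V_1)}(g),0)$ in the image, and symmetrically $(0,\pi_{\boldsymbol{E}(V_2)}(h))$. Arbitrary pairs are then obtained as tropical products of these ``coordinate'' elements. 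Your direct interpolant $g_k=\max(f_1-kF_1,\,f_2-kF_2)$ reaches the same conclusion in one step, at the cost of checking two uniform inequalities instead of one; the key analytic input---that the indicator-type function for $V_j$ has strictly positive eventual slope along every ray of $V_i$ under $(\ast)$---is identical in both arguments. Your use of the explicit min--max form of $F_i$ from Lemma~\ref{lem6} is a legitimate substitute for the distance bound the paper imports from \cite{JuAe7}: as you note, the eventual slope of $F_j$ along a ray with direction $\boldsymbol{v}$ is positive exactly when $\boldsymbol{v}\notin\operatorname{rec}(V_j)$, and in the one-dimensional situation governing the lemma this is precisely what $(\ast)$ provides.
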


\begin{proof}
The equality $\boldsymbol{V}(\operatorname{Ker}(\varphi)) = \bigsqcup_{i = 1}^2 V_i$ implies $\boldsymbol{E}(\boldsymbol{V}(\operatorname{Ker}(\varphi))) \subset \boldsymbol{E}(V_i)$ for $i = 1, 2$.
As $\operatorname{Ker}(\varphi) = \boldsymbol{E}(\boldsymbol{V}(\operatorname{Ker}(\varphi)))$, there exists a surjective $\boldsymbol{T}$-algebra homomorphism $\gamma_i \colon \overline{\boldsymbol{T}(\boldsymbol{Y})} / \operatorname{Ker}(\varphi) \twoheadrightarrow \overline{\boldsymbol{T}(\boldsymbol{Y})} / \boldsymbol{E}(V_i)$ such that $\gamma_i \circ \pi_{\operatorname{Ker}(\varphi)} = \pi_{\boldsymbol{E}(V_i)}$ holds, where $\pi_{\operatorname{Ker}(\varphi)} \colon \overline{\boldsymbol{T}(\boldsymbol{Y})} \twoheadrightarrow \overline{\boldsymbol{T}(\boldsymbol{Y})} / \operatorname{Ker}(\varphi)$ and $\pi_{\boldsymbol{E}(V_i)} \colon \overline{\boldsymbol{T}(\boldsymbol{Y})} \twoheadrightarrow \overline{\boldsymbol{T}(\boldsymbol{Y})} / \boldsymbol{E}(V_i)$ stand for the natural $\boldsymbol{T}$-algebra homomorphisms, respectively.
Let $\gamma \colon \overline{\boldsymbol{T}(\boldsymbol{Y})} / \operatorname{Ker}(\varphi) \to \bowtie_{i = 1}^2 \overline{\boldsymbol{T}(\boldsymbol{Y})} / \boldsymbol{E}(V_i)$ be the map defined by the correspondence $\pi_{\operatorname{Ker}(\varphi)}(f) \mapsto (\gamma_1(\pi_{\operatorname{Ker}(\varphi)}(f)), \gamma_2(\pi_{\operatorname{Ker}(\varphi)}(f)))$ for any $f \in \overline{\boldsymbol{T}(\boldsymbol{Y})}$.
Since both $\gamma_1$ and $\gamma_2$ are surjective $\boldsymbol{T}$-algebra homomorphisms, so is $\gamma$.
For any $(f, g) \in \overline{\boldsymbol{T}(\boldsymbol{Y})}^2$ such that $\gamma(\pi_{\operatorname{Ker}(\varphi)}(f)) = \gamma(\pi_{\operatorname{Ker}(\varphi)}(g))$, by definition, $\gamma_i(\pi_{\operatorname{Ker}(\varphi)}(f)) = \gamma_i(\pi_{\operatorname{Ker}(\varphi)}(g))$ holds for $i = 1, 2$.
Hence for any $x \in \bigsqcup_{i = 1} V_i = \boldsymbol{V}(\operatorname{Ker}(\varphi))$, the equality $f(x) = g(x)$ holds.
This means that $(f, g) \in \boldsymbol{E}(\boldsymbol{V}(\operatorname{Ker}(\varphi))) = \operatorname{Ker}(\varphi)$ and $\gamma$ is injective.

Assume that $V_1$ and $V_2$ are finite unions of $\boldsymbol{R}$-rational polyhedral sets satisfying condition $(\ast)$.
By \cite[Lemmas~3.9, 3.10]{JuAe7}, for $i = 1, 2$, there exists $f_i \in \overline{\boldsymbol{T}(\boldsymbol{Y})}$ such that $\boldsymbol{V}((f_i, 0)) = V_i$ and $f_i(x) \ge \operatorname{dist}(x, V_i)$ hold for any $x \in \boldsymbol{R}^m$, where $\operatorname{dist}(x, V_i)$ denotes the distance between $x$ and $V_i$.
Thus by condition $(\ast)$, for any $g \in \overline{\boldsymbol{T}(\boldsymbol{Y})} \setminus \{ - \infty \}$, there exists a positive integer $k$ satisfying $g(x) \odot f_1(x)^{\odot (-k)} \le 0$ for any $x \in V_2$.
This means that
\begin{align*}
&~\pi_{\boldsymbol{E}(V_1)}\left( g \odot f_1^{\odot (-k)} \oplus 0 \right)\\
=&~\pi_{\boldsymbol{E}(V_1)}(g) \odot \pi_{\boldsymbol{E}(V_1)}(f_1)^{\odot (-k)} \oplus 0\\
=&~\pi_{\boldsymbol{E}(V_1)}(g) \oplus 0
\end{align*}
and
\begin{align*}
&~\pi_{\boldsymbol{E}(V_2)}\left( g \odot f_1^{\odot (-k)} \oplus 0 \right)\\
=&~\pi_{\boldsymbol{E}(V_2)}(g) \odot \pi_{\boldsymbol{E}(V_2)}(f_1)^{\odot (-k)} \oplus 0\\
=&~0
\end{align*}
hold.
Therefore $\gamma \left( g \odot f_1^{\odot (-k)} \oplus 0 \right)$ is $\left( \pi_{\boldsymbol{E}(V_1)}(g) \oplus 0, 0 \right)$.
By the same argument, it is verified that $\left( \pi_{\boldsymbol{E}(V_1)}(g)^{\odot (-1)} \oplus 0, 0 \right)$, and hence $\left( \pi_{\boldsymbol{E}(V_1)}(g), 0 \right) = \left( \left( \pi_{\boldsymbol{E}(V_1)}(g) \oplus 0 \right) \odot \left( \pi_{\boldsymbol{E}(V_1)}(g)^{\odot (-1)} \oplus 0 \right), 0 \right)$ are in the image of $\gamma$.
The same thing is true for $\left( 0, \pi_{\boldsymbol{E}(V_2)}(g) \right)$.
Since both $\gamma_1$ and $\gamma_2$ are surjective, so is $\gamma$.
\end{proof}

By Corollary~\ref{cor9} and Lemma~\ref{lem16}, condition $(3)$ of Corollary~\ref{cor:curve} is replaced by the condition that there exist no two semifields $S_1$ and $S_2$ over $\boldsymbol{T}$ such that $S$ is isomorphic to $S_1 \bowtie S_2$ as a $\boldsymbol{T}$-algebra with $d$.

Finally, we consider condition $(5)$ of Corollary~\ref{cor:curve}.
To do so, we introduce an invariant:

\begin{dfn}
    \label{dfn:e(S)}
\upshape{
Let $S$ be a finitely generated semifield over $\boldsymbol{T}$ and $\psi : \overline{\boldsymbol{T}(X_1, \ldots, X_n)} \twoheadrightarrow S$ a surjective $\boldsymbol{T}$-algebra homomorphism.
Assume that $\boldsymbol{V}(\operatorname{Ker}(\psi))$ is a finite union of $\boldsymbol{R}$-rational polyhedral sets.
Note here that by \cite[Corollary~3.20]{JuAe6}, if $\psi^{\prime} : \overline{\boldsymbol{T}(Y_1, \ldots, Y_m)} \twoheadrightarrow S$ is another surjective $\boldsymbol{T}$-algebra homomorphism, then $\boldsymbol{V}(\operatorname{Ker}(\psi^{\prime}))$ is also a finite union of $\boldsymbol{R}$-rational polyhedral sets.
We define $e(\psi)$ as the minimum of the numbers of unbounded cells of all $\boldsymbol{R}$-rational polyhedral structures on $\boldsymbol{V}(\operatorname{Ker}(\psi))$, and $e(S)$ as the minimum of $e(\psi)$ for such any $\psi$.
The number $e(S)$ depends only on the $\boldsymbol{T}$-algebra structure of $S$.
}
\end{dfn}

As in the setting of Definition~\ref{dfn:e(S)}, if $\operatorname{Ker}(\psi) = \boldsymbol{E}(\boldsymbol{V}(\operatorname{Ker}(\psi)))$ holds, then since $\boldsymbol{V}(\operatorname{Ker}(\psi))$ is a finite union of $\boldsymbol{R}$-rational polyhedral sets, there exist $f, g \in \overline{\boldsymbol{T}[X_1, \ldots, X_n]}$ such that $\langle (f, g) \rangle_{\overline{\boldsymbol{T}(X_1, \ldots, X_n)}} = \operatorname{Ker}(\psi)$ by \cite[Theorem~1.1 and Proposition~3.4]{JuAe7}.
Let $\overline{\boldsymbol{T}(\boldsymbol{X})}$ stand for $\overline{\boldsymbol{T}(X_1, \ldots, X_n)}$.
By Lemma~\ref{lem2}, the equality $\langle (f_{\boldsymbol{B}}, g_{\boldsymbol{B}}) \rangle_{\overline{\boldsymbol{T}(\boldsymbol{X})}} = \boldsymbol{E}(\boldsymbol{V}((f_{\boldsymbol{B}}, g_{\boldsymbol{B}})))$ holds. 
By Remark~\ref{rem5}, if $\langle (f^{\prime}, g^{\prime}) \rangle_{\overline{\boldsymbol{T}(\boldsymbol{X})}} = \operatorname{Ker}(\psi)$ holds, then $\boldsymbol{V}((f_{\boldsymbol{B}}, g_{\boldsymbol{B}})) = \boldsymbol{V}((f^{\prime}_{\boldsymbol{B}}, g^{\prime}_{\boldsymbol{B}}))$ holds, and thus, the congruence $\langle (f^{\prime}_{\boldsymbol{B}}, g^{\prime}_{\boldsymbol{B}} ) \rangle_{\overline{\boldsymbol{T}(\boldsymbol{X})}}$ coincides with $\boldsymbol{E}(\boldsymbol{V}((f_{\boldsymbol{B}}, g_{\boldsymbol{B}})))$.
Let $\psi_{\boldsymbol{B}}$ denote the natural surjective $\boldsymbol{T}$-algebra homomorphism $\overline{\boldsymbol{T}(\boldsymbol{X})} \twoheadrightarrow \overline{\boldsymbol{T}(\boldsymbol{X})} / \langle (f_{\boldsymbol{B}}, g_{\boldsymbol{B}}) \rangle_{\overline{\boldsymbol{T}(\boldsymbol{X})}}$ and $S_{\boldsymbol{B}}$ (one semifield over $\boldsymbol{T}$ isomorphic to) the quotient semifield $\overline{\boldsymbol{T}(\boldsymbol{X})} / \langle (f_{\boldsymbol{B}}, g_{\boldsymbol{B}}) \rangle_{\overline{\boldsymbol{T}(\boldsymbol{X})}}$ (as a $\boldsymbol{T}$-algebra).
Note that $e(S_{\boldsymbol{B}})$ is also defined for this $S_{\boldsymbol{B}}$ and by Lemma~\ref{lem6} and definition, $e(\psi) \ge e(\psi_{\boldsymbol{B}})$ and $e(S) \ge e(S_{\boldsymbol{B}})$ hold.
By \cite[Proposition~2.4.4]{Giansiracusa=Giansiracusa2}, the composition of $\psi_{\boldsymbol{B}}$ and any $\boldsymbol{T}$-algebra isomorphism $\phi : \overline{\boldsymbol{T}(\boldsymbol{X})} / \langle (f_{\boldsymbol{B}}, g_{\boldsymbol{B}}) \rangle_{\overline{\boldsymbol{T}(\boldsymbol{X})}} \to S_{\boldsymbol{B}}$ has $\operatorname{Ker}(\psi_{\boldsymbol{B}})$ as its kernel congruence.
Thus we have $e(\phi \circ \psi_{\boldsymbol{B}}) = e(\psi_{\boldsymbol{B}})$.

\begin{prop}
	\label{prop5}
In the above setting, additionally if $\operatorname{dim}S$ is one or two and $\boldsymbol{V}(\operatorname{Ker}(\psi))$ is connected, then the following are equivalent:

$(1)$ the equality $e(S) = e(S_{\boldsymbol{B}})$ holds,

$(2)$ the quotient semifield $\overline{\boldsymbol{T}(\boldsymbol{X})} / \operatorname{Ker}(\psi)$ and $\operatorname{Rat}(\boldsymbol{V}(\operatorname{Ker}(\psi))^{\prime})$ are isomorphic as $\boldsymbol{T}$-algebras, where $\boldsymbol{V}(\operatorname{Ker}(\psi))^{\prime}$ stands for the tropical curve obtained from $\boldsymbol{V}(\operatorname{Ker}(\psi))$ by one-point compactifications of rays of $\boldsymbol{V}(\operatorname{Ker}(\psi))$ with lattice length, and

$(3)$ the congruence variety $\boldsymbol{V}(\operatorname{Ker}(\psi))$ has no distinct rays whose primitive direction vectors coincide.
\end{prop}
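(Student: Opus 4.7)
The plan is to split the proof along $(2) \Leftrightarrow (3)$ and $(1) \Leftrightarrow (3)$; once both are established, the three-way equivalence follows. For $(2) \Leftrightarrow (3)$ I will invoke Corollary~\ref{cor:curve} directly. The hypotheses of Proposition~\ref{prop5} supply conditions $(1)$--$(4)$ of that corollary for the given $\psi$: finite generation of $\operatorname{Ker}(\psi)$ follows from $\operatorname{Ker}(\psi) = \boldsymbol{E}(\boldsymbol{V}(\operatorname{Ker}(\psi)))$ together with the finite-generation result \cite[Theorem~1.1]{JuAe7}, since $\boldsymbol{V}(\operatorname{Ker}(\psi))$ is a finite union of $\boldsymbol{R}$-rational polyhedral sets; condition $(2)$ there is direct hypothesis; connectivity is direct hypothesis; and the dimension bound $\operatorname{dim}\boldsymbol{V}(\operatorname{Ker}(\psi)) \in \{0,1\}$ comes from Corollary~\ref{cor8} and $\operatorname{dim}S \in \{1,2\}$. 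Consequently condition $(5)$ of Corollary~\ref{cor:curve}, which is literally our $(3)$, becomes equivalent to the existence of a tropical curve $\Gamma$ with $S \cong \operatorname{Rat}(\Gamma)$; and \cite[Proposition~3.12]{JuAe6} identifies such a $\Gamma$ with $\boldsymbol{V}(\operatorname{Ker}(\psi))^{\prime}$, yielding $(2)$.

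For $(1) \Leftrightarrow (3)$ I will first dispose of the trivial case $\operatorname{dim}S = 1$, in which case $\boldsymbol{V}(\operatorname{Ker}(\psi))$ is a single point by Proposition~\ref{prop1} or direct inspection, its recession fan is $\{\boldsymbol{0}\}$, and both $e$-values vanish. In the remaining case $\operatorname{dim}S = 2$, the unbounded cells of the $1$-dimensional complex $\boldsymbol{V}(\operatorname{Ker}(\psi))$ coincide with its maximal rays; Lemma~\ref{lem6} identifies $\boldsymbol{V}(\operatorname{Ker}(\psi_{\boldsymbol{B}}))$ with the recession fan of $\boldsymbol{V}(\operatorname{Ker}(\psi))$, whose maximal rays biject with the \emph{distinct} primitive directions among the rays of $\boldsymbol{V}(\operatorname{Ker}(\psi))$. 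Hence $e(\psi) = e(\psi_{\boldsymbol{B}})$ holds precisely when $(3)$ holds for $\psi$, and $e(\psi) > e(\psi_{\boldsymbol{B}})$ otherwise.

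To upgrade this single-$\psi$ comparison to the minima $e(S)$ and $e(S_{\boldsymbol{B}})$, I will use that $(3)$ is a $\boldsymbol{T}$-algebra invariant of $S$: by the already-proven $(2) \Leftrightarrow (3)$ together with the ``some (and any)'' clause of Corollary~\ref{cor:curve}, condition $(3)$ either holds for every surjection $\psi^{\prime} : \overline{\boldsymbol{T}(Y_1, \ldots, Y_m)} \twoheadrightarrow S$ or fails for all of them. Combined with \cite[Corollary~3.20]{JuAe6} and \cite[Proposition~3.12]{JuAe6}, the compactifications $\boldsymbol{V}(\operatorname{Ker}(\psi^{\prime}))^{\prime}$ are all isomorphic as tropical curves, so the numbers of rays, and hence the pair of values $e(\psi^{\prime})$ and $e(\psi^{\prime}_{\boldsymbol{B}})$, are uniform in $\psi^{\prime}$. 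This propagates the single-$\psi$ equality (resp.\ strict inequality) to the minima, giving $e(S) = e(S_{\boldsymbol{B}})$ when $(3)$ holds and $e(S) > e(S_{\boldsymbol{B}})$ when $(3)$ fails.

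The main obstacle I expect is precisely the uniformity step in the previous paragraph. In particular, one must verify that the isomorphism class of $S_{\boldsymbol{B}}$ --- \emph{a priori} depending on the choice of generators $(f, g)$ for $\operatorname{Ker}(\psi)$ and on $\psi$ itself --- is stable under the invertible tropical rational maps of \cite[Corollary~3.20]{JuAe6} relating different presentations of $S$, so that each $\psi^{\prime}_{\boldsymbol{B}}$ is a presentation of the same $S_{\boldsymbol{B}}$ up to $\boldsymbol{T}$-algebra isomorphism and the two minima can be compared meaningfully. This reduces to checking that the coefficient-zeroing operation $(f,g) \mapsto (f_{\boldsymbol{B}}, g_{\boldsymbol{B}})$ intertwines with those tropical birational equivalences at the level of recession structures, a technical point that the surrounding lemmas (particularly Lemma~\ref{lem6} and Remark~\ref{rem5}) strongly suggest but do not make completely explicit.
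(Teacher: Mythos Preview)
Your overall strategy matches the paper's: split into $(2)\Leftrightarrow(3)$ and $(1)\Leftrightarrow(3)$, handle the rayless case separately, and for $(1)\Leftrightarrow(3)$ compare the ray count of $\boldsymbol{V}(\operatorname{Ker}(\psi))$ with that of its recession fan via Lemma~\ref{lem6}. The differences are minor. For $(2)\Leftrightarrow(3)$ the paper cites \cite[Proposition~3.18]{JuAe7} directly rather than routing through Corollary~\ref{cor:curve} and \cite[Proposition~3.12]{JuAe6}; your route is slightly longer but valid, and has the advantage of staying inside results already restated in the present paper. For $(1)\Leftrightarrow(3)$ the paper is much terser than you: it simply asserts, citing Lemma~\ref{lem6}, that $e(S)$ equals the number of rays of $\boldsymbol{V}((f,g))$ and $e(S_{\boldsymbol{B}})$ equals the number of rays of $\boldsymbol{V}((f_{\boldsymbol{B}},g_{\boldsymbol{B}}))$, and concludes. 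In particular, the paper does \emph{not} address the uniformity-in-$\psi$ issue you flag in your final paragraph; your argument via the ``some (and any)'' clause of Corollary~\ref{cor:curve} plus the integer-valued strict inequality $e(\psi')\ge e(\psi'_{\boldsymbol{B}})+1$ when $(3)$ fails is already more careful than what the paper writes. The residual worry you raise---whether $S_{\boldsymbol{B}}$ is independent of the presentation $\psi$---is a genuine subtlety that the paper's proof leaves implicit as well; it is not a defect peculiar to your argument.
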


\begin{proof}
If $\boldsymbol{V}(\operatorname{Ker}(\psi))$ has no rays, then the assertion is clear by \cite[Corollary~3.20]{JuAe6}.
Assume that $\boldsymbol{V}(\operatorname{Ker}(\psi))$ has at least one ray.
By \cite[Proposition~3.18]{JuAe7}, $(2)$ and $(3)$ are equivalent to each other.
Since $e(S)$ (resp.~$e(S_{\boldsymbol{B}})$) equals the number of rays of $\boldsymbol{V}((f, g))$ (resp.~$\boldsymbol{V}((f_{\boldsymbol{B}}, g_{\boldsymbol{B}}))$) by Lemma~\ref{lem6}, if $(3)$ holds, then $(1)$ holds, and vice versa.
\end{proof}

In conclusion, condition $(5)$ of Corollary~\ref{cor:curve} is equivalent to condition $(1)$ of Proposition~\ref{prop5}.

\end{document}